\setlist[enumerate,1]{label={(\alph*)}}
\newcommand{\Z}{\mathbb{Z}}
\newcommand{\C}{\mathbb{C}}
\newcommand{\Q}{\mathbb{Q}}
\newcommand{\Qp}{{\mathbb{Q}_p}}
\newcommand{\Qpf}{{\mathbb{Q}_{p^f}}}
\newcommand{\Zp}{{\mathbb{Z}_p}}
\newcommand{\Fpf}{{\mathbb{F}_{p^f}}}
\newcommand{\B}{\mathbf{B}}
\newcommand{\D}{\mathbf{D}}
\newcommand{\V}{\mathbf{V}}
\newcommand{\F}{\mathbb{F}}
\newcommand{\ident}{\mathbb{I}}
\newcommand{\Ms}{\mathcal{M}}
\newcommand{\Mf}{\mathfrak{M}}
\newcommand{\M}{\mathbf{M}}
\newcommand{\pf}{\mathfrak{p}}
\newcommand{\pu}{[\![u]\!]}
\newcommand{\eval}{\text{\normalfont eval}}
\newcommand{\Kis}{\text{\normalfont Kis}}
\newcommand{\Bre}{\text{\normalfont Bre}}
\newcommand{\Rep}{\text{\normalfont Rep}}
\newcommand{\ind}{\text{\normalfont Ind}}
\newcommand{\semi}{\text{\normalfont ss}}
\newcommand{\et}{\text{\normalfont \'{e}t}}
\newcommand{\Fil}{\text{\normalfont Fil}}
\newcommand{\I}{\text{\normalfont I}}
\newcommand{\II}{\text{\normalfont II}}
\newcommand{\Rank}{\text{\normalfont Rank}}
\newcommand{\Diag}{\text{\normalfont Diag}}
\newcommand{\Gal}{\text{\normalfont Gal}}
\newcommand{\GL}{\text{\normalfont GL}}
\newcommand{\SL}{\text{\normalfont SL}}
\newcommand{\Hom}{\text{\normalfont Hom}}
\newcommand{\HP}{\text{\normalfont HP}}
\newcommand{\Mod}{\text{\normalfont Mod}}
\newcommand{\MF}{\text{\normalfont MF}}
\newcommand{\SD}{\text{\normalfont SD}}
\newcommand{\Mat}{\text{\normalfont Mat}}
\newcommand{\cris}{\text{\normalfont cris}}
\newcommand{\st}{\text{\normalfont st}}
\newcommand{\BK}{\text{\normalfont BK}}
\newcommand{\res}{\text{\normalfont res}}
\newcommand{\oh}{\mathscr O}
\newcommand{\Para}{\mathscr{P}}
\newcommand{\Ts}{\mathcal T}
\newcommand{\Ss}{\mathcal S}
\newcommand{\Sig}{\mathfrak{S}}
\newcommand{\idem}{\mathfrak{e}}
\newcommand{\kvec}{\mathbf{k}}
\newcommand{\As}{\mathcal{A}}
\newcommand{\Af}{\mathfrak{A}}
\newcommand{\Bs}{\mathcal{B}}
\newcommand{\Is}{\mathcal{I}}
\newcommand{\Ps}{\mathcal{P}}
\newcommand{\Os}{\mathcal{O}}
\newcommand{\Ds}{\mathcal{D}}
\newtheoremstyle{italics}{}{}{\itshape}{}{\bfseries}{:}{ }{}
\theoremstyle{italics}
\newtheorem{thm}[subsubsection]{Theorem}
\newtheorem{lem}[subsubsection]{Lemma}
\newtheorem{prop}[subsubsection]{Proposition}
\newtheoremstyle{noitalics}{}{}{}{}{\bfseries}{:}{ }{}
\theoremstyle{noitalics}
\newtheorem{rem}[subsubsection]{Remark}
\newtheorem{defin}[subsubsection]{Definition}
\newtheorem{assump}[subsubsection]{Assumption}
\newtheorem*{question}{Question}
\theoremstyle{italics}
\newtheorem{thmx}{Theorem}
\title{Reductions of Some Crystalline Representations in the Unramified Setting}
\author{Anthony Guzman}
\address{Department of Mathematics, University of Arizona, Tucson, AZ 85721 USA}
\email{awguzman@arizona.edu}
\date{\today}
\subjclass[2020]{Primary 11F80 (11F85)}
\keywords{Representation theory, $p$-adic Hodge Theory, Kisin modules}
\begin{document}
	
\begin{abstract}
	We determine semisimple reductions of irreducible, 2-dimensional crystalline representations of the absolute Galois group $\Gal(\overline{\Q}_p/\Qpf)$. To this end, we provide explicit representatives for the isomorphism classes of the associated weakly admissible filtered $\varphi$-modules by concretely describing the strongly divisible lattices which characterize the structure of the aforementioned modules. Using these representatives, we construct Kisin modules canonically associated to Galois stable lattice representations inside our crystalline representations. This allows us to compute the reduction of such crystalline representations for arbitrary labeled Hodge-Tate weights so long as the $p$-adic valuations of certain parameters are sufficiently large. Hence, we provide a Berger-Li-Zhu type bound in the unramified setting.
\end{abstract}
	
\maketitle

\setcounter{tocdepth}{1}
\tableofcontents

\section{Introduction}\label{intro-sec}

\subsection{Motivation}
Let $g=\sum_{n\ge1} a_nq^n$ be a weight $k\ge2$ normalized cuspidal eigenform for $\Gamma(N)\subseteq\SL_2(\Z)$ for some level $N\ge1$. The work of Deligne \cite{De71} allows us to attach to $g$, a continuous, two-dimensional $p$-adic representation \[\rho_g:G_\Q\rightarrow\GL_2(\overline{\Q}_p)\] of $G_\Q=\Gal(\overline{\Q}/\Q)$ for any prime $p$. By fixing an embedding of algebraic closures $\overline{\Q}\hookrightarrow\overline{\Q}_p$, we may choose a place of $\overline{\Q}$ above $p$ for which the decomposition group $D_p$ at this place is isomorphic to the local absolute Galois group $G_{\Qp}=\Gal(\overline{\Q}_p/\Qp)$. By restricting $\rho_g$ to $D_p$, we give rise to a local representation of $G_\Qp$, \[\rho_{g,p}:G_{\Qp}\rightarrow\GL_2(\overline{\Q}_p).\] 
If we assume that $\nu_p(a_p)>0$ with $a_p^2\neq 4p^{k-1}$ then $\rho_{g,p}$ is irreducible and crystalline with Hodge-Tate weights $\{0,k-1\}$ by the works of Faltings and Scholl in \cite{Fal89} and \cite{Sch90}. In \cite{Br03}, Breuil shows that the data of the Hecke eigenvalue $a_p$ and the weight $k$ suffices to completely determine the associated filtered $\varphi$-module $\D_\cris^*(\rho_{g,p})=D_{k,a_p}$ up to a twist by an unramified character. In particular, by choosing a suitable basis $\{\eta_1,\eta_2\}$, one is able to describe the Frobenius and filtration structures on $D_{k,a_p}$ by
\begin{align}\label{Qp-class-equ}
	[\varphi]_\eta&=\begin{pmatrix}
		0 & -1 \\ p^{k-1} & a_p
	\end{pmatrix}&\Fil^j D_{k,a_p}&=\begin{dcases}
		D_{k,a_p} & 0\ge j \\
		\Q_p (\eta_1) & 0<j\le k-1 \\
		0 & k-1< j.
	\end{dcases}
\end{align}

\noindent In fact, every irreducible, two-dimensional crystalline $\Qp$-representation  of $G_\Qp$ takes the form $V_{k,a_p}\coloneqq \V_\cris^*(D_{k,a_p})$, up to the aforementioned twist, for some $a_p$ with $\nu_p(a_p)>0$ and Hodge-Tate weight $k\ge 2$ so that we have an isomorphism $\rho_{g,p}\cong V_{k,a_p}$.

The above discussion lends merit to the idea that the local study of $p$-adic Galois representations may help to provide answers to global questions regarding modular forms. In particular, the residual representations $\overline{\rho}_{g,p}:G_{\Qp}\rightarrow\GL_2(\overline{\F}_p)$ have proven to be especially fruitful, having played an important part in the proof of Serre's modularity conjectures by Khare and Wintenberger in \cite{KW09a, KW09b}. However, our interest is in the following purely local question:

\begin{question}
	What is the isomorphism class of $\overline{V}_{k,a_p}$ as an explicit function of the parameters $k$ and $a_p$?
\end{question}

\noindent The study of this question has proven to be far more complicated than in characteristic zero setting. The essential problem is that in order to determine $\overline{V}_{k,a_p}$, one must first describe an integral structure associated to $D_{k,a_p}$. Let us recall one pursuit toward providing answers to this question when the $p$-adic valuation of $a_p$ is large\footnote{For a summary of reductions when the valuation of $a_p$ is small, see \cite[Thm 5.2.1]{Ber10}.}.

\begin{thm}\label{Qp-large-val-red-thm}
	The isomorphism $\overline{V}_{k,a_p}\cong \overline{V}_{k,0}$ is known in the following increasingly inclusive cases:
	\begin{enumerate}
		\item (Berger-Li-Zhu, \cite{BLZ04}) Whenever $\nu_p(a_p)>\lfloor\frac{k-2}{p-1}\rfloor$;
		\item (Bergdall-Levin, \cite{BL20}) Whenever $\nu_p(a_p)>\lfloor\frac{k-1}{p}\rfloor$ for $p<2$;
		\item (Arsovski, \cite{Ars21}) Whenever $\nu_p(a_p)>\lfloor \frac{k-1}{p+1}\rfloor+\lfloor\log_p(k-1)\rfloor$ for $p>3$ and $p+1\nmid k-1$.
	\end{enumerate}
\end{thm}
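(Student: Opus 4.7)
The plan is to construct an explicit strongly divisible lattice (equivalently, a Kisin module) inside $D_{k,a_p}$, reduce it modulo $p$, and invoke the Breuil-Kisin equivalence of categories to recover the semisimple reduction. The three parts of the theorem correspond to successively cleverer ways of exploiting that large $\nu_p(a_p)$ simplifies the Frobenius mod $p$; I will focus on (a) and treat (b), (c) as refinements requiring different technology.

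For part (a), I would follow the Berger-Li-Zhu strategy. Working inside $S \otimes_{\Zp} D_{k,a_p}$, where $S$ is Breuil's divided-power ring, I would build a strongly divisible lattice $\Ms$ free over $S$ on a basis derived from $\{\eta_1,\eta_2\}$ and equipped with a filtration $\Fil^{k-1}\Ms$ adapted to the weakly admissible filtration on $D_{k,a_p}$. The natural candidate for $\Fil^{k-1}\Ms$ is built from $E(u)^{k-1}\eta_1$ (with $E(u)=u-p$) together with an element designed to capture the Frobenius relation $\varphi(\eta_1)=p^{k-1}\eta_2$, so that $\varphi_{k-1}\coloneqq \varphi/p^{k-1}$ sends a set of generators of $\Fil^{k-1}\Ms$ onto a basis of $\Ms$.

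The heart of the argument is then a mod-$p$ computation. In a basis for $\Ms$ adapted to the filtration, the matrix of $\varphi$ has entries that are polynomials in $u$ whose coefficients are polynomial expressions in $a_p$. The bound $\nu_p(a_p) > \lfloor (k-2)/(p-1)\rfloor$ is calibrated precisely so that every term involving $a_p$ is divisible by $p$; consequently $\varphi \bmod p$ agrees with the corresponding Frobenius on the strongly divisible lattice associated to $D_{k,0}$. The mod-$p$ Breuil modules thereby coincide, and applying the contravariant functor to Galois representations yields $\overline{V}_{k,a_p} \cong \overline{V}_{k,0}$.

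The main obstacle is strong divisibility itself: verifying $\varphi(\Fil^{k-1}\Ms) \subseteq p^{k-1}\Ms$ and that $\varphi_{k-1}$ generates $\Ms$ forces a careful tracking of $(u-p)^{k-1}$ through the Frobenius, and the valuation bound in (a) emerges from the $p$-adic sizes of the binomial coefficients $\binom{k-1}{j}$ together with the divided-power structure on $S$. Parts (b) and (c) genuinely require different tools: Bergdall-Levin work directly with Kisin modules over $\Zp[\![u]\!]$ and exploit refined Newton polygon data to lower the bound, while Arsovski combines this with a delicate combinatorial analysis of Wach modules to obtain the sharpest known bound. I would treat (b) and (c) as black-box inputs here, since their arguments diverge substantially from the strongly-divisible-lattice method that the present paper generalizes.
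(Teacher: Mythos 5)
This theorem is not proved in the paper at all: it is a survey statement in the introduction whose ``proof'' consists entirely of the citations to \cite{BLZ04}, \cite{BL20}, and \cite{Ars21}, each of which is a full research paper. So there is no argument in the text to compare yours against, and treating (b) and (c) as black boxes is exactly what the paper does. Your outline of (a) is broadly faithful in spirit to the Berger--Li--Zhu strategy (build an explicit integral structure, show the bound on $\nu_p(a_p)$ forces the Frobenius matrix to agree with the $a_p=0$ case modulo $p$, then apply the equivalence with mod-$p$ Galois representations), but it misidentifies the integral category: Berger--Li--Zhu work with explicit Wach modules over $\Z_p[\![\pi]\!]$, not strongly divisible lattices over Breuil's divided-power ring $S$, and the bound $\lfloor (k-2)/(p-1)\rfloor$ arises from the integrality and convergence of their Wach-module construction rather than from divided powers of $E(u)^{k-1}$. (The strongly-divisible/Kisin-module route you describe is closer to what the present paper generalizes in its later sections, and to Bergdall--Levin, whose improved bound $\lfloor(k-1)/p\rfloor$ comes precisely from switching to Kisin modules.) As written, your sketch of (a) is an expository gloss rather than a proof: the key claims --- that strong divisibility holds and that every $a_p$-term in the reduced Frobenius is divisible by $p$ exactly when $\nu_p(a_p)>\lfloor(k-2)/(p-1)\rfloor$ --- are asserted, not verified. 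For a citation theorem of this kind that is acceptable, but one should be clear that the actual content lives in the referenced works.
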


\noindent Hence, the isomorphism class of $\overline{V}_{k,0}$ is explicitly determined by the weight $k$\footnote{Theorem \ref{Qp-large-val-red-thm} may be thought of as a collection of results toward the conjectured bound of $\nu_p(a_p)>\lfloor \frac{k-1}{p+1}\rfloor$ inspired by global considerations about modular forms of Gouv\^{e}a in \cite{Gou01}}.  

In this article, we aim to provide an analogous bound in the case that modular forms are replaced with Hilbert modular forms. The key difference at this generality on the representation theoretic side is the replacement of the base field $\Qp$ with the unramified extension $\Qpf$ where $f$ denotes the degree of the totally real number field for which the Hilbert modular form is defined over. For details on the global side of things, see the introduction of \cite{Dou10}.

The study of reductions of such representations is far less advanced than in the $G_\Qp$ case detailed in \ref{Qp-large-val-red-thm}. The initial obstruction is that we lose the nice classification of filtered $\varphi$-modules $\D_\cris^*(\rho_{g,\pf})$ as we had over $\Qp$ in \ref{Qp-class-equ}. Indeed, the data of the Hecke eigenvalue of $g$ at $\pf$ and the weights no longer suffice to describe the structure of $\D_\cris^*(\rho_{g,\pf})$. In essence, there are not enough invariants provided by the Hilbert modular form to concretely determine the associated weakly admissible filtered $\varphi$-module. In order to compute these reductions, we must first understand the required parameters by formulating models for (the isomorphism classes of) weakly admissible filtered $\varphi$-modules corresponding to irreducible representations.

Some partial results are known however. Notably, the work of Dousmanis in \cite{Dou09} gives such a description of filtered $\varphi$-modules which is used in \cite{Dou10} and \cite{Dou13} to generalize the results of \cite{BLZ04} and \cite{Ber11} to compute reductions for certain (infinite) families of crystalline representations whose Frobenius action `looks like' the Frobenius matrix in \ref{Qp-class-equ}. Our results can be viewed as an improvement of these works.

\subsection{Notation and Conventions}\label{notation-sec}

We fix the following notation for the remainder of this article. Let $p\ge2$ be a prime and equip $\Qp$ with a $p$-adic valuation $\nu_p$ normalized so that $\nu_p(p)=1$.\footnote{We exclude $p=2$ in the second part for technical reasons. See the Descent Algorithm \ref{red-alg-prop}.} Upon fixing an algebraic closure $\overline{\Q}_p$, we denote the $p$-adic completion of $\overline{\Q}_p$ by $\C_p$. 

Let $K$ be the unique unramified extension of $\Qp$ with (inertial) degree $f\ge1$; that is, we have an isomorphism $K\cong \Qpf$. Denote its ring of integers by $\oh_K$ with uniformizer $\pi_K$ and its residue field $k=\oh_K/\pi_K\oh_K\cong\Fpf$. Let $\sigma_K$ denote the absolute Frobenius on $K$ induced from the natural Frobenius on the residue field $k$. We define the absolute Galois group of $K$ to be $G_K=\Gal(\overline{K}/K)$ and we will let $I_K$ denote the inertia subgroup of $G_K$.

\begin{rem}\label{K=K0-rem}
	For a general finite extension $K/\Qp$, we define $K_0=W(k)[1/p]$ where $W(k)$ is the ring of Witt vectors over the (perfect) residue field $k$. It follows that $K_0/\Q_p$ is the maximal unramified extension of $\Q_p$ contained in $K$. Since we are taking $K$ to be unramified, then $K_0=K$ and $\oh_K=W(k)$; however, the field $K_0$ occurs naturally in much of $p$-adic Hodge theory so we will sometimes write $K_0$ when it is relevant to the theory with the understanding that we are actually talking about $K$. We hope this causes no confusion.
\end{rem}

Fix a uniformizer $\pi_K=-p$ of $K$ so that the Eisenstein polynomial of $\pi_K$ in the formal variable $u$ may be denoted $E\coloneqq E(u)=u+p\in\oh_K[u]$. We fix once and for all a $p$-power compatible sequence $\pi=(\pi_0,\pi_1,\pi_2,\dots)$ in $\overline{K}$ such that $\pi_0=\pi_K$ and $\pi_n^p=\pi_{n-1}$. We define the field $K_\infty$ to be the compositum $K_\infty=\cup_nK(\pi_n)$ contained in $\overline{K}$ and the absolute Galois group of $K_\infty$ is denoted $G_\infty\coloneqq\Gal(\overline{K}/K_\infty)$. 

Let $\Lambda\subset K_0\pu$ denote the ring of rigid analytic functions on the open $p$-adic unit disc in $K$ and set $\Sig\coloneqq W(k)\pu\subset \Lambda$. The ring $K_0\pu$ admits a Frobenius action $\varphi$ which acts on coefficients by the absolute Frobenius $\sigma_K$ on $K$ and acts on the formal variable $u$ by $\varphi(u)=u^p$. Observe that the rings $\Sig\subset\Lambda\subset K_0\pu$ are $\varphi$-stable under this definition.

Acting as linear coefficients, let $F$ be a finite extension of $\Qp$ taken large enough to admit an embedding $\tau_0:K\hookrightarrow F$. Fix a uniformizer $\varpi$ of $F$ and define its ring of integers by $\oh_F$ with residue field $k_F\coloneqq\oh_F/\varpi\oh_F$. Extending the scalars of our rigid analytic functions, let $\Sig_F\coloneqq \Sig\otimes_{\Zp}\oh_F$ and $\Lambda_F\coloneqq \Lambda\otimes_{\Q_p}F$. Extending the $\varphi$-action on $K_0\pu$ by $F$-linearity, we obtain $\varphi$-stable rings $\Sig_F\subset\Lambda_F\subset (F\otimes_{\Q_p}K_0)\pu$. 

\subsection{Main Results}\label{overview-sec}

Suppose $V$ is a $d$-dimensional crystalline $F$-representation of $G_K$. In Section \ref{prelim-chap} we will recall how to naturally attach to $V$, a unique weakly admissible filtered $\varphi$-module $D\coloneqq\D_\cris^*(V)$ over $F\otimes_{\Q_p}K$ and how this attachment gives us a natural correspondence between the isomorphism classes of such objects. Since $K$ is an unramified extension of $\Qp$ of degree $f>1$, the structure of $D$ over the tensor $F\otimes_{\Q_p}K$ is complicated and hence, so are the isomorphism classes. To get a hold of this structure, we utilize the $f$-embeddings of $K\hookrightarrow F$ to decompose $D=\prod D^{(i)}$ into pieces, each being viewed as filtered $\varphi$-modules over $F$, so that we can write down explicit partial Frobenius matrices and filtration structures on each individual piece. In essence, we choose to trade a mysterious single $D$ for $f$-many $D^{(i)}$, each of which we aim to understand completely.

The weak admissibility property on our filtered $\varphi$-modules can be used to further restrict the structure on $D$ and hence, on each $D^{(i)}$. Indeed, we utilize the results of \cite{Zhu09} in Section \ref{SDL-chap} to prove the existence of a strongly divisible lattice $L$ inside $D$ and detail how the isomorphism classes of strongly divisible lattices determine the structure of weakly admissible filtered $\varphi$-modules. This strongly divisible structure will descend along the decomposition $D=\prod D^{(i)}$ so that we may write $L=\prod L^{(i)}$ where each $L^{(i)}$ may be viewed as a strongly divisible lattice inside $D^{(i)}$.

We then choose a basis so that the filtration structure is fixed for our strongly divisible lattice $L$, and hence it will be fixed on $D$. In this basis, the structure of $L$ is completely determined by a $f$-tuple of matrices $(A)=(A^{(0)},A^{(1)},\dots, A^{(f-1)})\in\GL_d(\oh_F)^f$ in that we write $L=L(A)=\prod L(A)^{(i)}$ where the partial Frobenius matrix on $L(A)^{(i)}$ in this chosen basis is determined by $A^{(i)}\in\GL_2(\oh_F)$. The result is that the isomorphism classes of strongly divisible lattices are determined by the $f$-tuple $(A^{(i)})\in\GL_d(\oh_F)^f$. Moreover, we give a concrete method to describe such isomorphism classes in terms of parabolic equivalence classes on $\GL_d(\oh_F)^f$ via a bijection $\Theta:[(A)]\leftrightarrow L(A)$. 

In Section \ref{models-irred-reps-chap} we use, in the dimension two case, a matrix simplifying algorithm inspired by similar techniques in \cite{Liu21} to find a `nice' representative of $[(A)]$ in $\GL_2(\oh_F)^f$ so that each $A^{(i)}$ takes the form of one of two matrix \textit{Types}:
\begin{itemize}
	\item Type $\I$:	$A^{(i)}=\begin{pmatrix}0 & a_{1}^{(i)} \\ 1 & a_{2}^{(i)} \end{pmatrix}$ where $a_{1}^{(i)}\in\oh_F^*$ and $a_{2}^{(i)}\in\oh_F$.
	\item Type $\II$:	$A^{(i)}=\begin{pmatrix}a_{1}^{(i)} & 0 \\ a_{2}^{(i)} & 1\end{pmatrix}$ where $a_{1}^{(i)}\in\oh_F^*$ and $a_{2}^{(i)}\in\varpi\oh_F$.
\end{itemize}

\noindent Since the equivalence class $[(A)]$ determines an isomorphism class of strongly divisible lattices which in turn dictate the structure of weakly admissible filtered $\varphi$-modules, then $D(A)\coloneqq L(A)\otimes_{\oh_F}F$ will be a `simple' representative of its isomorphism class. We write $V(A)$ to be the crystalline representation whose image under $\D_\cris^*$ is $D(A)$. The first of our main results is as follows:

\begin{thmx}[Theorem \ref{model-irred-cris-reps-thm}]
	Let $V\in\Rep_{\cris/F}^{\kvec}(G_K)$ be a two dimensional irreducible crystalline $F$-representation of $G_K$ with labeled Hodge-Tate weights $\kvec=(0,k_i)_{i\in\Z/f\Z}$ where $k_i>0$. Then $V\cong V(A)$ where $D(A)=\D^*_\cris(V(A))$ and there exists a basis $\{\eta_1^{(i)},\eta_2^{(i)}\}$ of $D(A)=\prod D(A)^{(i)}$ such that
	\begin{align*}
		[\varphi^{(i)}]_\eta &=\begin{dcases}
			\begin{pmatrix}
				0 & a_1^{(i)} \\ p^{k_{i-1}} & a_2^{(i)}
			\end{pmatrix} & \text{\normalfont Type $\I$} \\
			\begin{pmatrix}
				a_1^{(i)}p^{k_{i-1}} & 0 \\ a_2^{(i)}p^{k_{i-1}} & 1
			\end{pmatrix} & \text{\normalfont Type $\II$}
		\end{dcases} & \Fil^j D(A)^{(i)}&=\begin{dcases}
		D(A)^{(i)} & j\le 0 \\
		F(\eta_1^{(i)}) &  0<j\le k_i \\
		0 & k_i<j
		\end{dcases}
	\end{align*}
\end{thmx}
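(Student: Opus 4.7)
The plan is to translate the statement about crystalline representations into one about parabolic equivalence classes of matrix tuples, and then execute a normalization procedure inside each class. First, I would apply $\D_\cris^*$ to pass from $V$ to the weakly admissible filtered $\varphi$-module $D = \D_\cris^*(V)$ over $F \otimes_{\Qp} K$, and decompose $D = \prod_{i \in \Z/f\Z} D^{(i)}$ using the $f$ embeddings $\tau_i : K \hookrightarrow F$. The results of Section \ref{SDL-chap}, together with the bijection $\Theta$, furnish a strongly divisible $\oh_F$-lattice $L \subset D$ which decomposes as $L = \prod L^{(i)}$, and the isomorphism class of $D$ is encoded in the parabolic equivalence class of an $f$-tuple $(A) \in \GL_2(\oh_F)^f$.

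Next, since each $D^{(i)}$ is two-dimensional with a single positive filtration jump of length one, I can select a basis $\{\eta_1^{(i)}, \eta_2^{(i)}\}$ of each $D^{(i)}$ so that $\Fil^{k_i} D^{(i)} = F(\eta_1^{(i)})$, realizing the prescribed filtration on the nose. In this basis, the strong divisibility condition forces $\varphi^{(i)}(\eta_1^{(i-1)}) \in p^{k_{i-1}} L^{(i)}$, so each partial Frobenius admits a factorization $\varphi^{(i)} = A^{(i)} \cdot \Diag(p^{k_{i-1}}, 1)$ with $A^{(i)} \in \GL_2(\oh_F)$, producing the tuple $(A)$. The parabolic equivalence on $(A)$ arises precisely from the remaining freedom of choosing bases respecting the filtration, namely base changes by upper-triangular matrices with unit diagonal, which cycle across the indices as $P^{(i)} A^{(i)} (P^{(i-1)})^{-1}$.

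The heart of the argument is then to show that within $[(A)]$ one may always pick a representative in which each $A^{(i)}$ sits in Type $\I$ or Type $\II$ normal form. Following an iterative matrix simplification procedure in the spirit of \cite{Liu21}, at each index $i$ I would apply a carefully chosen upper-triangular base change in $\GL_2(\oh_F)$ to zero out either the top-left or the top-right entry of $A^{(i)}$. The dichotomy is governed by whether the bottom-left entry of $A^{(i)}$ is a unit (yielding Type $\I$) or not (yielding Type $\II$, after rescaling the second row to make the bottom-right entry equal to $1$); the unit determinant condition then forces $a_1^{(i)} \in \oh_F^*$, while the non-unit assumption on the bottom-left entry in Type $\II$ is exactly what produces $a_2^{(i)} \in \varpi \oh_F$. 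Because the parabolic action couples base changes cyclically across the indices $i \in \Z/f\Z$, the simplifications at different $i$ must be performed consistently, one index at a time while tracking the distortion induced at the next.

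The principal obstacle I anticipate is verifying that this iterative descent actually terminates and remains inside $\GL_2(\oh_F)^f$ rather than drifting into $\GL_2(F)^f$, especially around the cycle $\Z/f\Z$ where corrections propagate back to an earlier index; the irreducibility of $V$ will be essential in ruling out degenerate outputs corresponding to a simultaneously triangularizable tuple stabilizing a common line, which would contradict the hypothesis. Once the normal form is in hand, the Frobenius matrices in the statement follow immediately by multiplying out $A^{(i)} \cdot \Diag(p^{k_{i-1}}, 1)$ in each Type, so the resulting $D(A)$ is isomorphic to $D$ as a filtered $\varphi$-module, whence $V \cong \V_\cris^*(D(A)) = V(A)$.
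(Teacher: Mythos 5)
Your overall route is essentially the paper's: pass via $\D_\cris^*$ to a strongly divisible lattice, encode it as a tuple $(A)\in\GL_2(\oh_F)^f$ up to parabolic equivalence (Theorem \ref{para-equiv-class-thm}), normalize the tuple by an iterative simplification according to whether the bottom-left entry is a unit (Lemmas \ref{para-equiv-inv-lem}, \ref{para-equiv-killing-lem} and Proposition \ref{class-para-equiv-classes-prop}), and invoke irreducibility to discard the degenerate case. One bookkeeping correction: the residual freedom is the full upper-triangular group over $\oh_F$ acting by the $\Delta$-twisted rule $B^{(i)}=C^{(i)}A^{(i)}\Delta_{\kvec^{(i-1)}}(C^{(i-1)})^{-1}\Delta_{\kvec^{(i-1)}}^{-1}$, not plain $\varphi$-conjugation $P^{(i)}A^{(i)}(P^{(i-1)})^{-1}$; the conjugation by $\Delta_{\kvec^{(i-1)}}$ multiplies the propagated correction by $p^{k_{i-1}}$, and this is exactly what makes the iteration converge (and stay integral) when no bottom-left entry is a unit, and what lets the cycle close when one is. Relatedly, ``rescaling the second row to make the bottom-right entry $1$'' at every index is not available in general: going around $\Z/f\Z$, when every index is of Type $\II$ one can only reach $\II_{\alpha^{(i)}}$ with units $\alpha^{(i)}$; the normalization to $\II_1$ at each Type $\II$ index uses the presence of at least one Type $\I$ index to absorb the accumulated unit.

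The more serious gap is the stated role of irreducibility. A simultaneously lower-triangular tuple does give a $\varphi$-stable line in $D(A)$, but in $\MF_{F\otimes_{\Qp}K}^{\varphi,w.a.}$ a $\varphi$-stable line is a subobject only if it is itself weakly admissible, i.e.\ its Newton and Hodge invariants agree; a common stable line by itself does not contradict irreducibility of $V$. This is precisely the content of Lemma \ref{eigan-red-lem} and Proposition \ref{red-fil-phi-mod-prop}: in the all-Type-$\II$ case the relevant eigenvalue of $\varphi^f$ is the unit $\prod_i\alpha^{(i)}$ up to terms divisible by $p^{w'}$, and one verifies that the corresponding line spans a strongly divisible sub-lattice, hence a weakly admissible subobject, so $D(A)$ is reducible and the case is excluded. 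Without this slope/strong-divisibility verification, your exclusion of the all-triangular case --- and with it the reduction from $\II_{\alpha}$ to $\II_1$ and the final form of the Frobenius matrices --- does not follow.
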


\noindent In practice, the utility of this theorem is in reducing the number of unrestricted parameters from $4f$ to $2f$ parameters and we additionally are able to place a $p$-adic valuation condition on the $f$-many $a_2^{(i)}$ parameters irreducibility.\footnote{By fixing determinants we may further restrict the defining parameters to just the $f$-many $a_2^{(i)}$ parameters.}

With an understanding of the structure of $V(A)$ in place, we then move on to the task of computing the semi-simple modulo $\varpi$ reductions of irreducible, two-dimensional crystalline representations $V(A)$. By this, we mean the computation of $k_F$-representations $\overline{V(A)}=(T/\varpi T)^{\semi}$ where $T\subset V(A)$ is a $G_K$-stable $\oh_F$-lattice inside of $V(A)$. The study of such a drastic operation lies in the field of \textit{integral} $p$-adic Hodge theory which is significantly more complicated than the rational theory discussed in much of the first part. The main obstruction turns out to be the determination of suitable integral semi-linear algebraic structures associated to $D(A)\coloneqq\D_\cris^*(V(A))$. Unfortunately, the integral data of strongly divisible lattices do not suffice to describe the reductions in the generality that we desire.

For these reasons, we are incentivized to introduce the notion of Breuil and Kisin modules into our discussion. In essence, these modules are defined over power series rings and admit Frobenius and filtration structures. Their main utility is in describing the required semi-linear algebraic data that we need to compute reductions. In particular, a Kisin module $\Mf$ over $\Sig_F$ can be used to compute $\overline{V(A)}$ via the data provided by the modulo $\varpi$ reduction $\overline{\Mf}$ \textit{if} the Kisin module $\Mf$ is canonically associated to a Galois-stable $\oh_F$-lattice $T\subset V(A)$. The issue that persists is that it is very difficult to determine the structure of $\Mf$ from $V(A)$. To get around this, we follow results of \cite{BLL22} in Section \ref{Explicit-Kisin-mod-chap} to detail a way to explicitly construct a Kisin module $\Ms(\As)$ over an enlarged coefficient ring $S_F$ from the weakly admissible filtered $\varphi$-module $D(A)$. Here, we mean to say that we construct an $f$-tuple of matrices $(\As)$ from $(A)$ so that $(\As)$ determines the Frobenius action on $\Ms(\As)$. The largest labeled Hodge-Tate weights $(0,k_i)$ of $D(A)^{(i)}$ then correspond to the labeled heights $k_i$ of the constructed Kisin module $\Ms(\As)^{(i)}$. Since $\Ms(\As)$ is constructed from $D(A)$, then we may use $\Ms(\As)$ to compute the reduction of $V(A)$.

This leaves us with yet another problem. How are we to use the data of $\Ms(\As)$ over $S_F$ to descend the coefficients to an integral Kisin module $\Mf(\Af)$ over $\Sig_F$ so that we can compute the associated reduction? We give an answer in the form of a \textit{descent algorithm} inspired by \cite{Liu21} under some restrictions on height in Section \ref{red-alg-chap}. 

\begin{thmx}[\textit{c.f.} Theorem \ref{descent-thm}]
	Suppose $p>2$. Let $\Ms(\As)$ be a Kisin module over $S_F$ with labeled heights $k_i\le c(p-2)$, for some integer $c>1$ constructed from $D(A)=\D_\cris^*(V(A))$. Suppose there exists a sequence of determinant preserving base changes $(X_n)$ over $S_F[1/p]$ such that by setting $X_n^{(i)}*_\varphi\As^{(i)}=\As_n^{(i)}$, there is a finite $m>0$ such that
	\[\As_m^{(i)}=\Af_0^{(i)}+C^{(i)}\]
	with $\Af_0^{(i)}\in\Mat_2(\Sig_F)$ and $C^{(i)}\in\Mat_2(I_c)$ where $I_c$ is a particular ideal in $S_F$ depending on $c>1$. Then there exists a Kisin module $\Mf(\Af)$ canonically associated to a Galois-stable lattice $T\subset V(A)$ such that $\overline{\Af}^{(i)}\equiv\Af_0^{(i)}\pmod\varpi$.
\end{thmx}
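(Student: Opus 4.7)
The approach is a descent argument in three steps: first, use the given base changes to put $\Ms(\As)$ into a ``nearly $\Sig_F$-rational'' form over $S_F[1/p]$; second, truncate to produce a candidate integral Kisin module $\Mf(\Af)$ over $\Sig_F$; third, invoke Kisin's classification to attach a Galois-stable $\oh_F$-lattice inside $V(A)$. For the first step, I would regard each $X_n$ as an isomorphism of rational Kisin modules over $S_F[1/p]$, so that every $\Ms(\As_n)$ represents the same crystalline $F$-representation $V(A)$ constructed in Section \ref{Explicit-Kisin-mod-chap}. Invariance of determinant along the chain forces $\det \Af_0^{(i)}$ to agree, modulo $I_c$, with the expected unit multiple of $E(u)^{k_{i-1}}$ coming from the original Frobenius matrix of $\Ms(\As)$.

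For the second step, I would fix a lift $\Af^{(i)} \in \Mat_2(\Sig_F)$ of $\Af_0^{(i)}$ (naturally obtained by truncating the power series expansion of $\As_m^{(i)}$) and verify that $\Mf(\Af) = \prod_i \Mf(\Af)^{(i)}$ is a genuine Kisin module over $\Sig_F$ with labeled heights $k_{i-1}$. The ideal $I_c$ is engineered to be ``small'' relative to $E(u)^{k_{i-1}}$ exactly when $k_i \le c(p-2)$; this is the content of the height-to-depth bound, and it allows the determinant and cokernel conditions that $\As_m^{(i)}$ satisfies over $S_F$ to transfer, modulo $\varpi$, to the corresponding conditions for $\Af^{(i)}$ over $\Sig_F$. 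The canonical base change from $\Mf(\Af) \otimes_{\Sig_F} S_F$ to $\Ms(\As_m)$ is then the identity modulo $I_c$, and using the $I_c$-adic completeness of $\Mat_2(S_F)$ together with the invertibility of $\Af^{(i)}$ over $S_F[1/p]$, I would iteratively refine this approximation into an honest isomorphism of Kisin modules.

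For the third step, Kisin's antiequivalence between Kisin modules of finite height over $\Sig_F$ and $G_\infty$-stable $\oh_F$-lattices in crystalline $F$-representations attaches to $\Mf(\Af)$ a lattice $T_\infty \subset V(A)|_{G_\infty}$; since $V(A)$ is crystalline, the $G_\infty$-action extends uniquely to a $G_K$-action, producing the sought Galois-stable lattice $T \subset V(A)$. The congruence $\overline{\Af}^{(i)} \equiv \Af_0^{(i)} \pmod{\varpi}$ is automatic from the choice of lift. The main obstacle is the second step: confirming that truncating $\As_m^{(i)}$ to its $\Sig_F$-part actually produces a Kisin module of the correct labeled height over the smaller coefficient ring, rather than one whose determinant or cokernel condition fails at the integral level. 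This is the genuine content of the Descent Algorithm, and the combined hypotheses $p>2$ and $k_i \le c(p-2)$ enter precisely here, controlling the interaction between the ideal $I_c \subset S_F$ and the Eisenstein factor $E(u)^{k_{i-1}}$.
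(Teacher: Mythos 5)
There is a genuine gap, and it sits exactly where you locate the ``main obstacle.'' Your plan is to \emph{define} $\Af^{(i)}$ by truncating $\As_m^{(i)}$ to its $\Sig_F$-part (i.e.\ essentially take $\Af^{(i)}=\Af_0^{(i)}$), declare the congruence modulo $\varpi$ ``automatic from the choice of lift,'' and then claim that the identity-modulo-$I_c$ comparison between $\Mf(\Af)\otimes_{\Sig_F}S_F$ and $\Ms(\As_m)$ can be ``iteratively refined'' into an isomorphism using $I_c$-adic completeness of $\Mat_2(S_F)$. This does not work as stated: there is no reason the truncated module is isomorphic, over any ring for which the comparison is useful, to the original $\Ms(\As)$, and without such an identification $\Mf(\Af_0)$ has no relation to $V(A)$ at all, so the canonical association to a lattice $T\subset V(A)$ cannot be invoked. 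The paper's Theorem \ref{descent-thm} does \emph{not} take $\Af=\Af_0$; it produces $\Af$ as the limit of a successive-approximation base change (Proposition \ref{red-alg-prop}) which is carefully arranged to converge in $\GL_2(R)$ with $R=\Sig_F[\![\varpi E/p]\!]$, not merely in $S_F[1/p]$. The engine of that convergence is the mechanism you leave unexplained: writing the error term as $C'E^{k_i}=C'B'\Af'$ and conjugating by $(\ident_2+C'B')^{-1}$ moves the error across $\varphi$, and since $\varphi(E)=p\gamma$, each pass converts $E$-adic depth into $p$-adic depth; the inequality $h_1(1-2p^{-1})\ge\max_i k_i$ (this is precisely where $p>2$ and $k_i\le c(p-2)$, i.e.\ $h_1=cp$, are used) guarantees the leftover non-integral part is strictly $E$-adically smaller each time, so the integral parts converge $p$-adically and the errors tend to $0$ $E$-adically. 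The congruence $\Af\equiv\Af_0\pmod\varpi$ is then a nontrivial output of the iteration — every correction term lies in $\varpi$-divisible ideals (via the decomposition $I_c\subset\varpi\Sig_F+J_c$ of Lemma \ref{I-decomp-lem} and the splittings $\varphi(\tilde C)=D_1+D_2$ with $D_1\in\Mat_2(\varpi\Sig_F)$, $D_2\in\Mat_2(\varpi\Fil^{cp}S_F)$) — not a consequence of a chosen lift. An iteration carried out only over $S_F$ or $S_F[1/p]$, as you propose, would at best identify modules after inverting $p$, which destroys exactly the integral information the theorem is about.

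The third step has a related defect. You appeal to ``Kisin's antiequivalence'' to attach a $G_\infty$-stable lattice to $\Mf(\Af)$ and then extend the action to $G_K$. What the paper actually needs (Proposition \ref{canon-assoc-prop}) is Proposition \ref{kis-lattice-ident-prop}: the Kisin module is canonically attached to a lattice $T\subset V(A)$ only once one exhibits an isomorphism $\Mf(\Af)\otimes_{\Sig_F}\Lambda_{F,s}\cong\M_\Kis(\D_\cris^*(V(A)))\otimes_{\Lambda_F}\Lambda_{F,s}$ for suitable $s$; this is available precisely because the base change of Theorem \ref{descent-thm} lives over $R[1/p]$ and $\varpi E/p$ converges on the relevant annulus, so $R[1/p]$ sits inside $\Lambda_{F,s}$. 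Since your construction supplies no map at all between $\Mf(\Af)$ and $\M_\Kis(\D_\cris^*(V(A)))$ over such a ring, the lattice association — and hence the conclusion that $\overline{\Mf(\Af)}$ computes $\overline{V(A)}$ — is not justified. In short, the determinant/height bookkeeping in your second step is in the right spirit (it mirrors Lemmas \ref{I-decomp-lem} and \ref{frob-h-mat-lem}), but the actual descent, the ring $R$ over which it converges, and the resulting canonical comparison are missing, and these constitute the substance of the theorem.
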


\noindent Hence, supposing we are able to find such a base change $(X_n)$, then the reduction of $V(A)$ may be computed from $\overline{\Mf(\Af)}$ as we understand its structure completely. Our entire strategy can be summarized in the following diagram:
\begin{center}
	\begin{tikzcd}
		V(A) \arrow[rr, "\D_\cris^*"] \arrow[d] &  & D(A) \arrow[rr, "\text{Construct}"] &  & \Ms(\As) \arrow[d, "\text{Descend}"]                               \\
		T \arrow[d]              &  &                                     &  & \Mf(\Af) \arrow[d, "\mod\varpi"] \arrow[llll, leftrightarrow, "\text{Canonical}"'] \\
		\overline{V(A)}        &  &                                     &  & \overline{\Mf(\Af)} \arrow[llll, "\text{Compute}"']                   
	\end{tikzcd}
\end{center}

As an application of the descent algorithm, we then display the calculations necessary to show such a base change exists so long as the $p$-adic valuations of the $a_2^{(i)}$-parameters are sufficiently large. Section \ref{comp-red-chap} then gives us the machinery to compute explicit reductions from $\Mf(\Af)$ so that we arrive at our final main result.

\begin{thmx}[Theorem \ref{large-val-red-thm}]
	For $p>2$, let $V\in\Rep_{\cris/F}^{\kvec}(G_K)$ be an irreducible, two dimensional crystalline representation of $G_K$ with labeled Hodge-Tate weights $\{k_i,0\}_{i\in\Z/f\Z}$ where $k_i>0$. Then $V\cong V(A)$ and if $(A^{(i)})_{i\in\Z/f\Z}$ is such that 
	\[\nu_p(a_2^{(i)})>\max\left\{\Bigg\lfloor\frac{k_i-1}{p-2}\Bigg\rfloor,\Bigg\lfloor\frac{k_{max}-1}{p-2}\Bigg\rfloor-\Bigg\lfloor\frac{k_i-1}{p-2}\Bigg\rfloor-1\right\},\]
	then there exists $v_i,w_i\in\{k_i,0\}$ such that:
	\begin{enumerate}
		\item If $|\Ss|$ is even, then 
		\[\overline{V}|_{I_K}=\omega_f^{\sum_{j=0}^{f-1} p^{j}v_j}\oplus\omega_f^{\sum_{j=0}^{f-1} p^{j}w_j};\]
		\item If $|\Ss|$ is odd, set $t=p\sum_{j=0}^{f-1} p^{j}w_j+\sum_{j=0}^{f-1} p^{j}v_j$, 
		\begin{enumerate}
			\item[(i)] When $p^f-1\nmid t$ then 
			\[\overline{V}|_{G_\infty}=\ind^{G_{\Qpf}}_{G_{\Q_{p^{2f}}}}\left(\omega_{2f}^{t}\right);\]
			
			\item[(ii)] When $p^f-1\mid t$ then 
			\[\overline{V}|_{I_K}=\omega_f^{\frac{t}{p^f-1}}\oplus\omega_f^{\frac{t}{p^f-1}}.\]
		\end{enumerate}
	\end{enumerate}
\end{thmx}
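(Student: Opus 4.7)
The plan is to follow the strategy summarized in the diagram preceding the statement, using the three main ingredients already established: the explicit model of $V(A)$ from Theorem A, the construction of a Kisin module $\Ms(\As)$ over $S_F$ from $D(A)$ developed in Section \ref{Explicit-Kisin-mod-chap}, and the descent algorithm of Theorem B. First, I would invoke Theorem \ref{model-irred-cris-reps-thm} to replace $V$ by an isomorphic $V(A)$ whose associated filtered $\varphi$-module $D(A)=\prod D(A)^{(i)}$ has partial Frobenius matrices of Type $\I$ or Type $\II$ in the distinguished basis, and note that the irreducibility hypothesis together with the shape of these matrices is what forces $a_2^{(i)}$ to satisfy nontrivial valuation constraints. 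This puts every irreducible $V$ into a form controlled by the $f$-tuple of scalars $(a_1^{(i)},a_2^{(i)})$, and identifies the distinguished set $\Ss$ (the indices where $A^{(i)}$ is of Type $\II$, say) whose parity will govern the dichotomy in the conclusion.

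Next, I would construct $\Ms(\As)$ over $S_F$ from $D(A)$ as in Section \ref{Explicit-Kisin-mod-chap}, obtaining explicit partial Frobenius matrices $\As^{(i)}$ expressed in terms of the $a_j^{(i)}$ and the chosen basis, with labeled heights exactly the $k_i$. The critical step is then the production of a sequence of determinant-preserving base changes $X_n\in\GL_2(S_F[1/p])$ such that $\As_n^{(i)} = X_n^{(i)}*_\varphi \As^{(i)}$ stabilizes (after finitely many $n=m$ steps) in the form $\Af_0^{(i)}+C^{(i)}$ with $\Af_0^{(i)}\in\Mat_2(\Sig_F)$ and $C^{(i)}\in\Mat_2(I_c)$ required by Theorem \ref{descent-thm}; this is where the valuation hypothesis
\[
\nu_p(a_2^{(i)})>\max\!\left\{\bigg\lfloor\tfrac{k_i-1}{p-2}\bigg\rfloor,\;\bigg\lfloor\tfrac{k_{max}-1}{p-2}\bigg\rfloor-\bigg\lfloor\tfrac{k_i-1}{p-2}\bigg\rfloor-1\right\}
\]
enters in an essential way. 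Concretely, I would design each $X_n$ to kill the off-Type entries one valuation at a time, tracking the $u$-adic and $p$-adic contributions simultaneously; the hypothesis on $\nu_p(a_2^{(i)})$ is calibrated so that at each step the error that remains lies in the ideal $I_c$ (with $c$ chosen so $k_i\le c(p-2)$), and so that the successive corrections geometrically shrink in the appropriate Banach topology on $S_F[1/p]$. I expect this matrix-simplification step, modeled on Liu's technique in \cite{Liu21}, to be the principal technical obstacle; the bookkeeping distinguishes between Type $\I$ and Type $\II$ and must be arranged so that the $\Af_0^{(i)}$ which emerge are essentially diagonal (or antidiagonal) with entries involving $u^{k_i}$ and unit scalars determined by $a_1^{(i)}$.

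With the hypothesis of Theorem \ref{descent-thm} verified, the descent algorithm yields a Kisin module $\Mf(\Af)$ over $\Sig_F$ canonically associated to a Galois-stable lattice $T\subset V(A)$, with $\overline{\Af}^{(i)}\equiv \Af_0^{(i)}\pmod\varpi$. I would then reduce modulo $\varpi$ and read off the $\varphi$-module $\overline{\Mf(\Af)}$ over $k_F[\![u]\!]$ (decomposed via the embeddings of $k$ into $k_F$) using the machinery of Section \ref{comp-red-chap}. The shape of the reduced matrices shows that after composing the $f$ partial Frobenii around the cycle, the total Frobenius acts on the étale $\varphi$-module $\overline{\Mf(\Af)}[1/u]$ by a matrix whose diagonal or antidiagonal entries are monomials $u^{v_i},u^{w_i}$ with $v_i,w_i\in\{0,k_i\}$ chosen according to the Type of $A^{(i)}$. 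Finally, I would invoke the dictionary between étale $\varphi$-modules and $G_\infty$-representations, together with Fontaine's theorem identifying the tame inertia characters with the fundamental characters $\omega_f,\omega_{2f}$: if the composed Frobenius is diagonal (equivalently $|\Ss|$ even) the representation splits as a sum of tame characters and one obtains case (a), while if it is antidiagonal ($|\Ss|$ odd) one computes the characteristic polynomial and distinguishes whether its roots lie in $\Fpf$ or generate $\F_{p^{2f}}$, giving cases (b)(ii) and (b)(i) respectively via induction from $G_{\Q_{p^{2f}}}$. The transition from $G_\infty$ to $I_K$ in cases (a) and (b)(ii) is automatic because tame characters of $I_K$ factor through the common quotient with $I_\infty$.
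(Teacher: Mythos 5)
Your plan reproduces the paper's own argument: Theorem \ref{model-irred-cris-reps-thm} to replace $V$ by $V(A)$, the explicit construction of $\Ms(\As)$ over $S_F$ from Section \ref{Explicit-Kisin-mod-chap}, verification of the Descent Assumptions under the stated valuation bound (the paper's Proposition \ref{dec-assump-large-val-prop}, carried out by one explicit row operation plus Lemma \ref{phi-admis-lem} rather than a step-by-step iteration), descent and canonical association via Theorem \ref{descent-thm} and Proposition \ref{canon-assoc-prop}, and finally reading off $\overline{V}$ from the monomial reduction data through Propositions \ref{compute-Hom-prop} and \ref{compute-hom-with-data-prop}, exactly as in Section \ref{red-large-val-sec}. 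One small correction: in the paper $\Ss$ is the set of Type $\I$ indices (whose reduced partial Frobenii are the antidiagonal ones), not Type $\II$; since $f$ may be odd these two parities are not interchangeable, so the reducible/irreducible dichotomy must be governed by the count of Type $\I$ factors as in the stated theorem.
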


\noindent Hence, we obtain an improvement of \cite{Dou10} where reductions of certain representations are given on the bound $\nu_p(a_2^{(i)})>\lfloor\frac{k_{max}-1}{p-1}\rfloor$ for all $i\in\Z/f\Z$.

\subsection{Acknowledgments} This article alongside \cite{Guz2} were borne out of the authors Ph.D. thesis at the University of Arizona under the direction of Brandon Levin, whose generosity and knowledge made this work possible. It will quickly become evident the influence that Tong Liu and John Bergdall have in this paper, we thank them for the insightful conversations related to this project. 

\section{\texorpdfstring{Preliminaries}{Preliminaries}}\label{prelim-chap}

\subsection{\texorpdfstring{Filtered $\varphi$-Modules and Crystalline Representations}{Filtered phi-Modules and Crystalline Representations}}\label{cris-fil-sec}

Let $\Rep_{/F}(G_K)$ denote the category of $d$-dimensional $F$-representations of $G_K$. For any $V\in\Rep_{/F}(G_K)$, define
\[\D_\cris(V)=(\B_\cris\otimes_{\Q_p}V)^{G_K}.\]
where $\B_\cris$ is Fontaine's crystalline period ring, see \cite{Fon91}. The object $\D_\cris(V)$ is naturally a rank-$d$ filtered $\varphi$-module over $K\otimes_{\Q_p}F$. Note that by Remark \ref{K=K0-rem}, a filtered $\varphi$-module for unramified $K$ will be a $K_0\otimes_{\Q_p}F$-vector space so the filtration and underlying vector space are valued over the same field. We denote the category consisting of such objects by $\MF_{K\otimes_{\Q_p}F}^{\varphi}$ with morphisms being linear maps of filtered vector spaces which are compatible with the $\varphi$-semilinear endomorphisms. In particular, we assume that any morphism $h:D\rightarrow D'$ in this category is \textit{strict} in the sense that $h(\Fil^jD)=h(D)\cap\Fil^jD'$.

Define the composition $\tau_i\coloneqq\tau_0\circ\sigma_K^{-i}:K\hookrightarrow F$ so that the set $|\tau|\coloneqq\{\tau_i\}$ for $0\le i\le f-1$ gives a list of $f$-distinct embeddings of $K$ into $F$. As we have fixed this sequence of embeddings, we will now reserve the index $i\in\Z/f\Z$ to denote the enumeration of this list of embeddings.

Define the set 
\[F^{|\tau|}\coloneqq\prod_{\tau_i:K\hookrightarrow F} F\hspace{.5cm}\text{with elements}\hspace{.5cm} \sum_{x\in F,y\in K}\left(x\tau_0(y),x\tau_1(y),\dots,x\tau_{f-1}(y)\right).\]
The product $F^{|\tau|}$ is a ring with multiplication induced by component wise multiplication on the principle elements given by $(x\tau_i(y))_i\cdot(x'\tau_i(y'))_i\coloneqq (xx'\tau_i(yy'))_i$. This gives a natural ring isomorphism
\begin{align*}
	\theta&:F\otimes_{\Qp}K\rightarrow F^{|\tau|} &x\otimes y&\mapsto (x\tau_i(y))_{i\in\Z/f\Z}.
\end{align*}

\noindent The automorphism $1_F\otimes\sigma_K$ on $F\otimes_{\Qp}K$ transforms via $\theta$ to an automorphism $\varphi:F^{|\tau|}\rightarrow F^{|\tau|}$ defined by $\varphi(v_0,v_1,v_2,\dots,v_{f-1})=(v_1,v_2,\dots,v_{f-1},v_0)$. We endow $F^{|\tau|}$ with an $F\otimes_{\Q_p}K$-module structure defined by $(a\otimes b)(v_0,\dots,v_{f-1})=(a\tau_0(b)v_0,\dots,a\tau_{f-1}(b)v_{f-1})$.

\begin{lem}\label{tensor-decomp-lem}
	The ring isomorphism $\theta:F\otimes_{\Q_p}K\rightarrow F^{|\tau|}$ defines a $\varphi$-equivariant $F\otimes_{\Qp}K$-module isomorphism.
	\begin{proof}
		This map is clearly a module homomorphism. The embeddings $\tau_i$ are linearly independent over $F$ and since $F^{|\tau|}$ can be viewed as an $f$-dimensional $F$-vector space, then the $f$-distinct embeddings $\tau_i$ will define a maximal linearly independent list. Thus the map $\theta$ is an isomorphism of modules. Since the Frobenius action $\varphi$ on $F^{|\tau|}$ is defined by $1_F\otimes \sigma_K$, then it is clear that $\varphi\circ\theta=\theta\circ(1_F\otimes\sigma_K)$ giving $\varphi$-compatibility.
	\end{proof}
\end{lem}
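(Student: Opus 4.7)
The plan is to verify three things in order: (1) $\theta$ is a well-defined $(F\otimes_{\Q_p}K)$-module homomorphism, (2) $\theta$ is bijective, and (3) $\theta$ intertwines the two Frobenius actions. The first point is essentially formal: bilinearity in the variables $x$ and $y$ over $\Q_p$ shows $\theta$ descends to the tensor product, and compatibility with the prescribed action of $F\otimes_{\Q_p} K$ on $F^{|\tau|}$ can be checked on elementary tensors, since $\theta((a\otimes b)(x\otimes y)) = (ax\tau_i(by))_i = (a\tau_i(b))_i\cdot (x\tau_i(y))_i$, which matches the definition of the module action given before the statement.

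For bijectivity, the cleanest route is a dimension count combined with a separability argument. Both source and target are free $F$-modules of rank $f$, so it suffices to prove injectivity (or, equivalently, surjectivity). Because $K/\Q_p$ is separable, the primitive element theorem gives $K=\Q_p(\alpha)$ with minimal polynomial $m(X)\in\Q_p[X]$ of degree $f$, so $F\otimes_{\Q_p} K\cong F[X]/(m(X))$. The $f$ distinct embeddings $\tau_i$ furnish $f$ distinct roots $\tau_i(\alpha)\in F$ of $m(X)$, so $m(X)$ splits completely over $F$ and the Chinese remainder theorem yields $F[X]/(m(X))\cong \prod_{i} F[X]/(X-\tau_i(\alpha))\cong F^{|\tau|}$; unwinding the isomorphism recovers exactly the formula $x\otimes y\mapsto (x\tau_i(y))_i$ defining $\theta$. (Alternatively, one can appeal directly to Dedekind-Artin linear independence of characters to see that $(\tau_i(y))_i$ ranges over an $F$-spanning set as $y$ ranges over $K$.)

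Finally, the $\varphi$-equivariance is an unravelling of definitions. On elementary tensors, $\theta\circ(1_F\otimes\sigma_K)(x\otimes y) = (x\,\tau_i(\sigma_K(y)))_i$, and the relation $\tau_i\circ\sigma_K = \tau_0\circ\sigma_K^{-(i-1)} = \tau_{i-1}$ reduces this to the cyclic shift of $\theta(x\otimes y) = (x\tau_i(y))_i$, which by hypothesis agrees with the action of $\varphi$ on $F^{|\tau|}$. I expect no serious obstacle here; the only subtle point worth flagging is the indexing convention $\tau_i=\tau_0\circ\sigma_K^{-i}$, which is precisely chosen so that composition with $\sigma_K$ implements the prescribed cyclic shift on the product ring, so the compatibility is ultimately bookkeeping once the convention is fixed.
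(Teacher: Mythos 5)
Your proposal is correct, and it is more careful than the paper's own proof in two respects. For bijectivity the paper argues purely by Dedekind--Artin linear independence of the $f$ distinct embeddings $\tau_i$: since both sides are $f$-dimensional over $F$, the spanning of $F^{|\tau|}$ by the tuples $(\tau_i(y))_i$ forces $\theta$ to be an isomorphism. Your primary route instead writes $K=\Q_p(\alpha)$, identifies $F\otimes_{\Q_p}K\cong F[X]/(m(X))$, and applies the Chinese remainder theorem after noting that $m$ splits over $F$; this buys you an explicit inverse and, in effect, the idempotents $\idem_i$ that the paper introduces immediately after the lemma, at the cost of a slightly longer argument (and you record the Dedekind alternative anyway, so nothing is lost). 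For $\varphi$-equivariance you actually perform the computation the paper declares ``clear'': the identity $\tau_i\circ\sigma_K=\tau_{i-1}$ is right and shows that $\theta\circ(1_F\otimes\sigma_K)$ sends the $i$-th component to $x\tau_{i-1}(y)$. Note that this is the \emph{inverse} of the shift $(v_0,\dots,v_{f-1})\mapsto(v_1,\dots,v_{f-1},v_0)$ displayed in the paper, but it is the shift consistent with the paper's subsequent convention $\varphi^{(i)}:\varphi^*F^{(i-1)}\to F^{(i)}$; the discrepancy is an off-by-one in the paper's displayed formula rather than a flaw in your argument, though it is worth flagging since the index $i$ is used heavily later.
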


Define $\idem_i\coloneqq(0,\dots,0,1,0,\dots,0)$ to be the idempotent of $\varphi$ in $F^{|\tau|}$ where the $1$ appears in the $i$-th component of the vector. Set $F^{(i)}\coloneqq \idem_iF$ and observe that each $F^{(i)}$ is isomorphic to $F$ as $F$-modules. It follows by Lemma \ref{tensor-decomp-lem} that we may decompose $F\otimes_{\Q_p}K$ in terms of these idempotents
\[F\otimes_{\Qp}K\cong\prod_{i\in\Z/f\Z}F^{(i)}\]
where $\varphi\coloneqq(\varphi^{(0)},\dots,\varphi^{(f-1)})$ with $\varphi^{(i)}:\varphi^*F^{(i-1)}\rightarrow F^{(i)}$, the \textit{i-th partial Frobenius}.

Let $D\in\MF_{F\otimes_{\Q_p}K}^\varphi$ be a filtered $\varphi$-module over $F\otimes_{\Qp}K$. Recall that technically speaking, $D$ is a $F\otimes_{\Q_p}K_0$-vector space but by Remark \ref{K=K0-rem}, $K_0=K$ so we are able to make the following observation. Define $D^{(i)}\coloneqq \idem_iD$ so that the above discussion implies that $\theta:D\rightarrow\prod_i D^{(i)}$ is an isomorphism of $F\otimes_{\Q_p}K$-modules and the Frobenius action on $\prod_i D^{(i)}$ acts via $\varphi_D\coloneqq(\varphi_{D}^{(0)},\dots,\varphi_{D}^{(f-1)})$ with $i$-th partial Frobenius $\varphi_D^{(i)}:\varphi^*D^{(i-1)}\rightarrow D^{(i)}$.

\begin{prop}\label{fil-idem-decomp-prop}
	There is an isomorphism of filtered $\varphi$-modules
	\[D\cong\prod_{i\in\Z/f\Z}D^{(i)}\]
	so that $\Fil^jD^{(i)}=\idem_i\Fil^j D$.
	\begin{proof}
		The fact that $\theta$ is $\varphi_D$-compatible follows from the fact that 
		\[\varphi_D\left(x\tau_{i+1}(y)v\right)_{i\in\Z/f\Z}=\left(x\tau_{i}(y)\varphi_{D}^{(i)}(v)\right)_{i\in\Z/f\Z}\]
		for $x\otimes y\in F\otimes_{\Qp}K$ and $v\in D^{(i+1)}$. Furthermore, we induce a filtration structure on each idempotent piece by taking $\Fil^j D^{(i)}\coloneqq\Fil^jD \cap D^{(i)}=\idem_i\Fil^jD$. This gives us that $\theta$ is compatible with both the Frobenius and filtration structures so we get a morphism of filtered $\varphi$-modules.
	\end{proof}
\end{prop}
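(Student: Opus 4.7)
My plan is to upgrade the ring-level decomposition of $F\otimes_{\Q_p} K$ into idempotent components $F^{(i)}$, already established by Lemma \ref{tensor-decomp-lem}, to an analogous decomposition of $D$ as a filtered $\varphi$-module. The module-theoretic piece is essentially automatic: the elements $\{\idem_i\}_{i\in\Z/f\Z}$ pull back to a complete system of orthogonal idempotents in $F\otimes_{\Q_p} K$ via $\theta^{-1}$ with $\sum_i \idem_i = 1$, and therefore every module over $F\otimes_{\Q_p} K$ canonically splits as $D = \bigoplus_i \idem_i D = \bigoplus_i D^{(i)}$. The natural map $D \to \prod_i D^{(i)}$ is then an $F\otimes_{\Q_p} K$-module isomorphism.

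Next I would verify $\varphi$-compatibility. The Frobenius $\varphi_D$ on $D$ is semilinear over $F\otimes_{\Q_p} K$ with respect to $1_F \otimes \sigma_K$, which under $\theta$ is precisely the cyclic shift $(v_0,\dots,v_{f-1}) \mapsto (v_1,\dots,v_{f-1},v_0)$ described after Lemma \ref{tensor-decomp-lem}. This shift permutes the idempotents $\idem_i$ cyclically, so semilinearity forces $\varphi_D(\idem_{i-1} D) \subseteq \idem_i D$, i.e.\ $\varphi_D(D^{(i-1)}) \subseteq D^{(i)}$. The restriction of $\varphi_D$ linearizes to a partial Frobenius $\varphi_D^{(i)} : \varphi^* D^{(i-1)} \to D^{(i)}$, which assembles into the desired $\varphi_D = (\varphi_D^{(0)},\dots,\varphi_D^{(f-1)})$ on $\prod_i D^{(i)}$.

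For the filtration, the essential observation is that $\Fil^j D$ is by definition an $F\otimes_{\Q_p} K$-submodule of $D$ (here I am using Remark \ref{K=K0-rem} to identify $K_0 = K$, so the filtration and the underlying space live over the same ring). Consequently $\Fil^j D$ is stable under multiplication by each $\idem_i$, and the module decomposition restricts to give $\Fil^j D = \bigoplus_i \idem_i \Fil^j D$. Setting $\Fil^j D^{(i)} \coloneqq \idem_i \Fil^j D$ then yields a filtration on each factor so that the module isomorphism $D \cong \prod_i D^{(i)}$ sends $\Fil^j D$ isomorphically onto $\prod_i \Fil^j D^{(i)}$, which is exactly the strictness condition required of a morphism of filtered $\varphi$-modules.

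I do not expect any serious obstacle here; the entire content of the proposition is in transferring the idempotent decomposition from the ring to a module with compatible structure. The only point requiring care is the filtration step, where one must remember that $\Fil^j D$ carries the full $F\otimes_{\Q_p} K$-action so that the idempotents act on it; once this is acknowledged, the splitting is formal.
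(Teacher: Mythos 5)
Your proof is correct and follows essentially the same route as the paper: decompose $D$ via the orthogonal idempotents $\idem_i$, use semilinearity of $\varphi_D$ over $1_F\otimes\sigma_K$ (the cyclic shift) to obtain the partial Frobenii, and define $\Fil^jD^{(i)}=\idem_i\Fil^jD$. You merely spell out two points the paper leaves implicit — that $\Fil^jD$ is an $F\otimes_{\Q_p}K$-submodule and hence idempotent-stable, and that the shift of idempotents forces $\varphi_D$ to map one component into the next — so no substantive difference.
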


With this, we may pass between a filtered $\varphi$-module $D$ and its decomposition $\prod_i D^{(i)}$ which we do so freely by the results of Proposition \ref{fil-idem-decomp-prop} and we can view each piece $D^{(i)}$ as a filtered $\varphi$-module over $F$ via the isomorphism of $F^{(i)}$ and $F$.\footnote{We use `view' loosely here. Indeed, each $D^{(i)}$ will have its own filtration structure but the partial Frobenius are really maps between $D^{(i-1)}$ and $D^{(i)}$.} A morphism in the category $\MF_{F\otimes_{\Qp}K}^\varphi$ can then be interpreted as an $f$-tuple $h\coloneqq(h^{(0)},\dots,h^{(f-1)})$ where each $h^{(i)}:D^{(i)}\rightarrow D'^{(i)}$ is a morphism viewed in the category of filtered $\varphi$-modules over $F$ respecting the idempotent decomposition. 

For each embedding, the integers $k_j\in\Z$ such that $\Fil^{k_j}D^{(i)}\supsetneq\Fil^{k_j+1}D^{(i)}$ are called \textit{labeled Hodge-Tate weights} for the embedding $\tau_i$. Such weights will often be summarized as $\kvec\coloneqq (k_{ij})_{\tau_i}$ where $1\le j\le d$ with weights possibly repeating. 

We denote the proper subcategory of weakly admissible filtered $\varphi$-modules over $K\otimes_{\Q_p}F$ by $\MF_{K\otimes_{\Q_p}F}^{\varphi,w.a.}$ and we denote the proper subcategory of crystalline representations by $\Rep_{\cris/F}(G_K)$.

\begin{thm}\label{CF-equiv-w-coeff-thm}
	There is an equivalence of categories
	\[\D_\cris:\Rep_{\cris/F}(G_K)\rightarrow\MF_{F\otimes_{\Qp}K}^{\varphi,w.a.}\]
	with quasi-inverse functor denoted by $\V_\cris$.
	\begin{proof}
		See \cite[\S~3]{BM02}.
	\end{proof}
\end{thm}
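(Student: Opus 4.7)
The plan is to reduce the statement to the classical Colmez--Fontaine theorem without coefficients, which asserts that $\D_\cris$ gives an equivalence between $\Rep_{\cris/\Q_p}(G_K)$ and $\MF_K^{\varphi,w.a.}$ with quasi-inverse $\V_\cris(D)=\Fil^0(\B_\cris\otimes_{K_0}D)^{\varphi=1}$. Since an $F$-representation is simply a $\Q_p$-representation carrying a commuting $F$-action, by the functoriality of $\D_\cris$ this $F$-action transfers through, and the essential task is to verify that all structures on both sides are compatible.

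First, I would verify that $\D_\cris$ sends $\Rep_{\cris/F}(G_K)$ into $\MF_{F\otimes_{\Q_p}K}^{\varphi,w.a.}$. For $V\in\Rep_{\cris/F}(G_K)$, the module $\D_\cris(V)$ inherits a natural $K$-action from the embedding $K=K_0\hookrightarrow\B_\cris$, while the $F$-action on $V$ descends through the $\Q_p$-linearity of $\D_\cris$ to an $F$-action on $\D_\cris(V)$ commuting with both $\varphi$ and the filtration (since both $G_K$ and the $F$-action commute, and $\varphi$, $\Fil$ come from $\B_\cris$). Together these endow $\D_\cris(V)$ with the desired $F\otimes_{\Q_p}K$-module structure. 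Weak admissibility is preserved because it is a numerical condition on the underlying $K$-module with its $\varphi$ and filtration, and adjoining the $F$-action does not affect the Newton and Hodge number computations since $F$ is flat over $\Q_p$.

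Second, for the quasi-inverse, the $F$-action on $D\in\MF_{F\otimes_{\Q_p}K}^{\varphi,w.a.}$ transports to $\B_\cris\otimes_KD$ by $F$-linearity and commutes with $\varphi$ and the filtration, hence restricts to a natural $F$-action on $\V_\cris(D)$. One then verifies that the natural maps $V\to\V_\cris(\D_\cris(V))$ and $D\to\D_\cris(\V_\cris(D))$ are isomorphisms of $F$-representations and of $F\otimes_{\Q_p}K$-modules respectively. The classical Colmez--Fontaine theorem already guarantees that these maps are isomorphisms of the underlying $\Q_p$- and $K$-objects, and the $F$-action is preserved by construction.

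The main obstacle is the careful bookkeeping needed to ensure every operation respects the $F$-action---particularly the fixed-point construction defining $\V_\cris$ and filtration compatibility when forming tensor products---along with confirming that weak admissibility transports cleanly between the coefficient and non-coefficient settings. Once the classical theorem is in hand, however, the essential content here is just that observation plus the decomposition $F\otimes_{\Q_p}K\cong\prod F^{(i)}$ from Lemma \ref{tensor-decomp-lem}, which makes the $F\otimes_{\Q_p}K$-module structure transparent at the level of partial Frobenius slots and confirms compatibility with the subsequent analysis of the paper.
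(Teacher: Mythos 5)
Your proposal is correct and is precisely the standard descent-of-coefficients argument that the cited reference \cite[\S~3]{BM02} carries out: reduce to the classical Colmez--Fontaine equivalence over $\Q_p$ and check that the commuting $F$-action transports through $\D_\cris$ and $\V_\cris$, with weak admissibility being a condition only on the underlying $K$-object. The paper offers no argument of its own beyond the citation, so there is nothing further to compare.
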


\noindent We note that the above equivalence respects the notion of duality; in other words, we have $\D_\cris(V^*)=(\D_\cris(V))^*\eqcolon \D_\cris^*(V)$ and similarly for $\V_\cris$ so that 
\begin{align*}
	\D_\cris^*(\cdot)&=\Hom_{F[G_K]}(\cdot,\B_\cris) & \V_\cris^*(\cdot)&=\Hom_{\varphi,\Fil}(\cdot,\B_\cris).
\end{align*} 

\noindent Since the functors $\D_\cris$ and $\V_\cris$ are both covariant, it is easy to see that the dual functors will be contravariant. In fact, it is often convenient for the purposes of computing reductions that we use these contravariant functors, see Section \ref{mod-rep-sec}.

We may restrict the equivalence of categories of Theorem \ref{CF-equiv-w-coeff-thm} to crystalline representations with prescribed labeled Hodge-Tate weights $\Rep_{\cris/F}^\kvec(G_K)$ and weakly admissible filtered $\varphi$-modules with the same weights $\MF_{F\otimes_{\Q_p}K}^{\varphi,w.a.,\kvec}$, denoted
\[\D_\cris:\Rep_{\cris/F}^\kvec(G_K)\rightarrow \MF_{F\otimes_{\Q_p}K}^{\varphi,w.a.,\kvec}.\]

\begin{rem}\label{semistable-rem}
	While this article focuses on crystalline representations, certain parts of the theory, see Section \ref{BK-chap}, are more naturally stated in terms of \textit{semistable representations}. At this level, we have an equivalence $\D_\st:\Rep_{\st/F}(G_K)\rightarrow\MF_{F\otimes_{\Q_p}K}^{\varphi,N,w.a.}$ with quasi-inverse $\V_\st$. Here we are taking $\MF_{F\otimes_{\Q_p}K}^{\varphi,N,w.a.}$ to denote the category of weakly admissible filtered $\varphi$-modules admitting a $K_0\otimes_\Qp F$-linear monodromy operator $N$ such that $N\circ\varphi=p\varphi\circ N$. Intuitively, one should think of crystalline representations as being semistable representations with zero monodromy.
\end{rem}

In this article, we are concerned with computing the modulo $p$-reductions of crystalline representations.
\begin{defin}\label{reduction-def}
	For $V\in\Rep_{\cris/F}(G_K)$, let $T\subset V$ be a $G_K$-stable $\oh_F$-lattice. We define the semi-simple modulo $\varpi$ reduction of $V$ to be
	\[\overline{V}=(T\otimes_{\oh_F}k_F)^\semi.\]
\end{defin}
\noindent The existence of such a lattice $T\subset V$ is well known and the reduction is unique up to semi-simplification by the Brauer-Nesbitt theorem. In order to compute $\overline{V}$, we require the following \textit{integral} $p$-adic Hodge Theory.


\subsection{Breuil and Kisin Modules}\label{BK-chap}

Let $R\subseteq(F\otimes_{\Q_p}K_0)\pu$ be a $\varphi$-stable subring containing $E$. We define a \textit{$\varphi$-module} over $R$ to be a finite free $R$-module $M$ admitting an injective $\varphi$-semilinear operator $\varphi_M:M\rightarrow M$.

\begin{defin}\label{kisin-mod-def}
	A \textit{Kisin module over $\Sig_F$ of ($E$-)height $\le h$} is a $\varphi$-module $\Mf$ over $\Sig_F$ such that the linearization map
	\[1\otimes\varphi_\Mf:\varphi^*\Mf\coloneqq \Sig_F\otimes_{\varphi,\Sig_F}\Mf\rightarrow\Mf\]
	has cokernel killed by $E^h$.
\end{defin}

\noindent The category of Kisin modules over $\Sig_F$ with height $\le h$ is denoted $\Mod_{\Sig_F}^{\varphi,\le h}$ with morphisms being $\Sig_F$-linear maps compatible with the Frobenius actions. 

We intend to associate finite height Kisin modules over $\Sig_F$ to Galois-stable $\oh_F$-lattice representations $T\subset V$ inside our crystalline representations. However it is not apparent how a Kisin module as defined arises from a given weakly admissible filtered $\varphi$-module. It turns out that we require an intermediate category which is equivalent to filtered $\varphi$-modules and captures a notion of `rational' Kisin modules. Indeed, by extending scalars we arrive at functor
\[\Mod_{\Sig_F}^{\varphi,\le h}\rightarrow\Mod_{\Lambda_F}^{\varphi,\le h}\]
and a notion of a finite height Kisin module over $\Lambda_F$. We will commonly reserve $\Mf$ to denote a Kisin module over $\Sig_F$ and $\Ms$ to denote a Kisin module over a larger $\varphi$-stable coefficient ring $\Sig_F\subset R$ such as but not limited to $\Lambda_F$. 

\begin{defin}\label{descent-def}
	Let $R$ be a $\varphi$-stable ring such that $\Sig_F\subset R$. When we can write a finite height Kisin module over $R$ by $\Ms=\Mf\otimes_{\Sig_F}R$ then we call $\Mf$ a \textit{descent} of $\Ms$ to $\Sig_F$. 
\end{defin}

By \cite[Proposition~1.2.8]{Kis06}, there is a functor \[\Mod_{\Lambda_F}^{\varphi,\le h}\rightarrow\MF_{F\otimes_{\Qp}K}^{\varphi}.\] 
As topological spaces, the above functor maps $\Ms\mapsto \Ms/u\Ms$ with the induced $\varphi$ action and with filtration structure induced by $\Ms$ being more complicated.\footnote{The formal definition is not used in this article so we will exclude it here, see \cite[Rem 1.2.7]{Kis06} for the proper statement} However, this functor does not define an equivalence as the category $\Mod_{\Lambda_F}^{\varphi,\le h}$ is too large. We instead need to cut out a subcategory inside $\Mod_{\Lambda_F}^{\varphi,\le h}$ to attain the equivalence we require.

\begin{rem}\label{kisin-semistable-rem}
	To capture the notion of \textit{semistable} representations with nonzero monodromy, see Remark \ref{semistable-rem}, we need only add the existence of an $(F\otimes_{\Q_p}K_0)$-linear monodromy operator $N_\Mf$ on $\Mf/u\Mf\otimes_{\oh_F}F$ satisfying $N_\Mf\circ\varphi_{\Mf}=p\varphi_{\Mf}\circ N_{\Mf}$ to Definition \ref{kisin-mod-def}. The category of finite height Kisin modules with monodromy is denoted $\Mod_{\Sig_F}^{\varphi,N,\le h}$. This gives an extension of scalars functor $\Mod_{\Sig_F}^{\varphi,N,\le h}\rightarrow\Mod_{\Lambda_F}^{\varphi,N,\le h}$ and Kisin then describes a functor $\Mod_{\Lambda_F}^{\varphi,N,\le h}\rightarrow\MF_{F\otimes_{\Qp}K}^{\varphi,N}$ into the category of filtered $(\varphi,N)$-modules.
\end{rem}

\begin{defin}\label{kisin-rational-def}
	A finite height $(\varphi,N_\nabla)$-module $\Ms$ over $\Lambda_F$ is a finite height Kisin module over $\Lambda_F$ admitting a differential operator $N_\nabla^{\Ms}$ satisfying the relation \[N_\nabla^\Ms\circ\varphi_\Ms=E\varphi_\Ms\circ N_\nabla^\Ms\] such that $N_\nabla^\Ms|_{u=0}=N_\Ms$.
\end{defin}

\noindent The category of $(\varphi,N_\nabla)$-modules over $\Lambda_F$ of height $\le h$ is denoted $\Mod_{\Lambda_F}^{\varphi,N_\nabla,\le h}$ and it is naturally a proper subcategory $\Mod_{\Lambda_F}^{\varphi,N_\nabla,\le h}\subset\Mod_{\Lambda_F}^{\varphi,N,\le h}$. An effective criteria for the existence of such an operator $N_\nabla$ on a Kisin module $\Ms\in\Mod_{\Lambda_F}^{\varphi,N,\le h}$ is called the \textit{monodromy condition}, see \cite[\S 2.2]{BL20} for details. Restricting to the category of $(\varphi,N_\nabla)$-modules will give us the desired equivalence.

\begin{thm}\label{kisin-equiv-thm}
	There is an equivalence of categories
	\[\D_{\Kis}:\Mod_{\Lambda_F}^{\varphi,N_\nabla}\rightarrow \MF_{F\otimes_{\Qp}K}^{\varphi,N}\]
	with quasi-inverse functor denoted $\M_\Kis$.
	\begin{proof}
		When $F=\Qp$ this is \cite[Theorem~1.2.15]{Kis06} . For general $F$, see \cite[Remark~2.2]{BLL22}.
	\end{proof}
\end{thm}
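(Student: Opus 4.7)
The plan is to bootstrap from Kisin's equivalence in the case $F=\Qp$ by exhibiting the extra $F$-action on both sides as a system of commuting endomorphisms that is preserved by the functors. On the left, an object of $\Mod_{\Lambda_F}^{\varphi,N_\nabla}$ is precisely an object of $\Mod_{\Lambda}^{\varphi,N_\nabla}$ equipped with a commuting action of $\oh_F$ (or equivalently $F$ after inverting $p$) by endomorphisms of $\Ms$ respecting $\varphi_\Ms$ and $N_\nabla^\Ms$. On the right, an object of $\MF_{F\otimes_{\Qp}K}^{\varphi,N}$ is an object of $\MF_{K_0}^{\varphi,N}$ together with a commuting $F$-action compatible with $\varphi$, $N$, and the filtration.

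First I would verify that $\D_\Kis$ from Kisin's original theorem is natural in endomorphisms of $\Ms$. Since $\D_\Kis(\Ms)=\Ms/u\Ms$ (as a vector space) with $\varphi$, $N$, and filtration induced from $\Ms$, any $\oh_F$-action on $\Ms$ descends to $\D_\Kis(\Ms)$ and commutes with the induced structures. Thus $\D_\Kis$ extends to a functor $\Mod_{\Lambda_F}^{\varphi,N_\nabla}\to\MF_{F\otimes_{\Qp}K}^{\varphi,N}$. The same naturality principle applied to Kisin's quasi-inverse $\M_\Kis$ produces a functor in the opposite direction: given $D\in\MF_{F\otimes_{\Qp}K}^{\varphi,N}$, forget the $F$-structure, apply $\M_\Kis$, and transport the $\oh_F$-action via the functoriality of $\M_\Kis$.

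Next I would verify the equivalence. The unit $\Ms\to\M_\Kis(\D_\Kis(\Ms))$ and counit $\D_\Kis(\M_\Kis(D))\to D$ from Kisin's theorem over $\Qp$ are isomorphisms in $\Mod_\Lambda^{\varphi,N_\nabla}$ and $\MF_{K_0}^{\varphi,N}$ respectively, and by naturality they automatically intertwine the two $\oh_F$-actions. Hence they are also isomorphisms in the $F$-linear categories, which establishes the equivalence.

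The main obstacle I expect is checking compatibility of the $F$-structure with the monodromy data: one must confirm that the operator $N_\nabla^\Ms$ on $\Ms$ is $\oh_F$-linear (as opposed to just $\Zp$-linear) and that the relation $N_\nabla^\Ms\circ\varphi_\Ms=E\varphi_\Ms\circ N_\nabla^\Ms$ survives in the $F$-linear setting. This follows from the uniqueness of $N_\nabla^\Ms$ (when it exists, it is the unique derivation extending $N_\Ms$ and satisfying the twisted commutation with $\varphi_\Ms$) together with the fact that the $\oh_F$-action is by $\varphi_\Ms$-equivariant endomorphisms: $\oh_F$-conjugation of $N_\nabla^\Ms$ produces another operator satisfying the same properties, hence equals $N_\nabla^\Ms$. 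Once this uniqueness-and-naturality argument is in place, the remaining verifications — that $\D_\Kis$ lands in the prescribed category, that filtrations are $F$-stable, and that the unit/counit are $F$-linear — are formal, so the theorem reduces cleanly to Kisin's original equivalence.
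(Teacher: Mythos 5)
Your proposal is correct and follows essentially the same route as the paper, which proves the statement simply by citing Kisin's theorem for $F=\Qp$ and delegating the passage to general $F$ to \cite[Remark~2.2]{BLL22}; your bootstrap of the $F$-structure via functoriality of $\D_\Kis$ and $\M_\Kis$, together with the uniqueness argument showing $N_\nabla^\Ms$ commutes with the $F$-action, is precisely the content of that delegated step. The only detail worth a word is that a finite free $\Lambda$-module with a commuting $F$-action is automatically finite free over $\Lambda_F$: the Frobenius permutes the idempotent factors of $F\otimes_{\Qp}K$ transitively (as $K/\Qp$ is unramified), which forces the ranks over all factors to agree.
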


\noindent Here we have that the functor $\D_\Kis$ defines monodromy on its image by $N=N_\nabla|_{u=0}$. To recover the crystalline case, it follows that we need only consider those $(\varphi,N_\nabla)$-modules where $N_\nabla|_{u=0}=0$.

As hinted at prior, Kisin modules over $\Sig_F$ may be used to compute reductions of crystalline (or more generally semi-stable) representations. This is done via a canonical association of certain Kisin modules to Galois stable $\oh_F$ lattices inside such representations. Let $V\in\Rep_{\cris/F}(G_K)$ be a crystalline representation and recall that $G_\infty=\Gal(\overline{\Q}_p/K_\infty)$. For any $s>0$, we let $\Lambda_{F,s}$ denote the $\Lambda_F$-algebra of rigid analytic functions converging over $|u|<p^{-s}$. The following proposition allows us to make the aforementioned association.

\begin{prop}\label{kis-lattice-ident-prop}
	For $\Mf\in\Mod_{\Sig_F}^{\varphi,\le h}$ and $V\in\Rep_{\cris/F}(G_K)$, suppose there exists $s$ such that $1/p<s<1$ and
	\[\Mf\otimes_{\Sig_F}\Lambda_{F,s}\cong \M_\Kis(\D_\cris^*(V))\otimes_{\Lambda_F}\Lambda_{F,s}\]
	in $\Mod_{\Lambda_{F,s}}^{\varphi,\le h}$. Then $\Mf=\Mf(T)$ is canonically associated to some $G_\infty$-stable $\oh_F$-lattice $T\subset V$.
	\begin{proof}
		See \cite[Prop. 2.1]{BLL22}.
	\end{proof}
\end{prop}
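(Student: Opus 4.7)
The plan is to pass through étale $\varphi$-modules: first produce a $G_\infty$-stable $\oh_F$-lattice $T$ canonically from $\Mf$ via Fontaine's equivalence, then use the hypothesis to identify its generic fiber with $V|_{G_\infty}$, which places $T$ inside $V$. To begin, one base-changes $\Mf$ from $\Sig_F$ to $\oh_\mathcal{E}$, the $p$-adic completion of $\Sig_F[1/u]$. Since $\Mf$ has $E$-height $\le h$ and $E = u + p$ becomes a unit in $\oh_\mathcal{E}$, the linearization $1 \otimes \varphi_{\Mf}$ becomes an isomorphism after this base change, making $\Mf \otimes_{\Sig_F} \oh_\mathcal{E}$ an étale $\varphi$-module. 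Fontaine's classical equivalence then produces a continuous representation of $G_\infty$ on a finite free $\oh_F$-module $T$, canonically attached to $\Mf$.

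The next task is to identify $T \otimes_{\oh_F} F$ with $V|_{G_\infty}$ as $F[G_\infty]$-modules. Applying the analogous construction to the rational Kisin module $\M_\Kis(\D_\cris^*(V))$—that is, extending scalars from $\Lambda_F$ to $\mathcal{E} \coloneqq \oh_\mathcal{E}[1/p]$—recovers exactly the étale $\varphi$-module over $\mathcal{E}$ associated to $V|_{G_\infty}$ by the rational Fontaine equivalence; this is precisely the compatibility between the Kisin functor $\M_\Kis$ on $\Lambda_F$-modules and Fontaine's classification of $G_\infty$-representations. Thus it suffices to produce a $\varphi$-equivariant isomorphism between the two étale $\varphi$-modules over $\mathcal{E}$.

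This is where the hypothesis enters. Both Kisin modules live naturally over $\Lambda_{F,s}$ (since $\Sig_F \subset \Lambda_F \subset \Lambda_{F,s}$), and by assumption they are isomorphic there. One would then exhibit a $\varphi$-equivariant map $\Lambda_{F,s} \to \mathcal{E}$—obtained by interpreting an element of $\Lambda_{F,s}$ as a convergent power series, inverting $u$, and $p$-adically completing—and tensor the given $\Lambda_{F,s}$-isomorphism along this map. This produces the desired $\mathcal{E}$-isomorphism of étale $\varphi$-modules, whereupon Fontaine's equivalence yields the $F[G_\infty]$-isomorphism $T \otimes_{\oh_F} F \cong V|_{G_\infty}$, realizing $T$ as the sought $G_\infty$-stable lattice inside $V$.

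The main obstacle is the careful construction and $\varphi$-compatibility of the comparison map $\Lambda_{F,s} \to \mathcal{E}$, and this is exactly where the hypothesis $1/p < s < 1$ is essential: the condition $s < 1$ keeps us inside the open unit disc on which $\varphi(u) = u^p$ and Kisin theory operate, while $s > 1/p$ ensures that $\varphi$ maps $\Lambda_{F,s}$ into a subring whose $u$-adic localization and $p$-adic completion interact cleanly with $\oh_\mathcal{E}$. The delicate interplay among the formal ring $\Sig_F$, the analytic rings $\Lambda_F$ and $\Lambda_{F,s}$, and the completed localization $\oh_\mathcal{E}$ constitutes the technical heart that must be verified in detail; everything else is a formal consequence of Fontaine's and Kisin's equivalences.
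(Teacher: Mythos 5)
The paper itself disposes of this statement by citing \cite[Prop.\ 2.1]{BLL22}, so the real comparison is between your outline and the argument given there; unfortunately your outline has a genuine gap at its pivot point. The comparison map you propose, a $\varphi$-equivariant ring homomorphism $\Lambda_{F,s}\rightarrow\Os_{\varepsilon,F}[1/p]$ obtained by ``interpreting an element of $\Lambda_{F,s}$ as a convergent power series, inverting $u$, and $p$-adically completing,'' does not exist. A rigid function on the disc $|u|<p^{-s}$ may have coefficients whose $p$-adic valuations tend to $-\infty$ (for instance $\sum_n p^{-\lfloor sn/2\rfloor}u^n$ lies in $\Lambda_{F,s}$), whereas every element of $\Os_{\varepsilon,F}[1/p]$ is a Laurent series with coefficient valuations bounded below; so there is no ring map extending $u\mapsto u$, and likewise no base change ``from $\Lambda_F$ to $\Os_{\varepsilon,F}[1/p]$'' as invoked in your second step. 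Consequently the step where you claim that extending $\M_\Kis(\D_\cris^*(V))$ to the \'etale side ``recovers exactly'' Fontaine's module of $V|_{G_\infty}$, as a formal compatibility, is not even well posed as stated; the correct statement -- that the canonical Kisin module of a lattice $T\subset V$ has $\Lambda_F$-fiber $\M_\Kis(\D_\cris^*(V))$ and $\Os_{\varepsilon,F}$-fiber the \'etale $\varphi$-module of $T$ -- is precisely the nontrivial content of Kisin's theory and of the cited proposition, so your argument assumes the hard part.

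What the actual proof does instead is compare the two sides through period rings rather than through a map between coefficient rings: one evaluates $u$ at the Teichm\"uller lift $[\pi]$ of the fixed compatible system of $p$-power roots of $\pi_K$, landing in $A_{\inf}$, resp.\ $\B_\cris^+$, and shows that the $G_\infty$-representation attached to a finite-height Kisin module only depends on its base change to $\Lambda_{F,s}$ for $1/p<s<1$. This is exactly where the numerical hypothesis enters: for $s<1$ the disc $|u|<p^{-s}$ contains the zero of $E=u+p$ (so the finite-height structure is seen there), for $s>1/p$ it misses the zeros of $\varphi(E)$ (so $\varphi^n(E)$ is invertible for $n\ge1$ and the relevant products such as $\prod_{n\ge0}\varphi^n(E/p)$ behave well), and the evaluation $u\mapsto[\pi]$ converges on such functions. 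Your first step (Fontaine's equivalence applied to $\Mf\otimes_{\Sig_F}\Os_{\varepsilon,F}$, producing a canonical $G_\infty$-lattice $T$) is correct and is also the starting point of the real argument, but the identification of $T\otimes_{\oh_F}F$ with $V|_{G_\infty}$ cannot be reduced to a change-of-rings along a nonexistent map; it requires the $\B_\cris^+$-comparison (or an appeal to \cite[Prop.\ 2.1]{BLL22} and the results of \cite{Kis06} behind it).
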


Let $\Os_{\varepsilon,F}$ denote the $p$-adic completion of $\Sig_{F}[1/u]$ with Frobenius action extended from $\Sig_{F}[1/u]$ by continuity. Then $\Os_{\varepsilon,F}$ is a semi-local ring with $\Os_{\varepsilon,F}\otimes_{\oh_F}k_F=k_F(\!(u)\!)$.

\begin{defin}\label{et-phi-mod-def}
	An \textit{\'{e}tale $\varphi$-module} over $\Os_{\varepsilon,F}$ is a finite dimensional $\Os_{\varepsilon,F}$-module $M$ such that:
	\begin{enumerate}
		\item $M$ admits a $\varphi$-semilinear Frobenius endomorphism $\varphi_M:M\rightarrow M$;
		\item The linearization $\varphi_M^*=\Os_{\varepsilon,F}\otimes_{\varphi_M,\Os_{\varepsilon,F}}M\rightarrow M:c\otimes m\mapsto c\varphi_M(m)$ is an isomorphism.
	\end{enumerate}
\end{defin}

\noindent 	We denote the category consisting of such objects by $\Mod_{\Os_{\varepsilon,F}}^{\varphi,\et}$ with morphisms being linear maps compatible with the Frobenius endomorphisms. By replacing $\Os_{\varepsilon,F}$ with $k_F(\!(u)\!)$, we receive the category of \'{e}tale $\varphi$-modules over $k_F(\!(u)\!)$ denoted $\Mod_{k_F(\!(u)\!)}^{\varphi,\et}$. The following theorem of Fontaine shows how the absolute Galois group $G_\infty$ appears in the theory.

\begin{thm}\label{equiv-et-phi-thm}
	There exists covariant equivalence functors
	\begin{align*}
		\V_{\oh_F}&:\Mod_{\Os_{\varepsilon,F}}^{\varphi,\et}\rightarrow\Rep_{/\oh_F}(G_\infty)&\V_{k_F}&:\Mod_{k_F(\!(u)\!)}^{\varphi,\et}\rightarrow\Rep_{/k_F}(G_\infty)
	\end{align*}
	satisfying the following compatibility relation for $M\in\Mod_{\Os_{\varepsilon,F}}^{\varphi,\et}$,
	\[\V_{\oh_F}(M)\otimes_{\oh_F} k_F\cong\V_{k_F}(M\otimes_{\oh_F} k_F).\]
	\begin{proof}
		This is a summary of results from \cite[\S~A1]{Fon91}
	\end{proof}
\end{thm}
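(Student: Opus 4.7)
The plan is to follow Fontaine's original approach, which rests crucially on the Fontaine-Wintenberger theory of the field of norms. First, I would identify the absolute Galois group $G_\infty$ with the absolute Galois group of the imperfect local field $k(\!(u)\!)$: because the tower $K_\infty/K$ obtained from the $p$-power compatible sequence $\pi$ is strictly arithmetically profinite, its field of norms is canonically isomorphic to $k(\!(u)\!)$, and this identification intertwines the natural Galois actions so that $G_\infty \cong \Gal(k(\!(u)\!)^\sep/k(\!(u)\!))$. In parallel, I would construct the relevant period-style ring $\Os_{\varepsilon,F}^\unr$ as the $p$-adic completion of the ring of integers in the maximal unramified extension of $\Frac(\Os_{\varepsilon,F})$ whose residue field is $k_F(\!(u)\!)^\sep$. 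This ring carries a canonical Frobenius $\varphi$ extending that of $\Os_{\varepsilon,F}$ and a continuous $G_\infty$-action via the field-of-norms identification. The crucial structural properties to verify are
\[(\Os_{\varepsilon,F}^\unr)^{\varphi = 1} = \oh_F, \qquad (\Os_{\varepsilon,F}^\unr)^{G_\infty} = \Os_{\varepsilon,F}, \qquad \Os_{\varepsilon,F}^\unr/\varpi\Os_{\varepsilon,F}^\unr \cong k_F(\!(u)\!)^\sep,\]
together with faithful flatness of $\Os_{\varepsilon,F}^\unr$ over $\Os_{\varepsilon,F}$.

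With this ring in hand, I would define
\[\V_{\oh_F}(M) \coloneqq \bigl(\Os_{\varepsilon,F}^\unr \otimes_{\Os_{\varepsilon,F}} M\bigr)^{\varphi = 1},\]
which inherits a continuous $G_\infty$-action from the first tensor factor; the analogous formula with $k_F(\!(u)\!)^\sep$ in place of $\Os_{\varepsilon,F}^\unr$ gives $\V_{k_F}$ on the residual side. The quasi-inverse sends a continuous representation $T$ to $(\Os_{\varepsilon,F}^\unr \otimes_{\oh_F} T)^{G_\infty}$, with Frobenius acting on the first factor and étaleness following from the structure of $\Os_{\varepsilon,F}^\unr$. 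The verification that these are mutually quasi-inverse amounts to establishing the comparison isomorphism
\[\Os_{\varepsilon,F}^\unr \otimes_{\oh_F} \V_{\oh_F}(M) \xrightarrow{\sim} \Os_{\varepsilon,F}^\unr \otimes_{\Os_{\varepsilon,F}} M,\]
a Galois-descent statement of Hilbert 90 type. My plan for this step is to reduce, via a standard dévissage along the $\varpi$-adic filtration, to its residual analogue over $k_F(\!(u)\!)^\sep$, which is the classical fact that every étale $\varphi$-module over $k_F(\!(u)\!)^\sep$ admits a $\varphi$-invariant basis.

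The compatibility $\V_{\oh_F}(M)\otimes_{\oh_F}k_F \cong \V_{k_F}(M\otimes_{\oh_F}k_F)$ then follows directly from the third displayed property together with the fact that $M$ is finite free over $\Os_{\varepsilon,F}$: tensoring with a flat object commutes with reduction modulo $\varpi$, and the $\varphi = 1$ functor commutes with such reductions once one knows the comparison isomorphism above (so that $\varphi$-invariants are computed from a free module over $\Os_{\varepsilon,F}^\unr$, where invariants commute with scalar extension along $\oh_F \to k_F$).

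The principal obstacle is the construction and verification of the structural properties of $\Os_{\varepsilon,F}^\unr$, together with the comparison isomorphism — both of which hinge on the field-of-norms theorem and its compatibility with $\varphi$-structures. Once that input is granted, all remaining steps (checking the functors are well-defined, quasi-inverse, and compatible with reduction modulo $\varpi$) are essentially formal and reduce to tracking Frobenius- and Galois-invariants through finite free modules.
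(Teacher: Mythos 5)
Your proposal is exactly the classical Fontaine argument (field of norms, the unramified period ring $\Os_{\varepsilon,F}^\unr$, $\varphi$-invariants, and d\'evissage mod $\varpi$), which is precisely the content of \cite[\S~A1]{Fon91} that the paper's proof simply cites. So the approach is essentially the same as the paper's; the only difference is that you spell out the cited construction rather than invoking it.
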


Returning to the Kisin module settings, given $\Mf\in\Mod_{\Sig_F}^{\varphi,\le h}$ then $\Mf\otimes_{\Sig_F}\Os_{\varepsilon,F}$ defines an \'{e}tale $\varphi$-module over $\Os_{\varepsilon,F}$ as upon inverting $u$, the polynomial $E(u)$ becomes invertible so the finite height condition on $\Mf$ implies the \'{e}tale condition on $\Mf\otimes_{\Sig_F}\Os_{\varepsilon,F}$. In particular, by the results of Theorem \ref{equiv-et-phi-thm}, we have
\begin{align*}
	\V_{\oh_F}(\Mf\otimes_{\Sig_F}\Os_{\varepsilon,F})&\in\Rep_{/\oh_F}(G_\infty)&\V_{k_F}(\Mf\otimes_{\oh_F}k_F[1/u])&\in\Rep_{/k_F}(G_\infty).
\end{align*}

\begin{prop}\label{det-red-prop}
	Let $\Mf\in\Mod_{\Sig_F}^{\varphi,\le h}$ and $V\in\Rep_{\cris/F}(G_K)$ be such that $\Mf=\Mf(T)$ is the canonically associated Kisin module to a $G_\infty$-stable $\oh_F$-lattice $T\subset V$ from Proposition \ref{kis-lattice-ident-prop}. Then \[(\V_{k_F}^*(\Mf\otimes_{\oh_F}k_F[1/u]))^\semi\cong \overline{V}|_{G_\infty}\]
	\begin{proof}
		By assumption we know that $\overline{\V_\cris^*(\D_\Kis(\Mf\otimes_{\Sig_F}\Lambda_F))}=\overline{V}$ is a semi-simple reduction. On the other hand, since the restriction $\res:\Rep_{\cris/F}(G_K)\rightarrow\Rep_{\cris/F}(G_\infty)$ is a fully faithful functor by \cite[Cor. 2.1.14]{Kis06} then semi-simplicity if preserved so to prove the proposition, we need only show that $\overline{\V_\cris^*(\D_\Kis(\Mf\otimes_{\Sig_F}\Lambda_F))}|_{G_\infty}\cong (\V_{k_F}^*(\Mf\otimes_{\oh_F}k_F[1/u]))^\semi$.
		
		We may write
		\begin{align*}
			\overline{\V_\cris^*(\D_\Kis(\Mf\otimes_{\Sig_F}\Lambda_F))}|_{G_\infty}&=((\V_{\oh_F}^*(\Mf\otimes_{\Sig_F}\Os_{\varepsilon,F})\otimes_{\oh_F}k_F)^\semi \\
			&\cong (\V_{k_F}^*(\Mf\otimes_{\Sig_F}(\Os_{\varepsilon,F}\otimes_{\oh_F}k_F))^\semi \\
			&=(\V_{k_F}^*(\Mf\otimes_{\Sig_F}k_F(\!(u)\!))^\semi
		\end{align*}
		with the isomorphism in the second line following from Theorem \ref{equiv-et-phi-thm}. Since $\Mf$ is a finite height Kisin module then we can write $(\V_{k_F}^*(\Mf\otimes_{\Sig_F}k_F(\!(u)\!))^\semi=(\V_{k_F}^*(\Mf\otimes_{\oh_F}k_F[1/u]))^\semi$ and the proposition follows.
	\end{proof}
\end{prop}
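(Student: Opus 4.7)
The plan is to transfer the semi-simple mod $\varpi$ reduction $\overline{V}$ through the chain of functors connecting $V$ to the \'etale $\varphi$-module $\Mf \otimes_{\oh_F} k_F[1/u]$, using the canonical association $\Mf = \Mf(T)$ to match the integral lattices on the two sides. The two workhorses are the fully-faithful restriction $\res : \Rep_{\cris/F}(G_K) \to \Rep_{\cris/F}(G_\infty)$ of \cite[Cor.~2.1.14]{Kis06}, which commutes with semi-simplification, and the base change compatibility between $\V_{\oh_F}$ and $\V_{k_F}$ recorded in Theorem \ref{equiv-et-phi-thm}.

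In detail, I would first use full-faithfulness of restriction to rewrite $\overline{V}|_{G_\infty} = (T/\varpi T)^{\semi}|_{G_\infty}$ as $\bigl((T/\varpi T)|_{G_\infty}\bigr)^{\semi}$, so that the semi-simplification is performed after restriction to $G_\infty$. Next, the canonical association of Proposition \ref{kis-lattice-ident-prop} should provide a $G_\infty$-equivariant identification $\V_{\oh_F}^*(\Mf \otimes_{\Sig_F} \Os_{\varepsilon,F}) \cong T|_{G_\infty}$ of $\oh_F$-lattice representations; reducing this modulo $\varpi$ and applying the base change formula $\V_{\oh_F}^*(M)\otimes_{\oh_F}k_F \cong \V_{k_F}^*(M\otimes_{\oh_F}k_F)$ gives
\[
(T/\varpi T)|_{G_\infty} \;\cong\; \V_{k_F}^*\bigl(\Mf \otimes_{\Sig_F} k_F(\!(u)\!)\bigr).
\]
Finally, I would invoke the finite height hypothesis: modulo $\varpi$ the Eisenstein polynomial $E(u) = u+p$ becomes $u$, so the cokernel of $1 \otimes \varphi_{\Mf}$ on $\Mf \otimes_{\oh_F} k_F$ is killed by $u^h$; hence inverting $u$ on $\Mf \otimes_{\oh_F} k_F$ already produces an \'etale $\varphi$-module, and the two expressions $\Mf \otimes_{\Sig_F} k_F(\!(u)\!)$ and $\Mf \otimes_{\oh_F} k_F[1/u]$ have canonically isomorphic images under $\V_{k_F}^*$. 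Taking semi-simplifications throughout delivers the claimed isomorphism.

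The main obstacle I anticipate is the bridge between the $\Lambda_{F,s}$-level identification supplied by Proposition \ref{kis-lattice-ident-prop} and the $\Os_{\varepsilon,F}$-level identification that my argument actually invokes --- that is, promoting a rational analytic comparison to an integral \'etale one that matches $T|_{G_\infty}$ precisely, rather than merely up to isogeny. Once this bridge is in place, the remaining manipulations are formal consequences of the base change in Theorem \ref{equiv-et-phi-thm} together with the degeneration of $E(u)$ to $u$ modulo $\varpi$.
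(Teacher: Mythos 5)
Your proposal is correct and follows essentially the same route as the paper's proof: full faithfulness of restriction to $G_\infty$ to control semi-simplification, the canonical association to identify $T|_{G_\infty}$ with $\V_{\oh_F}^*(\Mf\otimes_{\Sig_F}\Os_{\varepsilon,F})$, the base change compatibility of Theorem \ref{equiv-et-phi-thm}, and finite height to pass from $k_F(\!(u)\!)$ to $k_F[1/u]$. The ``obstacle'' you flag is in fact absorbed into the hypothesis that $\Mf=\Mf(T)$ is canonically associated in the sense of Proposition \ref{kis-lattice-ident-prop} (via \cite[Prop.~2.1]{BLL22}), which is precisely the integral $\Os_{\varepsilon,F}$-level identification the argument needs.
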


With this, we now have a method to compute reductions of crystalline representations $V$ so long as we can identify the Kisin module associated to some Galois stable $\oh_F$-lattice $T\subset V$. Unfortunately, finding such a Kisin module is generally very difficult. This motivates the need for an auxiliary category that allows us to translate the data of strongly divisible lattices in weakly admissible filtered $\varphi$-modules into Kisin modules. The trade off will be that this can only be done over more complicated coefficients.

Recall that $K_0\pu$ comes equipped with a Frobenius $\varphi$ and define an operator $N=-u\frac{d}{du}$ on $K_0\pu$. Letting $S\coloneqq W(k)[\![u, \frac{E^p}{p}]\!]=\Sig[\![\frac{E^p}{p}]\!]$, we may notice that $S$ is closed under both $\varphi$ and $N$. We also endow $S$ with a filtration structure given by $\Fil^j S=E^j S$. Extending scalars to $F$, define $S_F\coloneqq S\otimes_{\Zp} F$ and extend the the structures of $\varphi$, $N$, and $\Fil^\bullet$ by $F$-linearity. 

\begin{defin}\label{breuil-mod-def}
	A \textit{Breuil module} $\Ds$ over $S_F$ is a $\varphi$-module over $S_F$ such that
	\begin{itemize}
		\item The linearization of $\varphi_{\Ds}$ is an isomorphism;
		\item $\Ds$ is equipped with a decreasing filtration $\Fil^\bullet \Ds$ with $\Fil^0\Ds=\Ds$ and $\Fil^j S_F\cdot\Fil^\ell \Ds\subseteq\Fil^{j+\ell}\Ds$;
		\item $\Ds$ is equipped with a derivation $N_{\Ds}$ over $N$ such that both $N_{\Ds}\varphi_{\Ds}=p\varphi_{\Ds}N_{\Ds}$ and $N_{\Ds}(\Fil^j\Ds)\subseteq \Fil^{j-1}\Ds$.
	\end{itemize}
\end{defin}

\noindent We denote the category of Breuil modules over $S_F$ by $\MF_{S_F}^{\varphi,N}$ with morphisms being $S_F$-linear maps compatible with the $\varphi$, $N$, and $\Fil^\bullet$ structures.

Consider the functor
\[\D_\Bre:\MF_{F\otimes_{\Qp}K}^{\varphi,N}\rightarrow\MF_{S_F}^{\varphi,N}\]
defined by $\D_\Bre(D)\coloneqq S_F\otimes_{K_0\otimes_{\Q_p}F} D=\Ds$ with $\varphi_\Ds=\varphi\otimes\varphi_D$, $N_\Ds=N\otimes 1+1\otimes N_D$ and 
\[\Fil^j\Ds=\{x\in\Ds:N_\Ds(x)\in \Fil^{j-1}\Ds \text{ and }(\eval_\pi\otimes 1)(x)\in\Fil^j D\}\]
where $\eval_\pi:S_F\rightarrow F\otimes_{\Qp}K$ is the scalar extension of the evaluation at $\pi_K$ map $\eval_\pi:W(k)[u]\rightarrow\oh_K$.

\begin{thm}\label{bruel-equiv-thm}
	The functor $\D_\Bre:\MF_{F\otimes_{\Qp}K}^{\varphi,N}\rightarrow\MF_{S_F}^{\varphi,N}$ defines an equivalence of categories.
	\begin{proof}
		See \cite[Theorem 2.3]{BLL22}
	\end{proof}
\end{thm}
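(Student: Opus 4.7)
The plan is to construct an explicit quasi-inverse $\M_\Bre$ and verify that both compositions are naturally isomorphic to the identity functor. On objects, I would set $\M_\Bre(\Ds) := \Ds / u\Ds$. The relation $E = u+p$ together with the $(E^p/p)$-adic convergence give a well-defined surjection $S_F \twoheadrightarrow F \otimes_{\Qp} K_0$ sending $u \mapsto 0$ (so that $E^p/p \mapsto p^{p-1}$, which is topologically nilpotent for $p \geq 2$), so $\Ds/u\Ds$ inherits an $F \otimes_{\Qp} K_0$-module structure of the correct rank. The Frobenius descends, the operator $N_D := N_\Ds \bmod u$ gives a monodromy satisfying $N_D\varphi_D = p\varphi_D N_D$ (since the $-u\cdot d/du$ part of $N$ on $S_F$ vanishes mod $u$), and I would take $\Fil^j D$ to be the image of $\Fil^j \Ds$ under the quotient.

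First I would verify $\M_\Bre \circ \D_\Bre \cong \mathrm{id}$. Starting with a filtered $(\varphi,N)$-module $D$, the composition forms $\Ds := S_F \otimes_{F \otimes_\Qp K_0} D$ and reduces mod $u$, which tautologically returns $D$ with its Frobenius and monodromy. The nontrivial check is that the image of $\Fil^j \Ds = \{x \in \Ds : N_\Ds(x) \in \Fil^{j-1}\Ds,\ (\eval_\pi \otimes 1)(x) \in \Fil^j D\}$ under $u \mapsto 0$ coincides with $\Fil^j D$. One inclusion is clear by lifting an element of $\Fil^j D$ to a constant tensor; the reverse inclusion follows by induction on $j$, using the $N_\Ds$-condition together with the fact that reduction mod $u$ and $\eval_\pi$ agree modulo $(u+p)$.

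For the other direction $\D_\Bre \circ \M_\Bre \cong \mathrm{id}$, given $\Ds$ I would form $D := \Ds/u\Ds$ and then $\Ds' := S_F \otimes_{F \otimes_\Qp K_0} D$. A natural $S_F$-linear, $\varphi$- and $N$-equivariant isomorphism $\Ds' \xrightarrow{\sim} \Ds$ is obtained by choosing a lift of an $F \otimes_\Qp K_0$-basis of $D$; the $u$-adic completeness of $S_F$ together with the $\varphi$- and $N$-compatibility conditions pin down such a lift up to isomorphism. The core task, and the main obstacle, is proving that $\Fil^j \Ds$ equals the filtration predicted by the $\D_\Bre$ recipe, i.e., that $\Fil^j \Ds$ is as small as the Breuil module axioms force it to be: generated over $S_F$ by lifts of $\Fil^j D$ subject to the $\Fil^\bullet S_F$-multiplicativity and the monodromy constraint.

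I would attack this rigidity statement by expanding a general $x \in \Fil^j \Ds$ as a $u$-adic series against a distinguished lifted basis and using that $N_\Ds$ lowers $u$-adic order by one while mapping $\Fil^j \Ds$ into $\Fil^{j-1}\Ds$, so that a descending induction on the filtration index constrains each coefficient to lie in the predicted submodule. Once this is in place, full faithfulness is essentially formal: morphisms of Breuil modules descend mod $u$ to morphisms of filtered $(\varphi,N)$-modules, and conversely any such morphism on the $D$-side extends uniquely by $S_F$-linearity to one respecting $\varphi$, $N$, and the now-pinned-down filtration. The hard part is entirely the filtration rigidity; everything else unfolds mechanically from the standard Breuil formalism.
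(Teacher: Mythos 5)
The paper does not actually prove this statement; it cites \cite[Theorem 2.3]{BLL22}, which in turn rests on Breuil's original equivalence, so your sketch can only be judged on its own terms. The overall strategy --- build a quasi-inverse by passing to a fiber of $\Ds$ over a point of the disc and then prove a ``filtration rigidity'' statement --- is the right one, but your construction of the filtration on the quasi-inverse is wrong in a way that breaks the argument. You set $\Fil^j D$ to be the image of $\Fil^j\Ds$ under $u\mapsto 0$. However, for every $j\ge 0$ and every $x\in\Ds$ one has $E^jx\in\Fil^j\Ds$ (by induction from $\Fil^jS_F\cdot\Fil^\ell\Ds\subseteq\Fil^{j+\ell}\Ds$, resp.\ from the two defining conditions of $\Fil^j\D_\Bre(D)$, since $\eval_\pi(E)=0$ and $N(E^j)=-juE^{j-1}$), and $E\equiv p\pmod{u}$ is a \emph{unit} in $S_F/uS_F\cong F\otimes_{\Qp}K_0$. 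Hence the image of $\Fil^j\Ds$ in $\Ds/u\Ds$ is all of $\Ds/u\Ds$ for every $j$, and your quasi-inverse returns the trivial filtration. The filtration must instead be read off from the fiber at $u=\pi_K$, i.e.\ from the image of $\Fil^j\Ds$ in $\Ds\otimes_{S_F,\eval_\pi}(F\otimes_{\Qp}K)$, exactly as in the definition of $\D_\Bre$.

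Once this is corrected, a genuinely nontrivial step appears that your sketch elides: the $(\varphi,N)$-structure lives on the fiber at $u=0$ while the filtration lives on the fiber at $u=\pi_K$, and for a general $\Ds$ (not a priori of the form $S_F\otimes D$) one must first produce a canonical identification of these two fibers before one even has an object of $\MF_{F\otimes_{\Qp}K}^{\varphi,N}$. Your remark that ``reduction mod $u$ and $\eval_\pi$ agree modulo $(u+p)$'' conflates the two distinct specializations $u\mapsto 0$ and $u\mapsto -p$; they do not agree, and relating them requires either the monodromy operator (a Taylor-expansion/parallel-transport argument in $u$) or Frobenius (exploiting $\varphi(E)=p\gamma$ with $\gamma\in S_F^*$). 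With that identification and the corrected filtration in hand, your ``filtration rigidity'' induction is indeed the remaining core content, and your treatment of the underlying module, of $\varphi$ and $N$, and of full faithfulness is essentially fine.
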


Since we are primarily concerned with crystalline representations, we would like to work with Breuil modules without monodromy. However, since the filtration structure is connected to monodromy for a Breuil module, we need a new definition.

\begin{defin}\label{breuil-mod-w/o-mono-def}
	A \textit{Breuil module without monodromy} over $S_F$ is a $\varphi$-module $\Ds$ over $S_F$ such that
	\begin{itemize}
		\item the linearization of $\varphi_\Ds$ is an isomorphism;
		\item There exists $h\ge0$ such that $\Ds$ contains a finite free $S_F$-submodule $\Fil^h\Ds\subseteq\Ds$ such that $\Fil^hS_F\cdot \Ds\subseteq\Fil^h \Ds$. 
	\end{itemize}
\end{defin}

\noindent We denote the category of Breuil modules without monodromy by $\MF_{S_F}^{\varphi,h}$. By \cite[Lemma 2.5]{BLL22} we then have a natural forgetful functor $\MF_{S_F}^{\varphi,N}\rightarrow\MF_{S_F}^{\varphi,h}$ which forgets the monodromy structure.

The results of Theorems \ref{kisin-equiv-thm} and \ref{bruel-equiv-thm} give rise to a threefold equivalence of categories
\[\Mod_{\Lambda_F}^{\varphi,N_\nabla}\xleftarrow{\M_\Kis}\MF_{F\otimes_{\Qp}K}^{\varphi,N}\xrightarrow{\D_\Bre}\MF_{S_F}^{\varphi,N}.\]
We have seen that the functor $\D_\Bre$ gives us a natural way to construct a Breuil module from a filtered $\varphi$-module but this avenue is unsuitable to compute reductions. On the other hand, the functor $\D_{\Kis}$ is difficult to get a hold on structurally, but once done, we have a natural way to compute reductions. Our goal is then to bridge the gap between these two categories by first using $\D_\Bre$ to construct a Breuil module without monodromy before noticing that this can really be interpreted as a finite height Kisin module over some suitably large coefficient ring.

Define the functor
\[\D_{\BK}:\Mod_{S_F}^{\varphi,\le h}\rightarrow\MF_{S_F}^{\varphi,h}\]
by setting $\D_{\BK}(\Ms)\coloneqq S_F\otimes_{\varphi,S_F}\Ms$ such that $\varphi_{\D_\BK(\Ms)}=\varphi\otimes\varphi_{\Ms}$ and 
\[\Fil^h\D_\BK(\Ms)=\{x\in\D_\BK(\Ms):(1\otimes\varphi_\Ms)(x)\in\Fil^h S_F\cdot\Ms\}.\]
By \cite[Lemma 2.5]{BLL22}, $\Fil^h\D_{\BK}(\Ms)$ is finite free over $S_F$. Let us define the element $\gamma=\varphi(E)/p\in S_F^*$. The following theorem appears as \cite[Proposition 2.6]{BLL22}.

\begin{thm}\label{BK-equiv-thm}
	The functor $\D_{\BK}:\Mod_{S_F}^{\varphi,\le h}\rightarrow\MF_{S_F}^{\varphi,h}$ defines an equivalence of categories.
	\begin{proof}
		See the proof of \cite[Proposition 2.6]{BLL22} for full faithfulness. We replicate the proof of essential surjectivity here as it motivates Remark \ref{kis-alg-rem}.
		
		Suppose we are given a Breuil module $\Ds\in\MF_{S_F}^{\varphi,h}$ and choose bases $\{e_j\}_{j=1}^d$ of $\Ds$ and $\{\eta_j\}_{j=1}^d$ of $\Fil^h\Ds$. Under these bases we may write $(\eta_1,\dots,\eta_d)=(e_1,\dots,e_d)B$ and $\varphi_{\Ds}(e_1,\dots,e_d)=(e_1,\dots,e_d)X$ where $B\in\Mat_d(S_F)$ and $X\in\GL_d(S_F)$ due to the fact that the linearization of $\varphi_\Ds$ is necessarily invertible. Since $E^h\Ds\subseteq\Fil^h\Ds$ then there exists a matrix $A\in\Mat_d(S_F)$ such that $AB=BA=E^hI_d$. As a result, since $\varphi(E)=p\gamma\in S_F^*$ then $\varphi(B)\in\GL_d(S_F)$ so that 
		\[\varphi_\Ds(\eta_1,\dots,\eta_d)=(e_1,\dots,e_d)X\varphi(B)\]
		where $X\varphi(B)\in\GL_d(S_F)$.
		
		We introduce another basis $\{\mu_j\}_{j=1}^d$ defined by $(\mu_1,\dots,\mu_d)=(e_1,\dots,e_d)X\varphi(B)\varphi(E)^{-h}$ so that 
		$\varphi_\Ds(\eta_1,\dots,\eta_d)=(\mu_1,\dots,\mu_d)\varphi(E)^h$.	We necessarily then have that \[(\eta_1,\dots,\eta_d)=(\mu_1,\dots,\mu_d)(X\varphi(B)\varphi(E)^{-h})B\]
		and the finite height condition will imply the existence of $A'\in\Mat_d(S_F)$ such that the product $A'(X\varphi(B)\varphi(E)^{-h})B=E^hI_d$. Let us construct a finite free $S_F$-module $\Ms=\oplus_{j=1}^d S_F\rho_j$ such that $\rho_j\coloneqq 1\otimes \mu_j$ and set $\varphi_\Ms(\rho_1,\dots,\rho_d)=(\rho_1,\dots,\rho_d)A'$. The above discussion shows that $\Ms$ is a finite height Kisin module in $\Mod_{S_F}^{\varphi,\le h}$ and that $\D_{\BK}(\Ms)=\Ds$ as desired.
	\end{proof}
\end{thm}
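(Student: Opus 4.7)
The plan is to prove full faithfulness and essential surjectivity separately, leveraging the explicit nature of the scalar extension $S_F \otimes_{\varphi, S_F} -$ together with the finite-height/filtration compatibility.

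For full faithfulness, I would use that $\varphi: S_F \to S_F$ is (faithfully) flat, so the functor $\varphi^*(-) = S_F \otimes_{\varphi, S_F} -$ is injective on Hom-sets when restricted to finite free modules. The content is then to show that any morphism $g : \D_\BK(\Ms) \to \D_\BK(\Ms')$ preserving Frobenius and filtration descends to a morphism $f: \Ms \to \Ms'$ in $\Mod_{S_F}^{\varphi,\le h}$. The linearization $1\otimes\varphi_\Ms:\D_\BK(\Ms)\to\Ms$ is injective (after inverting $E$) with cokernel killed by $E^h$, so it identifies $\Ms$ with a distinguished $S_F$-submodule of $\D_\BK(\Ms)[1/E]$. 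The filtration $\Fil^h\D_\BK(\Ms) = (1\otimes\varphi_\Ms)^{-1}(\Fil^hS_F\cdot\Ms) = (1\otimes\varphi_\Ms)^{-1}(E^h\Ms)$ is precisely the data needed to pin down the image $\Ms$ intrinsically, and Frobenius-compatibility of $g$ then forces the descended map $f$ to commute with $\varphi_\Ms, \varphi_{\Ms'}$.

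For essential surjectivity, the plan is a direct construction. Given $\Ds \in \MF_{S_F}^{\varphi,h}$, pick bases $\{e_j\}$ of $\Ds$ and $\{\eta_j\}$ of the finite-free submodule $\Fil^h\Ds\subseteq\Ds$, and record the basis change $\eta = eB$ with $B\in\Mat_d(S_F)$, as well as the Frobenius matrix $\varphi_\Ds(e)=eX$ with $X\in\GL_d(S_F)$ (invertible because the linearization of $\varphi_\Ds$ is an isomorphism). From $E^h\Ds\subseteq\Fil^h\Ds$ one obtains a matrix $A$ with $AB=BA=E^h I_d$, and since $\varphi(E)=p\gamma\in S_F^\times$ the matrix $\varphi(B)$ is invertible. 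I would then declare the candidate Kisin module $\Ms$ to be the free $S_F$-module on symbols $\rho_j := 1\otimes\mu_j$, where $\{\mu_j\}$ is a new basis of $\Ds$ chosen so that $\varphi_\Ds(\eta)=\mu\,\varphi(E)^h$ (making the height condition manifest). The Frobenius $\varphi_\Ms$ is defined by an appropriate matrix $A'$ derived from $A$ and the basis-change data.

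The verifications split into: (i) showing $\Ms$ lies in $\Mod_{S_F}^{\varphi,\le h}$, i.e.\ that the cokernel of $1\otimes\varphi_\Ms$ is killed by $E^h$ — which follows from the factorization $A'(X\varphi(B)\varphi(E)^{-h})B = E^h I_d$ — and (ii) constructing a canonical isomorphism $\D_\BK(\Ms)\cong\Ds$ matching Frobenius and filtration. I expect step (ii) to be the main obstacle: one must match the filtration on $\D_\BK(\Ms)$, defined as the preimage of $\Fil^hS_F\cdot\Ms$ under $1\otimes\varphi_\Ms$, with the chosen $\Fil^h\Ds$ on the nose; this amounts to showing that the span of the $\mu_j$'s under the reconstructed linearization is exactly $\Fil^h\Ds$, which requires the chain of matrix identities above to be consistent in both directions and is the delicate bookkeeping part of the argument.
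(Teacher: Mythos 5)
Your essential surjectivity argument is essentially the paper's own proof: the same choice of bases $\{e_j\}$ of $\Ds$ and $\{\eta_j\}$ of $\Fil^h\Ds$, the matrices $B$, $X$, and $A$ with $AB=BA=E^h\ident_d$, the invertibility of $\varphi(B)$ via $\varphi(E)=p\gamma\in S_F^*$, the renormalized basis $\mu$ with $\varphi_\Ds(\eta_1,\dots,\eta_d)=(\mu_1,\dots,\mu_d)\varphi(E)^h$, and the Kisin module $\Ms=\oplus_j S_F\rho_j$ with $\rho_j=1\otimes\mu_j$ and Frobenius matrix $A'$ coming from the factorization identity. For full faithfulness the paper simply cites \cite[Proposition 2.6]{BLL22}, and your flatness-plus-intrinsic-description sketch is the standard argument behind that citation, so the proposal is correct and follows essentially the same route.
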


To this point, we have seen that the functors $\D_\Kis$ and $\D_\Bre$ give us a way go to from filtered $\varphi$-modules to either Kisin or Breuil modules respectively. Additionally, the functor $\D_\BK$ allows us to switch between Kisin and Breuil modules over $S_F$. The following proposition does the job of connecting all of these functors together.

\begin{prop}\label{BK-iso-prop}
	Let $D\in\MF_{F\otimes_{\Qp}K}^{\varphi}$ be a filtered $\varphi$-module. Set $\Ds\coloneqq \D_\Bre(D)\in\MF_{S_F}^{\varphi,h}$ to be the image of $D$ under the forgetful functor forgetting monodromy and set $\Ms\coloneqq \M_\Kis(D)\otimes_{\Lambda_F}S_F\in\Mod_{S_F}^{\varphi,\le h}$. Then there is a natural isomorphism $\D_{\BK}(\Ms)\cong\Ds$.
	\begin{proof}
		See \cite[Thm 2.7]{BLL22}.
	\end{proof}
\end{prop}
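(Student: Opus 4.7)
My plan is to construct a natural $S_F$-linear isomorphism $\D_\BK(\Ms) \xrightarrow{\sim} \Ds$ and verify that it intertwines the Frobenius endomorphisms and matches the $h$-th filtration pieces, so that both sides agree as objects of $\MF_{S_F}^{\varphi,h}$. The first step is to unravel the definitions: since $\Ms = \M_\Kis(D) \otimes_{\Lambda_F} S_F$ and the Frobenius on $S_F$ extends that on $\Lambda_F$ through the inclusion $\Lambda_F \hookrightarrow S_F$, a routine manipulation of iterated tensor products yields a canonical identification
\[\D_\BK(\Ms) = S_F \otimes_{\varphi,S_F} \Ms \;\cong\; S_F \otimes_{\varphi, \Lambda_F} \M_\Kis(D) \;\cong\; S_F \otimes_{\Lambda_F} \varphi^*\M_\Kis(D),\]
where $\varphi^*\M_\Kis(D) = \Lambda_F \otimes_{\varphi,\Lambda_F} \M_\Kis(D)$. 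On the other hand, $\Ds = S_F \otimes_{K_0\otimes F} D$ with $D \cong \M_\Kis(D)/u\M_\Kis(D)$ canonically, equipped with the Frobenius $\varphi_D$ inherited from $\M_\Kis(D)$.

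Next I would construct the natural map using Kisin's characterizing properties of $\M_\Kis$: the linearization of the Frobenius on $\M_\Kis(D)$ has cokernel killed by $E^h$, the reduction modulo $u$ canonically recovers $D$ together with $\varphi_D$, and the linearization $\varphi^*D \to D$ induced by $\varphi_D$ is a $K_0\otimes F$-linear isomorphism. These data produce a natural map from $\varphi^*\M_\Kis(D)$ to $D$, which after base change along $\Lambda_F \hookrightarrow S_F$ and using the explicit description of $\M_\Kis(D)$ as a canonical $\Lambda_F$-lattice inside a model of $D$ (together with the invertibility of $\varphi(E) = p\gamma \in S_F^*$ driving the essential-surjectivity argument of Theorem \ref{BK-equiv-thm}) produces a candidate $S_F$-linear map $\D_\BK(\Ms) \to \Ds$. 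Both sides are finite free $S_F$-modules of rank $d = \dim_{F\otimes K_0} D$, so surjectivity on a $\Lambda_F$-basis of $\M_\Kis(D)$ upgrades this map to an isomorphism. Frobenius compatibility is then automatic from the fact that the Frobenius on $\M_\Kis(D)$ reduces modulo $u$ to $\varphi_D$.

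The most delicate verification is the matching of the $h$-th filtrations. On one side, $\Fil^h\D_\BK(\Ms)$ is the preimage of $\Fil^h S_F \cdot \Ms = E^h \Ms$ under the linearized Frobenius $1\otimes\varphi_\Ms$. On the other side, $\Fil^h\Ds$ is inherited from $\D_\Bre(D)$ through the monodromy-evaluation recipe and ultimately encodes $\Fil^\bullet D$ via the evaluation $\eval_\pi$ at $\pi_K$. This is where Kisin's construction of $\M_\Kis$ plays its essential role: the finite-height condition on $\M_\Kis(D)$ is calibrated precisely so that after base change to $S_F$ and evaluation at $\pi_K$, the height filtration on $\Ms$ reproduces $\Fil^\bullet D$. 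Completing this compatibility check is the main technical obstacle, and is the essential content of the computation in \cite[Thm.~2.7]{BLL22}.
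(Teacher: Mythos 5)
The paper's own proof is nothing more than the citation to \cite[Thm 2.7]{BLL22}, and your proposal is a correct structural outline of that argument (the tensor-product identification of $\D_\BK(\Ms)$ with $S_F\otimes_{\Lambda_F}\varphi^*\M_\Kis(D)$, the rank-plus-surjectivity upgrade, and the identification of the $\Fil^h$ comparison as the crux) which, like the paper, ultimately defers the decisive filtration computation to that same reference. So you take essentially the same route; the only caution is that the intermediate map cannot literally be obtained by ``reducing mod $u$ and base changing'' --- the $\varphi$-equivariant comparison between $\varphi^*\M_\Kis(D)$ and $S_F\otimes_{K_0\otimes_{\Q_p}F}D$ has to be constructed as in the reference --- but since you explicitly lean on \cite{BLL22} for that substance, this does not change the verdict.
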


\noindent We summarize the interaction of all these functors in the following diagram:
\begin{figure}[H]
	\centering
	\begin{tikzcd}
		& \MF_{F\otimes_{\Qp}K}^{\varphi} \arrow[ld, "\M_\Kis"'] \arrow[rd, "\D_\Bre"] &                                                    \\
		{\Mod_{\Lambda_F}^{\varphi,\le h}} \arrow[d, "\text{Extend}"']      &                                                                              & {\MF_{S_F}^{\varphi,N}} \arrow[d, "\text{Forget}"] \\
		{\Mod_{S_F}^{\varphi,\le h}} \arrow[rr, "\D_\BK"] &                                                                              & {\MF_{S_F}^{\varphi,h}}                           
	\end{tikzcd}
\end{figure}

\begin{rem}\label{kis-alg-rem}
	The proof of Theorem \ref{BK-equiv-thm} together with Proposition \ref{BK-iso-prop} allows us to describe an algorithm to explicitly compute a finite height Kisin module over $S_F$ from a filtered $\varphi$-module by passing first through the category of Breuil modules. In particular, we follow the steps:
	\begin{enumerate}[label=\arabic*)]
		\item Choose $S_F$-bases $\{e_1,\dots,e_d\}$ of $\Ds$ and $\{\alpha_1,\dots,\alpha_d\}$ of $\Fil^h\Ds$;
		\item Write $\varphi_\Ds(e_1,\dots,e_d)=(e_1,\dots,e_d)X$ and $(\alpha_1,\dots,\alpha_d)=(e_1,\dots,e_d)B$ for $X,B\in\Mat_d(S_F)$;
		\item Then, $\Ms$ admits an $S_F$-basis $\{\eta_1,\dots,\eta_d\}$ in which $\varphi_\Ms(\eta_1,\dots,\eta_d)=(\eta_1,\dots,\eta_d)A$ where
		\[A=E^hB^{-1}X\varphi(B)\varphi(E)^{-h}.\]
	\end{enumerate}
\end{rem}

\section{Strongly Divisible Lattices}\label{SDL-chap}

\subsection{\texorpdfstring{Lattices Inside Filtered {$\varphi$}-Modules}{Lattices Inside Filtered phi-Modules}}\label{lattice-inside-sec}

Let $D\in\MF_{F\otimes_{\Qp}K}^{\varphi}$ be a filtered $\varphi$-module over $F\otimes_{\Qp}K$. A \textit{$\varphi$-lattice} $L$ inside $D$ is an $\oh_F\otimes_{\Zp}\oh_K$-module $L$ with Frobenius action $\varphi_L$ such that
\[L\otimes_{\oh_F\otimes_{\Zp}\oh_K}(F\otimes_{\Qp}K)=D\hspace{.5cm}\text{and}\hspace{.5cm}\varphi_L(L)\subseteq L.\]
We say that $L$ has an \textit{induced filtration} from $D$ if $L$ admits a filtration structure such that
\[\Fil^j L=\Fil^j D\cap L.\]
From now on, we will always assume that any $\varphi$-lattice $L$ inside a filtered $\varphi$-module $D$ has an induced filtration. 

Recall that by Proposition \ref{fil-idem-decomp-prop}, we may decompose $D$ along the $f$ embeddings $\tau_i:K\hookrightarrow F$ so that $D=\prod_i D^{(i)}$. We would like to descend this isomorphism to the integral case which the following lemma from \cite[Lem 3.1]{Zhu09} achieves.

\begin{lem}\label{int-tensor-decomp-lem}
	The isomorphism $\theta:F\otimes_{\Q_p}K\rightarrow F^{|\tau|}$ of Lemma \ref{tensor-decomp-lem} induces an isomorphism of $\oh_F\otimes_{\Z_p}\oh_K$-modules, $\theta:\oh_F\otimes_{\Z_p}\oh_K\rightarrow\prod_{i\in\Z/f\Z}\oh_F\eqcolon \oh_F^{|\tau|}$.
	\begin{proof}
		The proof of the lemma is completely similar to that of Lemma \ref{tensor-decomp-lem} once we show that $\theta$ is an isomorphism of integral rings. $\oh_K$ is generated by a single element over $\Zp$ so there exists an irreducible polynomial $g\in\Zp[x]$ of degree $f$ such that $\oh_K\cong \Zp[x]/(g)$. Hence,
		\[\oh_F\otimes_{\Zp}\oh_K\cong\oh_F\otimes_{\Z_p}\Zp[x]/(g)\cong \oh_F[x]/(g).\]
		Since $K\subseteq F$ then $\oh_K\subseteq \oh_F$ and any roots of $g$ that lie in $\oh_K$ will lie in $\oh_F$. Thus $\oh_F[x]/(g)\cong\prod_{i\in\Z/f\Z} \oh_F$.
	\end{proof}
\end{lem}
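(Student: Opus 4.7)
The plan is to bootstrap the generic-fiber isomorphism of Lemma \ref{tensor-decomp-lem} to the integral setting by exhibiting an $\oh_F$-basis of the source in which the matrix of $\theta$ is visibly invertible over $\oh_F$.

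First, because $K/\Q_p$ is unramified of degree $f$, the residue field extension $k/\F_p$ is separable and we may choose a generator $\alpha\in\oh_K$ so that $\oh_K=\Z_p[\alpha]$; its minimal polynomial $g(x)\in\Z_p[x]$ has degree $f$, and the list $\{1,\alpha,\dots,\alpha^{f-1}\}$ is a $\Z_p$-basis of $\oh_K$, hence $\{1\otimes\alpha^j\}_{0\le j\le f-1}$ is an $\oh_F$-basis of $\oh_F\otimes_{\Z_p}\oh_K$. Since $\tau_0$ embeds $\oh_K$ into $\oh_F$, each $\tau_i=\tau_0\circ\sigma_K^{-i}$ carries $\alpha$ to a root of $g$ lying in $\oh_F$, so $\theta$ does land inside $\oh_F^{|\tau|}$.

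Next I would compute the matrix of $\theta$ with respect to the basis $\{1\otimes\alpha^j\}_j$ of the source and the idempotent basis $\{\idem_i\}_i$ of $\oh_F^{|\tau|}$: by construction this is the Vandermonde matrix $V=(\tau_i(\alpha)^j)_{i,j}$, whose determinant is $\prod_{i<j}(\tau_j(\alpha)-\tau_i(\alpha))$ and whose square is the discriminant of $g$. Because $K/\Q_p$ is unramified, the discriminant of $\oh_K/\Z_p$ is a unit in $\Z_p$, so $\det V\in \oh_F^*$ and hence $V\in \GL_f(\oh_F)$. Therefore $\theta$ is an isomorphism of finite free $\oh_F$-modules, and the fact that it respects the $\oh_F\otimes_{\Z_p}\oh_K$-action is inherited from Lemma \ref{tensor-decomp-lem}.

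The only substantive input is the identification of $\det V$ (up to sign) with a square root of the discriminant of $\oh_K/\Z_p$, together with the fact that this discriminant is a $p$-adic unit precisely because the extension $K/\Q_p$ is unramified; without this hypothesis $\theta$ would fail to be surjective onto $\oh_F^{|\tau|}$ and would yield an isomorphism only after inverting a suitable power of $p$. Everything else is formal, and the proof thus essentially reduces to the standard Vandermonde/discriminant computation.
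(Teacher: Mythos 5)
Your proof is correct, and it takes a genuinely different (and in one respect more careful) route than the paper's. The paper argues via the presentation $\oh_K\cong\Zp[x]/(g)$ and then invokes a Chinese-Remainder-type factorization $\oh_F[x]/(g)\cong\prod_i\oh_F$ once all roots of $g$ are known to lie in $\oh_F$; you instead fix the monogenic basis $\{1\otimes\alpha^j\}$ and show the matrix of $\theta$ is a Vandermonde matrix lying in $\GL_f(\oh_F)$ because its determinant squares to the discriminant of $\oh_K/\Zp$, a $p$-adic unit by unramifiedness. The two arguments rest on the same underlying fact — the differences $\tau_i(\alpha)-\tau_j(\alpha)$ are units in $\oh_F$ — but the paper's CRT step quietly needs the ideals $(x-\tau_i(\alpha))$ to be pairwise comaximal in $\oh_F[x]$, which is exactly this unit condition and is not spelled out there; your Vandermonde/discriminant computation makes the role of the unramified hypothesis explicit and shows precisely how the statement would fail (only an isomorphism after inverting a power of $p$) in the ramified case. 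The trade-off is that the paper's phrasing is shorter and matches the ring-theoretic presentation used in Lemma \ref{tensor-decomp-lem}, while yours is more self-contained and quantitative.
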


Let $L$ be a $\varphi$-lattice inside $D$ with induced filtration. The idempotents $\idem_i$ inside $F^{|\tau|}$ can be viewed as lying inside $\oh_F^{|\tau|}$. By definition of the induced filtration, we can write $L^{(i)}\coloneqq\idem_i L=D^{(i)}\cap L$. The map $\theta:L\rightarrow\prod_i L^{(i)}$ is an isomorphism of $\oh_F\otimes_{\Z_p}\oh_K$-modules by Lemma \ref{int-tensor-decomp-lem} so that we may write $\varphi_L=(\varphi_{L}^{(0)},\dots,\varphi_{L}^{(f-1)})$ with $i$-th partial Frobenius action $\varphi_L^{(i)}:\varphi^*L^{(i-1)}\rightarrow L^{(i)}$.

\begin{prop}\label{phi-lat-decomp}
	There is an isomorphism of $\varphi$-lattices with induced filtration
	\[L\cong\prod_{i\in\Z/f\Z}L^{(i)}.\]
	\begin{proof}
		$\theta$ is compatible with the filtration and Frobenius structures on $D^{(i)}$ by Proposition \ref{fil-idem-decomp-prop}. The fact that $L^{(i)}=D^{(i)}\cap L$ will imply the same is true on $L^{(i)}$.
	\end{proof}
\end{prop}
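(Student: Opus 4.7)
The plan is to leverage the rational decomposition of Proposition \ref{fil-idem-decomp-prop} and then restrict everything carefully to the lattice $L$, using the integral idempotent decomposition of Lemma \ref{int-tensor-decomp-lem}.

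First, I would note that the idempotents $\idem_i \in F^{|\tau|}$ already live inside $\oh_F^{|\tau|}$ under the inclusion $\oh_F^{|\tau|}\subset F^{|\tau|}$, and so multiplication by $\idem_i$ makes sense on any $\oh_F\otimes_{\Zp}\oh_K$-module (via the identification of Lemma \ref{int-tensor-decomp-lem}). In particular, defining $L^{(i)}=\idem_i L$ and using that $\theta$ restricts to an isomorphism of integral rings, one obtains an $\oh_F\otimes_{\Z_p}\oh_K$-module isomorphism $L \xrightarrow{\sim} \prod_i L^{(i)}$ in direct analogy with the vector space case.

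Next, I would verify that the induced filtration splits along the idempotent decomposition. Since $\Fil^j L = \Fil^j D \cap L$ by assumption, multiplying by the idempotent $\idem_i$ gives
\[\Fil^j L^{(i)} = \idem_i(\Fil^j D \cap L) = (\idem_i \Fil^j D)\cap (\idem_i L) = \Fil^j D^{(i)}\cap L^{(i)},\]
where the middle equality uses that $\idem_i$ is an idempotent, and the last equality uses Proposition \ref{fil-idem-decomp-prop}. This is exactly the induced filtration on $L^{(i)}$ viewed as a lattice inside $D^{(i)}$, so $\theta$ is compatible with filtrations.

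Finally, for Frobenius compatibility, the rational statement $\varphi_D \circ \theta = \theta \circ \varphi_D$ from Proposition \ref{fil-idem-decomp-prop} already holds on $D$; since $L \subset D$ is stable under $\varphi_L = \varphi_D|_L$ and $\theta$ sends $L$ into $\prod_i L^{(i)}$, the same identity restricts to the lattices and produces the partial Frobenii $\varphi_L^{(i)}\colon \varphi^* L^{(i-1)} \to L^{(i)}$. I do not anticipate a serious obstacle here — the only mildly delicate point is checking that multiplication by $\idem_i$ genuinely preserves the integral structure (handled by Lemma \ref{int-tensor-decomp-lem}) and that the intersection $\Fil^j D\cap L$ decomposes compatibly with the idempotents, which follows because the $\idem_i$ form an orthogonal system summing to $1$.
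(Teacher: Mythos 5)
Your argument is correct and follows essentially the same route as the paper: restrict the rational idempotent decomposition of Proposition \ref{fil-idem-decomp-prop} to $L$ using the integral isomorphism of Lemma \ref{int-tensor-decomp-lem}, and check that the induced filtration and Frobenius pass to each $L^{(i)}=\idem_i L=D^{(i)}\cap L$. The paper's proof is just a terser statement of the same verification you spell out.
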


\noindent Just as for filtered $\varphi$-modules, the results of Proposition \ref{phi-lat-decomp} allows us to pass between a $\varphi$-module $L$ and its decomposition $\prod_i L^{(i)}$ which we do so freely. Each $L^{(i)}$ may then be viewed as $\oh_F$-modules via the isomorphism between $\idem_i\oh_F$ and $\oh_F$.

An $\oh_F$-basis $\{e_1^{(i)},\dots,e_d^{(i)}\}$ of $L^{(i)}$ is said to be \textit{adapted to the induced filtration} if the set $\{e_1^{(i)},\dots,e_{\dim\Fil^jL^{(i)}}^{(i)}\}$ forms a basis for $\Fil^j L^{(i)}$ for all $j$. A filtered $\varphi$-module $D=\prod_i D^{(i)}$ is said to have a basis adapted to the filtration if it contains a $\varphi$-lattice $L=\prod_i L^{(i)}\subset D$ which has a basis adapted to the induced filtration for all $i\in\Z/f\Z$. The existence of such bases relies on the following proposition using ideas from \cite[Prop 2.1]{Zhu09}.

\begin{prop}\label{exist-adapt-basis}
	Let $D=\prod_i D^{(i)}$ be a $d$-dimensional filtered $\varphi$-module over $F\otimes_{\Qp}K$ and let $L=\prod_i L^{(i)}\subset D$ be a $\varphi$-lattice contained in $D$. Then there exists a basis of $L$ adapted to the filtration induced from $D$.
	\begin{proof}
		Without loss of generality, let us work with a fixed $i\in\Z/f\Z$. We first observe that each $\Fil^{j+1}L^{(i)}$ is a saturated submodule of $\Fil^j L^{(i)}$. Indeed, if we let $r\in\oh_F$ and $x\in\Fil^j L^{(i)}$ be such that $rx\in\Fil^{j+1}L^{(i)}$, then $x\in\Fil^{j+1}D^{(i)}$ since $\Fil^{j+1}L^{(i)}$ spans $\Fil^{j+1}D^{(i)}$ over $F$. Since the filtration of $L$ is induced from $D$ then $x\in\Fil^{j+1}L^{(i)}$.
		
		Take $n$ to be the largest integer such that $\Fil^n L^{(i)}$ is nontrivial and choose a basis $\{e_1^{(i)},\dots,e_{r_n}^{(i)}\}$ of $\Fil^n L^{(i)}$ where $r_n=\Rank(\Fil^n L^{(i)})$. Since $\Fil^{n}L^{(i)}$ is saturated in $\Fil^{n-1}L^{(i)}$ then $e_i^{(i)}\in\Fil^{n-1}L^{(i)}$ and there exists a basis of $\Fil^{n-1}L^{(i)}$ such that the quotient can be written
		\[\Fil^{n-1}L^{(i)}/\Fil^n L^{(i)}=\langle \overline{e}_{r_n+1}^{(i)},\dots, \overline{e}_{r_{n-1}}^{(i)}\rangle\]
		where $r_{n-1}=\Rank(\Fil^{n-1}L^{(i)})$. By choosing a lift $e_j^{(i)}\in \Fil^{n-1}L^{(i)}$ of $\overline{e}_j^{(i)}$ then $\Fil^{n-1}L^{(i)}$ has basis \[\{e_1^{(i)},\dots,e_{r_n}^{(i)},e_{r_n+1}^{(i)},\dots,e_{r_{n-1}}^{(i)}\}.\] 
		Completing an induction up to when $r_i=d$ will give a basis $\{e_1^{(i)},\dots,e_d^{(i)}\}$ of $L^{(i)}$ that is adapted to the induced filtration.
	\end{proof}
\end{prop}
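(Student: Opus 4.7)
The plan is to reduce to a single embedding index $i$, since both $L$ and its induced filtration decompose compatibly across $\Z/f\Z$ by Proposition \ref{phi-lat-decomp}. Fix such an $i$ and work entirely inside the $\oh_F$-module $L^{(i)}$ equipped with the induced filtration $\Fil^j L^{(i)} = \Fil^j D^{(i)} \cap L^{(i)}$. The goal becomes producing an $\oh_F$-basis of $L^{(i)}$ whose initial segments span the $\Fil^j L^{(i)}$.

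The first step is a saturation statement: for every $j$, the submodule $\Fil^{j+1} L^{(i)}$ is saturated in $\Fil^j L^{(i)}$. The argument is that if $r \in \oh_F \setminus \{0\}$ and $x \in \Fil^j L^{(i)}$ satisfy $rx \in \Fil^{j+1} L^{(i)}$, then inverting $\varpi$ shows $x \in \Fil^{j+1} D^{(i)}$ because $\Fil^{j+1} L^{(i)}$ spans $\Fil^{j+1} D^{(i)}$ over $F$; combined with $x \in L^{(i)}$ and the definition of the induced filtration, this forces $x \in \Fil^{j+1} L^{(i)}$. Since $\oh_F$ is a discrete valuation ring, saturation of $\Fil^{j+1} L^{(i)}$ inside the finitely generated torsion-free module $\Fil^j L^{(i)}$ gives that the quotient $\Fil^j L^{(i)} / \Fil^{j+1} L^{(i)}$ is finite free over $\oh_F$.

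With saturation in hand, I will construct the adapted basis by decreasing induction on $j$. Let $n$ be the largest integer with $\Fil^n L^{(i)} \neq 0$ and start from any $\oh_F$-basis $\{e_1^{(i)}, \dots, e_{r_n}^{(i)}\}$ of $\Fil^n L^{(i)}$, where $r_n = \Rank_{\oh_F}(\Fil^n L^{(i)})$. Inductively, given an adapted basis of $\Fil^{j+1} L^{(i)}$, lift any $\oh_F$-basis of the free quotient $\Fil^j L^{(i)} / \Fil^{j+1} L^{(i)}$ to elements of $\Fil^j L^{(i)}$ and adjoin them; freeness of the quotient is exactly what guarantees such a lift extends the old basis to a basis of $\Fil^j L^{(i)}$. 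Iterating down to $j \leq 0$ produces a basis of $L^{(i)}$ adapted to the induced filtration, and assembling the pieces across all $i \in \Z/f\Z$ via the isomorphism of Proposition \ref{phi-lat-decomp} finishes the proof.

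The only nontrivial ingredient is the saturation statement; once it is established, the inductive construction is completely formal, relying solely on the fact that $\oh_F$ is a DVR (so that finitely generated torsion-free modules are free and saturated submodules admit free complements). I therefore expect no real obstacle beyond carefully checking the saturation claim.
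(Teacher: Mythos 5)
Your proposal is correct and follows essentially the same route as the paper's proof: the identical saturation argument for $\Fil^{j+1}L^{(i)}$ inside $\Fil^j L^{(i)}$ (dividing by $r$ inside the $F$-vector space $\Fil^{j+1}D^{(i)}$ and using the induced filtration), followed by the same decreasing induction that lifts bases of the free quotients over the DVR $\oh_F$. The only difference is cosmetic—you make the freeness of the quotients and the role of $\oh_F$ being a DVR slightly more explicit than the paper does.
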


As a result of Proposition \ref{exist-adapt-basis}, we will take a $\varphi$-lattice $L$ to always be generated by a basis that is adapted to the induced filtration. This means the filtration structure is completely described by the Hodge polygon data $\kvec$ and so the Frobenius action determines isomorphism classes of such modules. This idea is expanded upon in Section \ref{class-SDL-sec}.

The weakly admissible condition on a filtered $\varphi$-module $D$ implies some compatibility between the Frobenius and filtration structures on $D$. As a result, a $\varphi$-lattice $L$ inside $D$ should inherit some compatibility between the two induced structures whenever $D$ is weakly admissible. The next definition gives the required condition on such $\varphi$-lattices. 

\begin{defin}\label{strong-div-lattice-def}
	A $\varphi$-lattice $L\subset D$ is said to be \textit{strongly divisible} if it satisfies the further properties
	\[\varphi_L(\Fil^jL)\subseteq p^j L\hspace{.5cm}\text{and}\hspace{.5cm}\sum_{j\in\Z}\frac{1}{p^j}\varphi_L(\Fil^jL)=L.\]
\end{defin}

\noindent We define the category of strongly divisible $\varphi$-lattices over $\oh_F\otimes_{\Z_p}\oh_K$ by $\SD_{\oh_F\otimes_{\Zp}\oh_K}^\varphi$. We obtain a natural functor $\SD_{\oh_F\otimes_{\Zp}\oh_K}^{\varphi}\rightarrow\MF_{F\otimes_{\Qp}K}^{\varphi}$ given by scalar extension up to $F\otimes_{\Qp}K$.

We would like to translate this strongly divisible condition to be in terms of the decomposition $L=\prod_i L^{(i)}$. This is done by the following lemma from \cite[Prop 3.2]{Zhu09}.

\begin{lem}\label{strong-div-idem-decomp-lem}
		A $\varphi$-lattice $L\subset D$ is strongly divisible if and only if  when viewing $L=\prod_{i}L^{(i)}$, we have both
		\[\varphi_L^{(i)}(\Fil^j L^{(i-1)})\subseteq p^{j}L^{(i)}\hspace{.5cm}\text{and}\hspace{.5cm}\sum_{j\in\Z}p^{-j}\varphi_L^{(i)}(\Fil^j L^{(i-1)})=L^{(i)}\]
		for all $i\in\Z/f\Z$.
	\begin{proof}
		We know that $L$ is strongly divisible if and only if 
		\[\varphi_L(\Fil^jL)\subseteq p^j L\hspace{.5cm}\text{and}\hspace{.5cm}\sum_{i\in\Z}\frac{1}{p^j}\varphi_L(\Fil^jL)=L.\]
		Since the idempotent decomposition $L=\prod_{i\in\Z/f\Z}L^{(i)}$ respects both the filtration and Frobenius structures, we may write
		\[\prod_{i\in\Z/f\Z}\varphi_{L}^{(i)}(\Fil^j L^{(i-1)})=\varphi_L(\Fil^jL)\subseteq p^jL=\prod_{i\in\Z/f\Z}p^jL^{(i)}\]
		allowing us to conclude that $\varphi_L^{(i)}(\Fil^j L^{(i-1)})\subseteq p^{j}L^{(i)}$. Furthermore, we may also write
		\[\sum_{j\in\Z}\prod_{i\in\Z/f\Z}\frac{1}{p^j}\varphi_{L}^{(i)}(\Fil^jL^{(i-1)})=\sum_{i\in\Z}\frac{1}{p^j}\varphi_L(\Fil^jL)=L=\prod_{i\in\Z/f\Z}L^{(i)}.\]
		Orthogonality of the idempotent decomposition will then allow us to deduce that the equality $\sum_{j\in\Z}p^{-j}\varphi_L^{(i)}(\Fil^j L^{(i-1)})=L^{(i)}$. The only if direction follows by following the above argument in reverse.
	\end{proof}
\end{lem}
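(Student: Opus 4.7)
The plan is to reduce the global strong divisibility condition on $L$ to its componentwise analogues via the idempotent decomposition $L = \prod_i L^{(i)}$ supplied by Proposition \ref{phi-lat-decomp}. The essential point is that this decomposition respects both the filtration and the Frobenius action, with the Frobenius manifesting as the tuple of partial Frobenii $\varphi_L^{(i)} : \varphi^* L^{(i-1)} \to L^{(i)}$ that shift the idempotent index by one.

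First I would unpack the decomposition of $\varphi_L(\Fil^j L)$. Using $\Fil^j L = \prod_i \Fil^j L^{(i)}$ (from the induced-filtration hypothesis together with Proposition \ref{phi-lat-decomp}) and the tuple description $\varphi_L = (\varphi_L^{(0)}, \dots, \varphi_L^{(f-1)})$, one may write
\[
\varphi_L(\Fil^j L) \;=\; \prod_{i \in \Z/f\Z} \varphi_L^{(i)}\bigl(\Fil^j L^{(i-1)}\bigr),
\]
where the index shift $(i-1)$ is forced by the source of $\varphi_L^{(i)}$. In parallel, $p^j L = \prod_i p^j L^{(i)}$.

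For the forward direction, if $L$ is strongly divisible then $\varphi_L(\Fil^j L) \subseteq p^j L$ translates, by the orthogonality of the idempotents $\idem_i \in \oh_F^{|\tau|}$, into the componentwise inclusions $\varphi_L^{(i)}(\Fil^j L^{(i-1)}) \subseteq p^j L^{(i)}$. Applying the same orthogonality to the equality $\sum_j p^{-j} \varphi_L(\Fil^j L) = L$, which decomposes as a product of sums indexed by $i$, yields $\sum_j p^{-j} \varphi_L^{(i)}(\Fil^j L^{(i-1)}) = L^{(i)}$ for each $i$. The converse direction simply runs these manipulations in reverse: assembling the componentwise inclusions (respectively equalities) into a product over $i$ reconstitutes the global inclusion (respectively equality) on $L$, again using orthogonality of the $\idem_i$.

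The only real obstacle is bookkeeping: one must keep track of the index shift introduced by the partial Frobenii and verify that a product of inclusions or equalities across the idempotent pieces is genuinely equivalent to the corresponding single statement on $L$. Both points are routine once Proposition \ref{phi-lat-decomp} is in hand, so I do not expect any substantive difficulty beyond notational care.
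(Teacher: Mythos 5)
Your proposal is correct and follows essentially the same route as the paper: decompose $\varphi_L(\Fil^j L)$ and $p^jL$ along the idempotents with the index shift coming from the partial Frobenii, then use orthogonality of the $\idem_i$ to pass between the global and componentwise conditions in both directions. No substantive differences from the paper's argument.
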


In \cite{Laf80}, Laffaille showed that the weak admissibility condition on a filtered $\varphi$-module $D$ over $K$ is equivalent to the existence of a strongly divisible lattice over $\oh_K$ inside $D$. The next proposition from \cite[Prop 3.3]{Zhu09} may be viewed as an extension of this result to $F$-valued scalars.

\begin{prop}\label{weak-admis-equiv-SD-prop}
	Let $D$ be a $d$-dimensional filtered $\varphi$-module over $F\otimes_{\Q_p}K$ with Hodge polygon data $\HP(D)=\kvec$. The following statements are equivalent:
	\begin{enumerate}
		\item $D$ is weakly admissible;
		\item There exists a strongly divisible lattice $L$ over $\oh_F\otimes_{\Z_p}\oh_K$ inside $D$;
		\item There exists a $\varphi$-lattice $L=\prod_i L^{(i)}$ over $\oh_F\otimes_{\Z_p}\oh_K$ inside $D$ such that with respect to a basis adapted to the induced filtration, $\varphi_{L}^{(i)}=A^{(i)}\Delta_{\kvec^{(i-1)}}$ where $A^{(i)}\in\GL_2(\oh_F)$ and $\Delta_{\kvec^{(i)}}=\Diag(p^{k_{i1}},\dots,p^{k_{id}})$ for all $i\in\Z/f\Z$.
	\end{enumerate}
	\begin{proof}
		The fact that $(b)\Rightarrow (a)$ is due to \cite[\S 3]{Laf80} and then extending scalars to $\oh_F$. To see that $(a)\Rightarrow (b)$, then by \cite{Laf80} again, weak admissibility implies the existence of a strongly divisible lattice $L'$ over $\oh_K$ since weak admissibility is a condition on $K$. By setting $L=L'\otimes_{\Z_p}\oh_F\coloneqq \sum_{j=1}^{n}L'e_i$ where $\oh_F=\langle e_1,\dots,e_n\rangle$ over $\Z_p$, then we have an $\oh_K\otimes_{\Z_p}\oh_E$ module and since filtrations and Frobenius are $F$-linear,
		\[\sum_{j\in\Z}\frac{1}{p^j}\varphi(\Fil^jL)=\sum_{\ell=0}^{n}\sum_{j\in\Z}\frac{1}{p^j}\varphi(\Fil^jL')e_\ell=\sum_{\ell=0}^{n}L'e_\ell=L.\]
		Thus $L$ is strongly divisible over $\oh_F\otimes_{\Z_p}\oh_K$ and we have that $(a)\Leftrightarrow (b)$.
		
		For $(b)\Rightarrow (c)$, Lemma \ref{strong-div-idem-decomp-lem} says that we must have $\sum_{j\in\Z}p^{-j}\varphi_L^{(i)}(\Fil^j L^{(i-1)})=L^{(i)}$ and $\varphi_L^{(i)}(\Fil^j L^{(i-1)})\subseteq p^{j}L^{(i)}$. By Lemma \ref{exist-adapt-basis}, we may choose a basis $\{e_1^{(i)},\dots,e_d^{(i)}\}$ of $L^{(i)}$ which is adapted to the induced filtration. For $1\le j\le d$, set $r_{ij}=\dim\Fil^{k_{ij}}L^{(i)}$. Then the fact that $\varphi_L^{(i)}(\Fil^j L^{(i-1)})\subseteq p^{j}L^{(i)}$ implies for each $1\le j\le d$, 
		
		\[\varphi_L^{(i)}(e_1^{(i-1)},\dots,e_{r_{ij}}^{(i-1)})=(e_1^{(i)},\dots,e_d^{(i)})A^{(i)}_{r_{(i-1)j}}\begin{pmatrix}
			\Diag(p^{k_{(i-1)1}},\dots, p^{k_{{(i-1)j}}}) \\ 0 
		\end{pmatrix}\]
		for some $A^{(i)}_{r_{(i-1)j}}\in\Mat_{r_{(i-1)j}}(\oh_F)$. Inducting on $j$ will give us $\Mat(\varphi_L^{(i)})=A^{(i)}\Delta_{\kvec^{(i-1)}}$. The fact that $\sum_{j\in\Z}p^{-j}\varphi_L^{(i)}(\Fil^j L^{(i-1)})=L^{(i)}$ will imply that the columns of $A^{(i)}$ will generate $L^{(i)}$ over $\oh_F$ so that $A^{(i)}\in\GL_d(\oh_F)$. The $(c)\Rightarrow(b)$ direction follows from Lemma \ref{strong-div-idem-decomp-lem}.		
	\end{proof}
\end{prop}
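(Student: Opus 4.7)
The plan is to establish the equivalences by proving $(a) \Leftrightarrow (b)$ and $(b) \Leftrightarrow (c)$, leveraging Laffaille's classical theorem over $\oh_K$ together with the idempotent decomposition machinery already developed in Section \ref{lattice-inside-sec}. The first equivalence is essentially a scalar extension of a known result, while the second amounts to choosing a well-adapted basis and reading off what the strong divisibility conditions impose on the matrix of each partial Frobenius.

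For $(a) \Leftrightarrow (b)$: Weak admissibility is defined through the Newton and Hodge polygons, which are computed without reference to the $F$-structure, so restricting scalars from $\oh_F \otimes_{\Z_p} \oh_K$ down to $\oh_K$ preserves the relevant conditions. Hence $(b) \Rightarrow (a)$ follows from Laffaille's theorem by forgetting the $\oh_F$-structure. For the converse, Laffaille supplies a strongly divisible lattice $L'$ over $\oh_K$ inside $D$, and I would then set $L \coloneqq \bigoplus_{\ell=1}^n L' e_\ell$, where $\{e_\ell\}$ is a $\Z_p$-basis of $\oh_F$. Because the filtration and Frobenius on $D$ are $F$-linear, the strong divisibility relation $\sum_j p^{-j} \varphi_L(\Fil^j L) = L$ transfers from $L'$ to $L$ by $F$-linearity.

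For $(b) \Leftrightarrow (c)$: By Proposition \ref{phi-lat-decomp} and Lemma \ref{strong-div-idem-decomp-lem}, strong divisibility of $L$ is equivalent to the assertion that each partial Frobenius $\varphi_L^{(i)}$ satisfies $\varphi_L^{(i)}(\Fil^j L^{(i-1)}) \subseteq p^j L^{(i)}$ together with $\sum_j p^{-j} \varphi_L^{(i)}(\Fil^j L^{(i-1)}) = L^{(i)}$. I would fix the adapted basis $\{e_1^{(i)}, \ldots, e_d^{(i)}\}$ of $L^{(i)}$ guaranteed by Proposition \ref{exist-adapt-basis}, with the weights arranged so that $e_1^{(i-1)}, \ldots, e_{r_{(i-1)j}}^{(i-1)}$ span $\Fil^{k_{(i-1)j}} L^{(i-1)}$. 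An induction up the filtration then forces the matrix of $\varphi_L^{(i)}$ to take the factored form $A^{(i)} \Delta_{\kvec^{(i-1)}}$ with $A^{(i)} \in \Mat_d(\oh_F)$: the $\ell$-th column must lie in $p^{k_{(i-1)\ell}} L^{(i)}$, so we may pull out the diagonal factor. The generation condition then says precisely that the columns of $A^{(i)}$ span $L^{(i)}$, i.e.\ $A^{(i)} \in \GL_d(\oh_F)$. Conversely, given the matrix form in $(c)$, one directly verifies the partial strong divisibility conditions of Lemma \ref{strong-div-idem-decomp-lem}, which reassemble into strong divisibility of $L$.

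The main technical hurdle is the inductive factorization argument in $(b) \Rightarrow (c)$: one must ensure that the adapted basis can be chosen coherently so that the columns of the partial Frobenius matrix are simultaneously divisible by the correct powers of $p$. This relies on the saturation property of successive filtration pieces that is embedded in the proof of Proposition \ref{exist-adapt-basis}, together with the fact that basis vectors attached to deeper filtration levels carry strictly larger Hodge-Tate weights. Once the matrix factorization is in hand, identifying the generation condition with the invertibility of $A^{(i)}$ is a routine bookkeeping exercise.
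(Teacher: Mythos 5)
Your proposal follows the paper's argument essentially verbatim: Laffaille's theorem plus scalar extension by a $\Z_p$-basis of $\oh_F$ for $(a)\Leftrightarrow(b)$, and Lemma \ref{strong-div-idem-decomp-lem} together with an adapted basis from Proposition \ref{exist-adapt-basis} and induction up the filtration to extract the factorization $A^{(i)}\Delta_{\kvec^{(i-1)}}$ for $(b)\Leftrightarrow(c)$. No substantive differences; the approach is correct and matches the paper's.
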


\noindent As a result, we may refine our scalar extension functor $\SD_{\oh_F\otimes_{\Zp}\oh_K}^{\varphi}\rightarrow\MF_{F\otimes_{\Qp}K}^{\varphi,w.a.}$. Restricting to strongly divisible lattices arising out of weakly admissible filtered $\varphi$-modules of prescribed Hodge newton polygon data $\kvec$, we obtain the proper subcategory $\SD_{\oh_F\otimes_{\Zp}\oh_K}^{\varphi,\kvec}$.

In practice, the equivalence of Proposition \ref{weak-admis-equiv-SD-prop} give us a concrete way to describe the structure of crystalline representations. Since isomorphism classes of crystalline representations are directly related to isomorphism classes of filtered $\varphi$-modules by the Colmez-Fontaine equivalence of Theorem \ref{CF-equiv-w-coeff-thm}, then in order to determine such isomorphism classes, we need only classify the strongly divisible lattices associated to those filtered $\varphi$-modules. The following section gives us the machinery to do just this.

\subsection{Classifying Strongly Divisible Lattices}\label{class-SDL-sec}
Let $D$ continue to be a weakly admissible filtered $\varphi$-module over $F\otimes_{\Qp}K$ with Hodge polygon data $\HP(D)=\kvec$. Also let $L$ be a strongly divisible $\varphi$-lattice inside $D$ and assume that it has a basis $\{e_1,e_2,\dots,e_d\}$ which is adapted to the induced filtration which is guaranteed to exist by Proposition \ref{weak-admis-equiv-SD-prop}. In this case, for each labeled Hodge-Tate weight $k_{ij}\in\kvec^{(i)}\subset\kvec$, we define $r_{ij}\coloneqq\dim_F\Fil^{k_{ij}}D^{(i)}$.

\begin{defin}\label{para-grp-def}
	We define the \textit{parabolic group} of $\kvec^{(i)}$ over $\oh_F$ to be the proper subgroup $\Para_{\kvec^{(i)}}(\oh_F)\subset\GL_d(\oh_F)$ defined by
	\[\Para_{\kvec^{(i)}}(\oh_F)\coloneqq\begin{pmatrix}
		\GL_{r_{i1}}(\oh_F) & \star & \cdots & \star \\
		0 & \GL_{r_{i2}-r_{i1}}(\oh_F) & \star & \vdots \\
		\vdots & & \ddots & \star\\
		0 & \dots & 0 & \GL_{d-r_{i(d-1)}}(\oh_F)
	\end{pmatrix}\hspace{1cm}\]
	where $\star$ denotes arbitrary elements in $\oh_F$.
	
	We can then define the \textit{parabolic group} of $\kvec$ over $\oh_F$ to be the subgroup $\Para_\kvec(\oh_F)\subset\GL_d(\oh_F)^f$ where $\Para_\kvec\coloneqq(\Para_{\kvec^{(0)}}(\oh_F),\dots,\Para_{\kvec^{(f-1)}}(\oh_F)) $. 
\end{defin}


We can use our parabolic groups to induce an equivalence relation on matrices. Recall that for any $\kvec^{(i)}=(k_{i1},\dots,k_{id})\subset \kvec$, we define $\Delta_{\kvec^{(i)}}=\Diag(p^{k_{i1}},\dots,p^{k_{id}})$. Let $(A)=(A^{(0)},\dots A^{(f-1)})$ and $(B)=(B^{(0)},\dots,B^{(f-1)})$ be two $f$-tuples in $\GL_d(\oh_F)^f$. We say that $(A)$ and $(B)$ are \textit{parabolic equivalent with respect to $\kvec$}, written $(A)\sim_{\kvec}(B)$, if there exists $(C)=(C^{(0)},\dots,C^{(f-1)})\in\Para_\kvec(\oh_F)$ such that for each $0\le i\le f-1$, we have
\[B^{(i)}=C^{(i)}A^{(i)}\Delta_{\kvec^{(i-1)}}\left(C^{(i-1)}\right)^{-1}\Delta_{\kvec^{(i-1)}}^{-1}.\]
In this case, we will commonly write $(C)*_\Para(A)=(B)$. The next theorem inspired from \cite[Thm 3.5]{Zhu09} shows that we can use parabolic equivalence classes to define isomorphism classes of strongly divisible lattices. 

\begin{thm}\label{para-equiv-class-thm}
	There is a bijection
	\[\Theta:\GL_d(\oh_F)^f/\sim_{\kvec}\longrightarrow\SD_{\oh_F\otimes_{\Zp}\oh_K}^{\varphi,\kvec}\]
	defined by sending an $f$-tuple $(A)=(A^{(0)},\dots,A^{(f-1)})\in\GL_d(\oh_F)^f$ to a strongly divisible lattice $L=\prod_iL^{(i)}$ inside $D\in\MF_{F\otimes_{\Qp}K}^{\varphi,w.a.,\kvec}$ in a basis adapted to the filtration where $\varphi_{L}^{(i)}=A^{(i)}\Delta_{\kvec^{(i-1)}}$.
	\begin{proof}
		We begin with the reverse direction. Let $L,L'\in\SD_{\oh_F\otimes_{\Z_p}\oh_K}^{\varphi,\kvec}$ be isomorphic strongly divisible lattices inside $D\in\MF_{F\otimes_{\Qp}K}^{\varphi,w.a.,\kvec}$ via an isomorphism $h:L\rightarrow L'$. Proposition \ref{weak-admis-equiv-SD-prop} says that we may write $L=\prod_i L^{(i)}$ and $L'=\prod_i L'^{(i)}$ with Frobenius actions given by $\varphi_L^{(i)}=A^{(i)}\Delta_{\kvec^{(i-1)}}$ and $\varphi_{L'}^{(i)}=A'^{(i)}\Delta_{\kvec^{(i-1)}}$ respectively. Restricting our filtration and $\varphi$-compatible isomorphism to each embedding piece gives $h^{(i)}(L^{(i)})\cong L'^{(i)}$, $h^{(i)}(\Fil^j L^{(i)})\cong \Fil^j L'^{(i)}$ and $h^{(i)}\circ\varphi_L^{(i)}=\varphi_{L'}^{(i)}\circ h^{(i-1)}$. These conditions on $h^{(i)}$ implies that for $1\le j\le d$ if we set $r_{ij}=\dim \Fil^jL^{(i)}$, we have
		\[h^{(i)}(e_{r_{i(j-1)}}^{(i)},\dots, e_{r_{ij}}^{(i)})=(e'^{(i)}_{r_{i(j-1)}},\dots, e'^{(i)}_{r_{ij}})C^{(i)}_{r_{ij}}\]
		where each $C^{(i)}_{r_{ij}}\in\GL_{r_{ij}-r_{i(j-1)}}(\oh_F)$. Hence, we have $h^{(i)}(e_1^{(i)},\dots,e_d^{(i)})=(e_1^{(i)},\dots,e_d^{(i)})C^{(i)}$ where
		\[C^{(i)}\coloneqq\begin{pmatrix}
			C_{r_{i1}}^{(i)} & \star & \cdots & \star \\
			0 & C_{r_{i2}}^{(i)} & & \vdots \\
			\vdots & & \ddots & \star\\
			0 & \cdots & 0 & C_{r_{id}}^{(i)}
		\end{pmatrix}\in \Para_{\kvec^{(i)}}(\oh_F)\]
		which gives $\Mat(h)\in\Para_\kvec(\oh_F)$. We then have the matrix relation from $\varphi$-compatibility
		\[A^{(i)}\Delta_{\kvec^{(i-1)}}=C^{(i)}A'^{(i)}\Delta_{\kvec^{(i-1)}}(C^{(i-1)})^{-1}\]
		so we conclude that $(C) *_\Para (A) = (A')$.
		
		The other direction follows from performing the above argument in reverse.
	\end{proof}
\end{thm}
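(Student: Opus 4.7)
The plan is to construct $\Theta$ explicitly by building, for each representative $(A)$, the strongly divisible lattice $L(A) = \prod_i L(A)^{(i)}$ whose $i$-th piece is the free $\oh_F$-module on a basis $\{e_1^{(i)},\dots,e_d^{(i)}\}$ declared to be adapted to the prescribed filtration (so $\{e_1^{(i)},\dots,e_{r_{ij}}^{(i)}\}$ spans $\Fil^{k_{ij}}L(A)^{(i)}$) and whose partial Frobenius matrix is $\varphi^{(i)}_{L(A)} = A^{(i)}\Delta_{\kvec^{(i-1)}}$. That this actually defines an object of $\SD_{\oh_F\otimes_{\Zp}\oh_K}^{\varphi,\kvec}$ is immediate from Proposition \ref{weak-admis-equiv-SD-prop}(c), and surjectivity of $\Theta$ is essentially that same proposition read backwards: any strongly divisible lattice with Hodge data $\kvec$ admits a basis adapted to the filtration in which $\varphi^{(i)}_L$ factors as some $A^{(i)}\in\GL_d(\oh_F)$ times $\Delta_{\kvec^{(i-1)}}$, so it lies in the image of $\Theta$.

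The content of the theorem is therefore the equivalence
\[
L(A)\cong L(A')\quad\Longleftrightarrow\quad (A)\sim_\kvec (A'),
\]
which I would prove by analyzing matrices of isomorphisms in adapted bases. For the $(\Leftarrow)$ direction, suppose $(C)\ast_\Para(A)=(A')$ with $(C)\in\Para_\kvec(\oh_F)$. I define $h^{(i)}:L(A)^{(i)}\to L(A')^{(i)}$ by the matrix $C^{(i)}$ in the adapted bases. Because $C^{(i)}$ is block upper-triangular with blocks indexed by the filtration jumps of $\kvec^{(i)}$, the map $h^{(i)}$ preserves the induced filtration; because $C^{(i)}$ has invertible diagonal blocks, $h^{(i)}$ is an isomorphism; and the parabolic equivalence relation is exactly the matrix identity needed for $h^{(i)}\circ\varphi_{L(A)}^{(i)} = \varphi_{L(A')}^{(i)}\circ h^{(i-1)}$. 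Hence $h=(h^{(i)})$ is an isomorphism of strongly divisible lattices.

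For the $(\Rightarrow)$ direction, given an isomorphism $h:L(A)\to L(A')$, decompose $h=(h^{(i)})$ along the idempotents using Proposition \ref{phi-lat-decomp}. The key point is that $h^{(i)}$ must send $\Fil^{k_{ij}}L(A)^{(i)}$ isomorphically onto $\Fil^{k_{ij}}L(A')^{(i)}$ for every $j$; since both sides are spanned by the first $r_{ij}$ basis vectors, the matrix $C^{(i)}$ of $h^{(i)}$ is necessarily block upper-triangular with respect to the partition $r_{i1}<r_{i2}<\cdots <r_{id}$, and the diagonal blocks must be invertible because $h^{(i)}$ induces isomorphisms on the graded pieces. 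Thus $(C)\in\Para_\kvec(\oh_F)$, and translating the $\varphi$-compatibility of $h$ into matrix form yields precisely $A^{(i)}\Delta_{\kvec^{(i-1)}} = C^{(i)}A'^{(i)}\Delta_{\kvec^{(i-1)}}\bigl(C^{(i-1)}\bigr)^{-1}$, i.e.\ $(C)\ast_\Para(A')=(A)$.

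The main obstacle I expect is the careful bookkeeping in the $(\Rightarrow)$ direction, specifically verifying that the induced filtration structure on $L(A)^{(i)}$ (not just on $D^{(i)}$) forces any filtration-preserving isomorphism to be block upper-triangular with invertible diagonal blocks in an adapted basis. This relies on the fact that the induced filtration submodules are \emph{saturated} in $L(A)^{(i)}$ (a point already used in the proof of Proposition \ref{exist-adapt-basis}), which is what allows one to deduce that the integral matrix $C^{(i)}$ — rather than merely its extension to $F$ — lies in $\Para_{\kvec^{(i)}}(\oh_F)$.
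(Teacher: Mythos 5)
Your proposal is correct and follows essentially the same route as the paper: surjectivity comes from Proposition \ref{weak-admis-equiv-SD-prop} (adapted bases with $\varphi_L^{(i)}=A^{(i)}\Delta_{\kvec^{(i-1)}}$), and the equivalence $L(A)\cong L(A')\Leftrightarrow (A)\sim_\kvec (A')$ is proved by reading off the matrix of an isomorphism in adapted bases, using filtration-compatibility to land in $\Para_\kvec(\oh_F)$ and $\varphi$-compatibility to obtain the $*_\Para$ relation. The only difference is that you spell out explicitly (including the saturation point and the converse construction of $h$ from $(C)$) what the paper compresses into ``performing the above argument in reverse.''
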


As a consequence, we will commonly write $\Theta(A)=L(A)=\prod_i L(A)^{(i)}$ to denote the associated strongly divisible lattice with $i$-th partial Frobenius action given by $A^{(i)}\Delta_{\kvec^{(i-1)}}$. We define
\[D(A)\coloneq L(A)\otimes_{\oh_F\otimes_{\Zp}\oh_K}F\otimes_{\Qp}K\]
to be the associated weakly admissible filtered $\varphi$-module with Hodge Polygon data $\kvec$ by Proposition \ref{weak-admis-equiv-SD-prop}. By the aforementioned proposition, all such weakly admissible filtered $\varphi$-modules arise out of this construction. Recalling the equivalence functor $\V_\cris$ of Theorem \ref{CF-equiv-w-coeff-thm}, there exists a crystalline representation with Hodge-Tate weight data $\kvec$ given by
\[V(A)\coloneq \V_\cris^*(D(A))\]
which likewise exhausts all such representations. This means we can use parabolic equivalence classes to pick out `simple' representatives for the isomorphism classes of crystalline representations. More on this in Section \ref{models-irred-reps-chap}.

We conclude this section with looking that how the bijection of Theorem \ref{para-equiv-class-thm} behaves with respect to the notion of taking tensor products. This will be important as it will allow us to twist by crystalline characters in order to adjust the labeled Hodge-Tate weights of our representations for more convenient computations. Consider strongly divisible lattices  $\Theta(A)=L(A)$ and $\Theta(A')=L(A')$ of $\oh_F\otimes_\Zp\oh_K$ dimension $d$ and $d'$ respectively and arising out of weakly admissible filtered $\varphi$-modules with Hodge Polygon data $\kvec$ and $\kvec'$ respectively.  

We define two tensor products
\begin{itemize}
	\item $\kvec\otimes\kvec'\coloneqq \left((k_{ij}+k'_{ij'})_{j,j'}\right)_{i\in\Z/f\Z}$ where $1\le i\le d$ and $1\le i'\le d'$ reordering if necessary.
	\item $A\otimes A'\coloneqq (A^{(i)}\otimes A'^{(i)})_{i\in\Z/f\Z}\in \GL_{dd'}(\oh_F)^f$ where $A^{(i)}\otimes A'^{(i)}$ denotes the normal Kronecker tensor product.
\end{itemize}

\noindent The result we desire is given in the following proposition from \cite[Prop 3.7]{Zhu09}.

\begin{prop}\label{bij-tensor-prop}
	Let notation be as above so that $\Theta(A)=L(A)\in\SD_{\oh_F\otimes_{\Zp}\oh_K}^{\varphi,\kvec}$ and $\Theta(A')=L(A')\in\SD_{\oh_F\otimes_{\Zp}\oh_K}^{\varphi,\kvec'}$. Then $\Theta(A\otimes A')=L(A)\otimes L(A')\in\SD_{\oh_F\otimes_{\Zp}\oh_K}^{\varphi,\kvec\otimes\kvec'}$.
	\begin{proof}
		First, note that 
		\[L(A)\otimes L(A')=\prod_i L(A)^{(i)}\otimes \prod_i L(A')^{(i)}=\prod_i L(A)^{(i)}\otimes L(A')^{(i)}\]
		with the last equality following from the orthogonality of the idempotent decomposition. Both $L(A)$ and $L(A')$ are strongly divisible so Proposition \ref{weak-admis-equiv-SD-prop} implies the existence of a basis adapted to the filtration so that the $i$-th partial Frobenius action of $L(A)\otimes L(A')$ is given by
		\[ (A^{(i)}\Delta_{\kvec^{(i-1)}})\otimes(A'^{(i)}\Delta_{\kvec'^{(i-1)}})= (A^{(i)}\otimes A'^{(i)})(\Delta_{\kvec^{(i-1)}}\otimes\Delta_{\kvec'^{(i-1)}})\]
		where the equality follows form the mixed product property of the Kronecker tensor product. After observing the fact that 
		\[\Delta_{\kvec^{(i-1)}}\otimes\Delta_{\kvec'^{(i-1)}}=\Diag(p^{k_{(i-1)1}+k'_{(i-1)1}},\dots,p^{k_{(i-1)d}+k'_{(i-1)d'}})\]
		then it is clear that $\Theta(A\otimes A')=L(A)\otimes L(A')\in\SD_{\oh_F\otimes_{\Zp}\oh_K}^{\varphi,\kvec\otimes\kvec'}$.
	\end{proof}
\end{prop}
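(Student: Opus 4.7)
The plan is to verify the two defining features of the bijection $\Theta$ on the tensor product $L(A) \otimes L(A')$: that it is strongly divisible inside $D(A) \otimes D(A')$ and that in a basis adapted to the induced filtration its $i$-th partial Frobenius is $(A^{(i)} \otimes A'^{(i)}) \Delta_{(\kvec \otimes \kvec')^{(i-1)}}$.

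First, I would use the orthogonality of the idempotents $\idem_i$ established in Proposition \ref{phi-lat-decomp} together with the distributivity of $\otimes$ over the idempotent decomposition to reduce to the per-embedding statement, i.e.\ to writing
\[
L(A) \otimes L(A') \cong \prod_{i \in \Z/f\Z} \bigl( L(A)^{(i)} \otimes L(A')^{(i)} \bigr).
\]
This lets us work one embedding at a time.

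Next, I would construct a basis of each $L(A)^{(i)} \otimes L(A')^{(i)}$ adapted to the filtration induced from $D(A)^{(i)} \otimes D(A')^{(i)}$, where the filtration is defined by $\Fil^n = \sum_{j + j' = n} \Fil^j D(A)^{(i)} \otimes \Fil^{j'} D(A')^{(i)}$. Starting from bases $\{e^{(i)}_j\}$ and $\{e'^{(i)}_{j'}\}$ of $L(A)^{(i)}$ and $L(A')^{(i)}$ adapted to their filtrations (guaranteed by Proposition \ref{exist-adapt-basis}), the set of pure tensors $\{e^{(i)}_j \otimes e'^{(i)}_{j'}\}$ reordered by increasing weight $k_{ij} + k'_{ij'}$ forms an $\oh_F$-basis adapted to the induced filtration, and by definition the associated labeled Hodge--Tate weights are precisely $(\kvec \otimes \kvec')^{(i)}$. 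Applying the same ordering in the Kronecker product then identifies $\Delta_{\kvec^{(i-1)}} \otimes \Delta_{\kvec'^{(i-1)}}$ with $\Delta_{(\kvec \otimes \kvec')^{(i-1)}}$.

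Finally, since Frobenius on a tensor product acts diagonally, the $i$-th partial Frobenius of $L(A)^{(i)} \otimes L(A')^{(i)}$ in this basis is $(A^{(i)} \Delta_{\kvec^{(i-1)}}) \otimes (A'^{(i)} \Delta_{\kvec'^{(i-1)}})$, which by the mixed-product property equals $(A^{(i)} \otimes A'^{(i)}) \Delta_{(\kvec \otimes \kvec')^{(i-1)}}$ with $A^{(i)} \otimes A'^{(i)} \in \GL_{dd'}(\oh_F)$. The characterization of strong divisibility via Proposition \ref{weak-admis-equiv-SD-prop}(c) now immediately yields $L(A) \otimes L(A') \in \SD_{\oh_F \otimes_{\Z_p} \oh_K}^{\varphi, \kvec \otimes \kvec'}$, and the bijection $\Theta$ sends $A \otimes A'$ to this lattice by construction.

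The only nontrivial bookkeeping is the reordering of the basis of pure tensors so that it becomes adapted to the filtration; once this reordering is fixed and applied simultaneously to both the Frobenius matrix and the diagonal weight matrix, the mixed-product identity does all the real work. I expect this ordering step to be the main (and only) obstacle, but it is purely combinatorial.
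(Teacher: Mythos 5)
Your proposal is correct and follows essentially the same route as the paper: decompose along the idempotents, tensor the adapted bases, and apply the mixed-product property of the Kronecker product to identify the partial Frobenius as $(A^{(i)}\otimes A'^{(i)})\Delta_{(\kvec\otimes\kvec')^{(i-1)}}$. Your explicit attention to reordering the pure-tensor basis by total weight is a point the paper glosses over (it hides this in the ``reordering if necessary'' clause of the definition of $\kvec\otimes\kvec'$), but it is the same argument.
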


\noindent The results of Proposition \ref{bij-tensor-prop} give us a way to simplify the Hodge-Tate weight data $\kvec$ of a crystalline representation $V$. 

\begin{rem}\label{weight-twist-rem}
	Consider the representation $V(A)\in\Rep_{\cris/F}^\kvec(G_K)$ of dimension $d$ and define a crystalline character $V(I_d)\in \Rep_{\cris/F}^{\kvec'}(G_K)$ where $\kvec'=(-k_{id})_i$. Then the above results will imply that $V(A)\otimes V(I_d)$ will have Hodge-Tate weight data given by $(k_{i1}-k_{id},\dots,0)_i$ but leave the matrices $(A)$ unaffected. Hence, up to a twist by a crystalline character, we may always assume that $k_{id}=0$ for all $i\in\Z/f\Z$.
\end{rem}

\section{Isomorphism Classes in Dimension Two}\label{models-irred-reps-chap}

\subsection{Parabolic Equivalence Classes}\label{para-equiv-class-sec}

From Section \ref{class-SDL-sec}, recall that we may induce an equivalence relation $\sim_{\kvec}$ on $\GL_2(\oh_F)^f$  by understanding that $(A^{(i)})\sim_{\kvec}(B^{(i)})$ whenever there exists $(C^{(i)})\in\Para_\kvec(\oh_F)$ such that
\[B^{(i)}=C^{(i)}A^{(i)}\Delta_{\kvec^{(i-1)}}\left(C^{(i-1)}\right)^{-1}\Delta_{\kvec^{(i-1)}}^{-1}\]
for all $i\in\Z/f\Z$. Under the assumption that dimension $d=2$ and the labeled Hodge-Tate weights are \textit{regular} in the sense that $k_i>0$ for all $i\in\Z/f\Z$, we may realize that each parabolic group $\Para_{\kvec^{(i)}}(\oh_F)$ is nothing more than the invertible upper triangular matrices over $\oh_F$ and $\Delta_{\kvec^{(i)}}=\Diag(p^{k_i},1)$ by Remark \ref{weight-twist-rem}. The purpose of this section is to pick equivalence class representatives that are as simple as possible by choosing suitable parabolic matrices $(C^{(i)})$.

Let $[(A^{(i)})]\in\GL_2(\oh_F)^f/\sim_{\kvec}$ define an arbitrary equivalence class where each
\[A^{(i)}=\begin{pmatrix}a^{(i)}_{11} & a^{(i)}_{12} \\ a^{(i)}_{21} & a^{(i)}_{22}\end{pmatrix}.\]
Since each matrix is invertible, then we can safely assume that at least one of $a^{(i)}_{21}$ and $a^{(i)}_{22}$ is invertible. Following the ideas found in \cite[\S~2]{Liu21}, we will explicitly construct parabolic matrices $(C^{(i)})\in\Para_\kvec(\oh_F)$ by
\[C^{(i)}=\begin{pmatrix}c^{(i)}_{11} & c^{(i)}_{12} \\ 0 & c^{(i)}_{22}\end{pmatrix}\]
such that the result of $(C^{(i)})*_\Para(A^{(i)})$ has $\begin{psmallmatrix}0 \\ \beta\end{psmallmatrix}$ as one of its columns where $\beta\in\oh_F^*$. In order to ease notation, let us begin to write $(xy)^{(i)}=x^{(i)}y^{(i)}$.

\begin{lem}\label{para-equiv-inv-lem} Suppose $k_i>0$ for all $i\in\Z/f\Z$ and $(A^{(i)})\sim_{\kvec}(B^{(i)})$. Then $a^{(i)}_{2\ell}\in\oh_F^*$ if and only if $b^{(i)}_{2\ell}\in\oh_F^*$.
	\begin{proof}
		Let $(C^{(i)})$ be the parabolic equivalent change of basis turning $(A^{(i)})$ into $(B^{(i)})$ using notation as above. We have
		\[b_{21}^{(i)}=\frac{(a_{21} c_{11})^{(i)}}{c_{22}^{(i-1)}}\hspace{2cm}b_{22}^{(i)}=\frac{(a_{22} c_{22})^{(i)}+(a_{21} c_{12})^{(i)} p^{k_i}}{c_{22}^{(i-1)}}.\]
		The lemma follows from the fact that $c_{11}^{(i)},c_{22}^{(i)}\in\oh_F^*$ for all $i$.
	\end{proof}
\end{lem}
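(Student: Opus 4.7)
My plan is to carry out the matrix multiplication implicit in the parabolic equivalence relation and focus on the second row. Write
\[
B^{(i)} = C^{(i)} A^{(i)} \Delta_{\kvec^{(i-1)}} \bigl(C^{(i-1)}\bigr)^{-1} \Delta_{\kvec^{(i-1)}}^{-1},
\]
with $C^{(i)} = \begin{psmallmatrix} c_{11}^{(i)} & c_{12}^{(i)} \\ 0 & c_{22}^{(i)} \end{psmallmatrix}$ and $\Delta_{\kvec^{(i-1)}} = \Diag(p^{k_{i-1}}, 1)$ (using the reduction of Remark \ref{weight-twist-rem} to the regular case $k_{id}=0$). The inverse $(C^{(i-1)})^{-1}$ is again upper triangular with diagonal entries $(c_{11}^{(i-1)})^{-1}, (c_{22}^{(i-1)})^{-1}$, and the conjugation by $\Delta_{\kvec^{(i-1)}}^{-1}$ on its right only scales the off-diagonal entry by a power of $p$. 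Since $C^{(i-1)}\in\GL_2(\oh_F)$ we have $c_{11}^{(i-1)}, c_{22}^{(i-1)}\in\oh_F^*$, and similarly $c_{11}^{(i)}, c_{22}^{(i)}\in\oh_F^*$.

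The key observation is that because the lower-left entry of $C^{(i)}$ is zero, the second row of $C^{(i)}A^{(i)}$ is just $c_{22}^{(i)}$ times the second row of $A^{(i)}$, namely $(c_{22}^{(i)}a_{21}^{(i)},\, c_{22}^{(i)}a_{22}^{(i)})$. Multiplying on the right by $\Delta_{\kvec^{(i-1)}}(C^{(i-1)})^{-1}\Delta_{\kvec^{(i-1)}}^{-1}$ and tracking the $(2,1)$ and $(2,2)$ entries gives exactly the formulas
\[
b_{21}^{(i)} = \frac{(a_{21}c_{11})^{(i)}}{c_{22}^{(i-1)}}, \qquad b_{22}^{(i)} = \frac{(a_{22}c_{22})^{(i)} + (a_{21}c_{12})^{(i)} p^{k_i}}{c_{22}^{(i-1)}},
\]
as asserted. (This is the only computational step, and it is entirely routine.)

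From the first formula, since $c_{11}^{(i)}, c_{22}^{(i-1)} \in \oh_F^*$, we see that $b_{21}^{(i)}$ and $a_{21}^{(i)}$ differ by a unit, so one lies in $\oh_F^*$ if and only if the other does. For the second formula, this is where the hypothesis $k_i > 0$ enters: reducing modulo the maximal ideal $\m \subset \oh_F$ kills the $p^{k_i}$ term, yielding
\[
b_{22}^{(i)} \equiv a_{22}^{(i)} \cdot \frac{c_{22}^{(i)}}{c_{22}^{(i-1)}} \pmod{\m},
\]
and the factor $c_{22}^{(i)}/c_{22}^{(i-1)}$ is a unit. Hence $b_{22}^{(i)} \in \oh_F^*$ iff $a_{22}^{(i)} \in \oh_F^*$, completing the proof. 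There is no real obstacle here; the only place any care is needed is the bookkeeping for the matrix product, and the substantive point is simply that positivity of $k_i$ forces $p^{k_i}$ into $\m$ so that the $(2,2)$ entry is controlled by the diagonal term.
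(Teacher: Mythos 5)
Your proof is correct and is essentially the paper's own argument: you compute the second row of the parabolic conjugation, obtain the same formulas for $b_{21}^{(i)}$ and $b_{22}^{(i)}$, and conclude from the fact that $c_{11}^{(i)},c_{22}^{(i)}\in\oh_F^*$. The only difference is that you spell out the final step (reducing modulo $\m$ and using $k_i>0$ to kill the $p$-power term in the $(2,2)$ entry), which the paper leaves implicit.
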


\begin{lem}\label{para-equiv-killing-lem}
	For all $i\in\Z/f\Z$, suppose $k_i>0$ and $a^{(i)}_{2\ell}\in\oh_F^*$ for some $\ell\in\{1,2\}$. Then there exists a parabolic equivalent change of basis $(C^{(i)})\in\Para_\kvec(\oh_F)$ such that by setting $(C^{(i)})*_\Para(A^{(i)})=(B^{(i)})$, then $b_{1\ell}^{(i)}=0$.
	\begin{proof}
		We must treat two cases separately. First suppose that there exists some $r\in\Z/f\Z$ such that $A^{(r)}$ has $a^{(r)}_{21}\in\oh_F^*$. For any fixed $i\in\Z/f\Z$, observe that if $a_{2\ell}^{(i)}\in\oh_F^*$ then by taking 
		\begin{equation}\label{para-change-equ}
		C^{(i)}=\begin{pmatrix}
		1 & -\frac{a_{1\ell}^{(i)}}{a^{(i)}_{2\ell}} \\ 0 & \frac{1}{a^{(i)}_{2\ell}}
		\end{pmatrix}\in\Para_{\kvec^{(i)}}(\oh_F)\hspace{.25cm}\text{ and }\hspace{.25cm} C^{(j)}=I\hspace{.25cm}\text{ for }\hspace{.25cm}j\neq i
		\end{equation}
		will give $B^{(i)}$ with $b^{(i)}_{1\ell}=0$ and $b^{(i)}_{2\ell}=1$. We notice that this change of basis also affects the right column of $B^{(i+1)}$ (but not the invertibility of $b^{(i+1)}_{22}$ by Lemma \ref{para-equiv-inv-lem})and does not affect $B^{(j)}$ for $j\notin \{i,i+1\}$. Since this base change is in $\Para_\kvec(\oh_F)$ and we are only concerned with matrices up to parabolic equivalence, then we can safely reset $(A^{(i)})$ to be $(B^{(i)})$ and repeat this process of \ref{para-change-equ} for $i+1$ and so on until we return to $i$ taking the indexes modulo $f$. If we start this process at the aforementioned $A^{(r)}$ then when we perform the final step to base change $A^{(r-1)}$ then the left hand column of $A^{(r)}$ is unaffected.  The result is a parabolic equivalent $f$-tuple $(B^{(i)})$ with one column which looks like $\begin{psmallmatrix}
			0 \\ 1
		\end{psmallmatrix}$.
	
		The other case is when there is no such $r$ with $a^{(r)}_{21}\in\oh_F^*$; Hence, we want to make $a^{(i)}_{12}=0$ for all $i\in\Z/f\Z$. Here, the above will not work as the final step of the process will make the upper right hand entry of the initial matrix non-zero. Instead, let us take
		\[C^{(i)}=\begin{pmatrix}
			1 & -\frac{a_{12}^{(i)}}{a^{(i)}_{22}} \\ 0 & \frac{1}{a^{(i)}_{22}}
		\end{pmatrix}\in\Para_{\kvec^{(i)}}(\oh_F)\hspace{.25cm}\text{ for all }\hspace{.25cm}i\in\Z/f\Z.\]
		The result will leave $(B^{(i)})$ with $b^{(i)}_{22}\in\oh_F^*$ by Lemma \ref{para-equiv-inv-lem} and $\nu_p(b^{(i)}_{12})\ge k_i$. Since $k_i>0$ for all $i\in\Z/f\Z$ then by resetting $(A^{(i)})$ to be $(B^{(i)})$ and repeating the process, we will have that $\nu_p(b^{(i)}_{12})\rightarrow\infty$ and $b^{(i)}_{22}\in\oh_F^*$.
	\end{proof}
\end{lem}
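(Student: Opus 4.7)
The plan is to construct an explicit parabolic base change $(C^{(i)}) \in \Para_{\kvec}(\oh_F)$ that clears the target entries. The natural candidate, inspired by row reduction, is
\[C^{(i)} = \begin{pmatrix} 1 & -a^{(i)}_{1\ell}/a^{(i)}_{2\ell} \\ 0 & 1/a^{(i)}_{2\ell} \end{pmatrix},\]
which is well-defined because $a^{(i)}_{2\ell} \in \oh_F^*$; left multiplication of $A^{(i)}$ by this matrix alone zeroes out the $(1,\ell)$-entry. The subtlety is that parabolic equivalence also acts on $A^{(i)}$ from the right by the conjugate $\Delta_{\kvec^{(i-1)}} (C^{(i-1)})^{-1} \Delta_{\kvec^{(i-1)}}^{-1}$, so adjustments at index $i-1$ disturb the matrix at index $i$. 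A quick calculation shows this conjugate remains upper triangular with off-diagonal entry involving $p^{k_{i-1}}$, which will be the key structural input.

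I would split into two cases depending on whether $\ell$ can be taken to be $1$ for some index. First, suppose there exists $r$ with $a^{(r)}_{21} \in \oh_F^*$. Apply the killing base change one index at a time, cyclically in the order $r, r+1, \dots, r-1$, resetting $(A)$ to $(B)$ after each step. By Lemma \ref{para-equiv-inv-lem} the relevant bottom-row entry remains a unit at each stage, so the recipe stays applicable. The key observation, confirmed by a short computation, is that when the procedure wraps back to $r$, the right action there is upper triangular with a purely diagonal first column, so the $\begin{psmallmatrix} 0 \\ \ast \end{psmallmatrix}$ first column we engineered at $r$ is preserved up to unit scaling and its $(1,1)$-zero survives.

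In the complementary case every $a^{(i)}_{21}$ is a non-unit, forcing $a^{(i)}_{22} \in \oh_F^*$ and $\ell(i) = 2$ throughout. Here the cyclic procedure fails because the off-diagonal $p^{k_{i-1}}$-twisted right action at $i-1$ re-introduces mass into the $(1,2)$-slot at index $i$, undoing earlier work. Instead I would apply the killing base change at all indices simultaneously, then iterate. A direct matrix multiplication shows the new $(1,2)$-entries acquire a factor of $p^{k_{i-1}}$ (and the $(2,2)$-unit condition is preserved by Lemma \ref{para-equiv-inv-lem}), so successive iterations drive the $(1,2)$-entries to zero $p$-adically. The main obstacle is then making the convergence rigorous: one must check that the composite of infinitely many parabolic base changes still lies in $\Para_{\kvec}(\oh_F)$, which should follow from the $p$-adic completeness of $\oh_F$ together with the hypothesis $k_i > 0$ yielding strict contraction at each step.
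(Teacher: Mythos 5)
Your proposal is correct and follows essentially the same route as the paper: the same unipotent-times-scaling base change $C^{(i)}$, applied cyclically starting from an index $r$ with $a^{(r)}_{21}\in\oh_F^*$ in the first case (with the same observation that the wrap-around right action is upper triangular and therefore leaves the engineered first column at $r$ intact), and applied simultaneously at all indices and iterated in the second case, using $k_i>0$ to gain a factor of $p^{k_{i-1}}$ in the $(1,2)$-entries at each pass. The convergence point you flag at the end is likewise left implicit in the paper; it is most cleanly handled by dropping the $1/a^{(i)}_{2\ell}$ scaling from the iterated base changes, so that the successive parabolic matrices tend to the identity and their infinite product converges in $\Para_{\kvec}(\oh_F)$, which does not affect the vanishing of the $(1,2)$-entries in the limit.
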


\noindent The results of the above lemmas motivates the following definition.

\begin{defin}\label{matrix-Type-def}
	We define a \textit{Type} matrix to be:
	\begin{itemize}
		\item Type $\I$ matrix: $\begin{pmatrix}0 & m_{1} \\ 1 & m_{2} \end{pmatrix}$ where $m_{1}\in\oh_F^*$ and $m_{2}\in\oh_F$;
		\item Type $\II_\alpha$ matrix: $\begin{pmatrix}m_{1} & 0 \\ m_{2} & \alpha\end{pmatrix}$ where $m_{1},\alpha\in\oh_F^*$ and $m_{2}\in\varpi\oh_F$.
	\end{itemize}
	We define the associated Type map to be $\Ts:\GL_2(\oh_F)\rightarrow\{\I,\II\}$ where for any $A\in\GL_2(\oh_F)$, $\Ts(A)=\I$ if and only if $a_{21}\in\oh_F^*$ and $\Ts(A)=\II$ otherwise. 
\end{defin}

In other words, the above definition says that for an $f$-tuple $(A^{(i)})\in\GL_2(\oh_F)^f$, for each $i$, $\Ts(A^{(i)})=\I$ or $\II$ whenever $(A^{(i)})\sim_{\kvec}(B^{(i)})$ with $B^{(i)}$ being a Type $\I$ or $\II_\alpha$ matrix respectively. We will refer to the $f$-tuple $(\Ts(A^{(0)}),\dots,\Ts(A^{(f-1)}))$ as being the \textit{Type combination} of $(A)$. Since parabolic equivalence does not effect the invertibility of the bottom row by Lemma \ref{para-equiv-inv-lem}, then the Type map is well-defined.

\begin{prop}\label{class-para-equiv-classes-prop}
	The parabolic equivalence classes of $\GL_2(\oh_F)^f/\sim_{\kvec}$ for $k_i>0$ may be represented by an $f$-tuple of matrices $(A^{(i)})$ satisfying one of two cases:
	\begin{enumerate}
		\item For each $i$, the matrix $A^{(i)}$ is a Type $\I$ or $\II_1$ matrix;
		\item For all $i$, the matrix $A^{(i)}$ is a Type $\II_{\alpha^{(i)}}$ matrix for some $\alpha^{(i)}\in\oh_F^*$.
	\end{enumerate}
	\begin{proof}
		For a general equivalence class $[(B^{(i)})]\in\GL_2(\oh_F)^f/\sim_{\kvec}$, Lemma \ref{para-equiv-inv-lem} ensures that elements of the bottom row of every member of this class has the same invertiblity statuses. The proposition follows from the proof of Lemma \ref{para-equiv-killing-lem}. Namely, if for at least one $B^{(i)}$ we have $b^{(i)}_{21}\in\oh_F^*$ then we are in the case of $\mathit{(a)}$, otherwise, we are in the case of $\mathit{(b)}$.
	\end{proof}
\end{prop}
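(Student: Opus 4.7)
The plan is to start with an arbitrary class $[(B^{(i)})]\in\GL_2(\oh_F)^f/\sim_{\kvec}$ and apply Lemmas \ref{para-equiv-inv-lem} and \ref{para-equiv-killing-lem} to extract a representative in the desired normal form. The first observation is that invertibility of $A^{(i)}$ forces at least one of $b^{(i)}_{21}, b^{(i)}_{22}$ to lie in $\oh_F^*$, and by Lemma \ref{para-equiv-inv-lem} these invertibility statuses are invariants of the class. This immediately produces the dichotomy: either (a) there is some $r\in\Z/f\Z$ with $b^{(r)}_{21}\in\oh_F^*$, or (b) $b^{(i)}_{21}\in\varpi\oh_F$ (and hence $b^{(i)}_{22}\in\oh_F^*$) for every $i$.

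In Case (a), I would apply the first procedure in the proof of Lemma \ref{para-equiv-killing-lem}: starting at index $r$ and cascading cyclically through $r+1, r+2,\dots, r-1$, at each index $i$ choose $\ell=1$ whenever $b^{(i)}_{21}\in\oh_F^*$ and $\ell=2$ otherwise, and apply the change of basis (para-change-equ). This produces Type $\I$ matrices at the indices where $b^{(i)}_{21}$ is a unit and Type $\II_1$ matrices at the remaining indices, since in the $\ell=2$ case one normalizes $b^{(i)}_{22}=1$. The key point, which I would emphasize explicitly, is that the base change at index $i$ only alters the right-hand column of the matrix at index $i+1$ (this is the computation worked out inside the lemma), so the normalization achieved at prior indices is not disturbed. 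When the cascade closes up by base-changing at $r-1$, only the right column of the matrix at index $r$ is touched, and since $\det B^{(r)}\in\oh_F^*$ together with the left column $(0,1)^T$ forces the $(1,2)$-entry to remain a unit, the Type $\I$ form at $r$ is preserved.

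In Case (b), I would invoke the second, asymptotic half of the proof of Lemma \ref{para-equiv-killing-lem}: apply the $\ell=2$ change of basis simultaneously at every $i$, reset $(B^{(i)})$ to the new tuple, and iterate. Because $k_i>0$ for all $i$, each iteration multiplies the $(1,2)$-entries by factors of $p^{k_i}$, so the sequence of matrices converges $p$-adically: the $(1,2)$-entries tend to $0$, while $b^{(i)}_{22}$ converges to some $\alpha^{(i)}\in\oh_F^*$ (and $b^{(i)}_{21}$ remains in $\varpi\oh_F$ throughout by Lemma \ref{para-equiv-inv-lem}). Taking the limit gives a representative that is Type $\II_{\alpha^{(i)}}$ at every $i$, which is exactly case (b).

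The main conceptual obstacle is the closing step of Case (a): one has to verify that cycling the cascade all the way around does not destroy the normalization achieved at the starting index. This is why the argument hinges on the careful bookkeeping of which entries are affected by the conjugation $\Delta_{\kvec^{(i-1)}}(C^{(i-1)})^{-1}\Delta_{\kvec^{(i-1)}}^{-1}$; everything else reduces to bookkeeping and convergence that is already packaged inside Lemma \ref{para-equiv-killing-lem}.
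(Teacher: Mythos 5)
Your proposal is correct and follows the paper's own argument essentially verbatim: the dichotomy via Lemma \ref{para-equiv-inv-lem} on the invertibility of the bottom-row entries, then the cascading procedure (case (a)) and the iterative/limiting procedure (case (b)) from the proof of Lemma \ref{para-equiv-killing-lem}, with you merely making explicit the wrap-around bookkeeping at the starting index $r$ and the passage to the limit. One small caution on the limit in case (b): the simultaneous base change shifts the $(2,2)$-entries cyclically by one index at each iteration (the new $b_{22}^{(i)}$ is essentially the old $a_{22}^{(i-1)}$), so the matrices converge only along a subsequence (e.g.\ iterations that are multiples of $f$), or one can invoke compactness of the $\Para_\kvec(\oh_F)$-orbit to land on a Type $\II_{\alpha^{(i)}}$ representative; this does not affect the conclusion and is no less rigorous than the paper's own treatment.
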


\begin{rem}\label{type-ambig-rem}
	As a result of our definition for Type, we see that the classification is not unique. In particular, there is ambiguity when both entries on the bottom row are invertible. We make this particular choice as it behaves better with respect to irreducibility, see Proposition \ref{red-fil-phi-mod-prop}.
\end{rem}


\subsection{Models for Crystalline Representations}\label{cris-iso-class-sec}

Recall by Theorem \ref{para-equiv-class-thm}, we have a bijection
\[\Theta:\GL_2(\oh_F)^f/\sim_{\kvec}\longrightarrow\SD_{\oh_F\otimes_{\Zp}\oh_K}^{\varphi,\kvec}\]
given by taking an $f$-tuple $(A)=(A^{(i)})$ to a strongly divisible lattice $\Theta(A)=L(A)$ with a basis adapted to its induced filtration so that the $i$-th partial Frobenius action takes the form $\varphi^{(i)}=A^{(i)}\Delta_{\kvec^{(i-1)}}$ with $\Delta_{\kvec^{(i)}}=\Diag(p^{k_i},1)$ by Remark \ref{weight-twist-rem} and with the additional assumption that $k_i>0$ for all $i\in\Z/f\Z$ due to Lemma \ref{para-equiv-killing-lem}. In Section \ref{para-equiv-class-sec}, we observed that we may view each $A^{(i)}$ as being a matrix of one of two Types so that we have effectively chosen simple representatives of the isomorphism classes of strongly divisible lattices.

Recall that we have defined
\[D(A)\coloneqq L(A)\otimes_{\oh_F\otimes_{\Zp}\oh_K}(F\otimes_{\Q_p}K)\]
to be the weakly admissible filtered $\varphi$-module which contains the strongly divisible lattice $L(A)$. All weakly admissible filtered $\varphi$-module necessarily arises out of this construction by Proposition \ref{weak-admis-equiv-SD-prop}. Hence, up to isomorphism, $D(A)=\prod_i D(A)^{(i)}$ admits a basis adapted to the filtration $\{\eta_1^{(i)},\eta_2^{(i)}\}$ such that
\begin{align*}
	[\varphi^{(i)}]_\eta &=\begin{dcases}
		\begin{pmatrix}
			0 & a_1^{(i)} \\ p^{k_{i-1}} & a_2^{(i)}
		\end{pmatrix} & \text{\normalfont Type $\I$} \\
		\begin{pmatrix}
			a_1^{(i)}p^{k_{i-1}} & 0 \\ a_2^{(i)}p^{k_{i-1}} & \alpha^{(i)}
		\end{pmatrix} & \text{\normalfont Type $\II_{\alpha^{(i)}}$}
	\end{dcases} & \Fil^j D(A)^{(i)}&=\begin{dcases}
		D(A)^{(i)} & j\le 0 \\
		F(\eta_1^{(i)}) &  0<j\le k_i \\
		0 & k_i<j.
	\end{dcases}
\end{align*}

\noindent Since we are only concerned with irreducible representations, then we may impose more structure on our partial Frobenius matrices to rid ourselves of some reducible cases and make this description a little bit cleaner.

\begin{prop}\label{red-fil-phi-mod-prop}
	Keeping with notation as above, the weakly admissible filtered $\varphi$-module $D(A)$ is reducible in $\MF_{F\otimes_{\Qp}K}^{\varphi,w.a.}$ in each of the following cases:
	\begin{enumerate}
		\item The $i$-th partial Frobenius $\varphi^{(i)}$ is Type $\II_{\alpha^{(i)}}$ for all $i\in\Z/f\Z$;
		\item The product $\prod_{i\in\Ss} a_2^{(i)}\in p^{w}\oh_F^*$ where $\Ss=\{i\in\Z/f\Z: \varphi^{(i)} \text{ is Type }\I\}$ and $w=\sum_{i\in J}k_i$ for some subset $J\subseteq \Ss$ (if $J=\emptyset$ then $w=0$).
	\end{enumerate}
\end{prop}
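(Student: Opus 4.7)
The plan is to exhibit, in each case, an explicit proper $\varphi$-stable $F\otimes_{\Qp}K$-submodule $L = \prod_i L^{(i)} \subset D(A)$ whose induced filtration makes it weakly admissible. Since $\MF_{F\otimes_{\Qp}K}^{\varphi,w.a.}$ is an abelian subcategory of $\MF_{F\otimes_{\Qp}K}^{\varphi}$, it suffices to produce a strict $\varphi$-stable sub-line with $t_H(L) = t_N(L)$; weak admissibility of the corresponding quotient then follows from additivity of Newton and Hodge numbers on short exact sequences, since the surrounding $D(A)$ is already weakly admissible by Proposition \ref{weak-admis-equiv-SD-prop}.

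For case (a), every partial Frobenius matrix is lower triangular in the adapted basis and $\varphi^{(i)}(\eta_2^{(i-1)}) = \alpha^{(i)}\eta_2^{(i)}$. Thus $L^{(i)} := F\eta_2^{(i)}$ defines a $\varphi$-stable line; because $\eta_2^{(i)} \notin F\eta_1^{(i)} = \Fil^{j} D(A)^{(i)}$ for $0 < j \le k_i$, the induced filtration on $L$ is trivial, so $t_H(L) = 0$. Since every $\alpha^{(i)} \in \oh_F^*$, we also have $t_N(L) = 0$, and $L$ is a proper weakly admissible subobject, proving reducibility.

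For case (b), the strategy is to construct a $\varphi$-stable line $L$ of total Hodge weight exactly $w = \sum_{j\in J} k_j$. At each Type $\II$ embedding (which lies outside $\Ss$, hence outside $J$) the argument of case (a) yields the stable choice $L^{(i)} = F\eta_2^{(i)}$. At each Type $\I$ embedding outside $J$ I parameterize $L^{(i)} = F(\eta_2^{(i)} + \gamma^{(i)}\eta_1^{(i)})$ for an unknown $\gamma^{(i)} \in F$, and at each Type $\I$ embedding inside $J$ I take $L^{(i)} = F\eta_1^{(i)}$ outright. The Frobenius-compatibility $\varphi^{(i)}(L^{(i-1)}) = L^{(i)}$ unravels into an explicit rational recursion for the $\gamma^{(i)}$'s that closes cyclically over $\Z/f\Z$; using $k_i > 0$, this system admits a solution in $F$ via a Hensel-type fixed-point argument, and the same argument controls $\nu_p(\gamma^{(i)})$ so that the prescribed Hodge weights are correctly realized.

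Reading off the eigenvalues $\lambda^{(i)}$ of each partial Frobenius on $L$ gives $\lambda^{(i)} = a_2^{(i)} + \gamma^{(i-1)} p^{k_{i-1}}$ at Type $\I$ indices outside $J$ and $\lambda^{(i)} = \alpha^{(i)} + \gamma^{(i-1)} a_2^{(i)} p^{k_{i-1}}$ at Type $\II$ indices, from which tracking valuations yields $\prod_i \lambda^{(i)} = u \cdot \prod_{i \in \Ss} a_2^{(i)}$ for some $u \in \oh_F^*$. The hypothesis $\prod_{i \in \Ss} a_2^{(i)} \in p^w \oh_F^*$ then forces $t_N(L) = \nu_p\bigl(\prod_{i \in \Ss} a_2^{(i)}\bigr) = w = t_H(L)$, giving weak admissibility. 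The main obstacle will be the valuation bookkeeping at Type $\I$ indices belonging to $J$, where the line re-initializes into $F\eta_1^{(i)}$ and the recursion picks up extra factors that must be absorbed into the unit $u$ rather than contribute to the $p$-adic valuation of the product.
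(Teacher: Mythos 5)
Your case (a) is correct, and is in fact more direct than the paper, which deduces both cases from a trace computation for the composite Frobenius $\varphi^f=\prod_i A^{(i)}\Delta_{\kvec^{(i-1)}}$ together with the lattice-level Lemma \ref{eigan-red-lem}: the line $\prod_i F\eta_2^{(i)}$ is visibly $\varphi$-stable with $t_H=t_N=0$. Case (b), however, has a genuine gap. A $\varphi$-stable line in $D(A)$ is an eigenline of $\varphi^f$, pushed around by the partial Frobenii; you cannot prescribe its position at each embedding and then hope to ``solve a recursion.'' Your ansatz imposes exact linear conditions that are generically inconsistent: to enter an index $i\in J$ (i.e.\ to have $\varphi^{(i)}(L^{(i-1)})\subseteq F\eta_1^{(i)}$) you are forced to take $\gamma^{(i-1)}=-a_2^{(i)}/p^{k_{i-1}}$ exactly; it is outright impossible if $i-1\in J$ (since $\varphi^{(i)}(\eta_1^{(i-1)})=p^{k_{i-1}}\eta_2^{(i)}$) or if $i-1\in\Ts$ with $L^{(i-1)}=F\eta_2^{(i-1)}$ (the image $a_1^{(i)}\eta_1^{(i)}+a_2^{(i)}\eta_2^{(i)}$ is not proportional to $\eta_1^{(i)}$ unless $a_2^{(i)}=0$); and to enter a Type $\II$ index with $L^{(i)}=F\eta_2^{(i)}$ you need $\gamma^{(i-1)}=0$ exactly, because $a_1^{(i)}\in\oh_F^*$. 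Since going forward the recursion determines $\gamma^{(i)}=a_1^{(i)}/\lambda^{(i)}$, these entry/exit constraints amount to exact polynomial identities among the $a_j^{(i)}$, which do not follow from the purely valuation-theoretic hypothesis $\prod_{i\in\Ss}a_2^{(i)}\in p^w\oh_F^*$; no Hensel-type fixed-point argument can manufacture them.

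More fundamentally, the two numbers you must equate are of different natures, and valuation bookkeeping cannot bridge them. The Hodge number only picks up $k_i$ when $L^{(i)}$ coincides \emph{exactly} with $\Fil^{k_i}D(A)^{(i)}=F\eta_1^{(i)}$; bounding $\nu_p(\gamma^{(i)})$ only ever shows the eigenline is close to $F\eta_1^{(i)}$, never equal to it, so generically $t_H(L)=0$ even when the hypothesis of (b) holds with $J\neq\emptyset$. Likewise your claim $t_N(L)=\nu_p\bigl(\prod_{i\in\Ss}a_2^{(i)}\bigr)$ is not justified: eigenvalue valuations of $\varphi^f$ are governed by the Newton polygon of its characteristic polynomial and need not equal $w$ (already for $f=1$, Type $\I$, $\nu_p(a_2)=k$, both eigenvalues have valuation $k/2$). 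The paper sidesteps both issues by working integrally: in Lemma \ref{eigan-red-lem} one starts from an eigenvalue $\lambda\in p^w\oh_F^*$ of $\varphi^f$ acting on the strongly divisible lattice $L(A)$ (whose existence is extracted from the trace of $\varphi^f$, not from an explicit line), forms the rank-one $\varphi$-stable sublattice spanned by an eigenvector, and uses strong divisibility of the ambient $L(A)$ to force the induced filtration jumps of that sublattice to match the divisibility of the $\lambda^{(i)}$'s, so the sublattice is strongly divisible; reducibility of $D(A)$ then follows by extending scalars, using Proposition \ref{weak-admis-equiv-SD-prop}. That integral input is precisely what stands in for your unproven ``recursion plus valuation control,'' and without it (or an argument producing exact, not approximate, coincidence of the eigenline with the filtration at the embeddings in $J$) your case (b) does not go through.
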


\noindent This proposition relies on the following lemma regarding reducibility of strongly divisible lattices in the rank two case.
\begin{lem}\label{eigan-red-lem}
	Continuing to use notations as in Proposition \ref{red-fil-phi-mod-prop}, let $\varphi^f=\prod_{i=0}^{f-1}A^{(i)}\Delta_{\kvec^{(i-1)}}$ be the (linear) Frobenius on $L(A)^{(0)}$. If $\varphi^f$ has an eigenvalue $\lambda$ with $\lambda\in p^w\oh_F^*$ where $w=\sum_{i\in J}k_i$ for some subset $J\subseteq \Z/f\Z$ (if $J=\emptyset$ then $w=0$), then $L(A)$ is reducible in $\SD_{\oh_F\otimes_{\Z_p}\oh_K}^{\varphi}$.
	\begin{proof}
				By assumption there exists simultaneously nonzero $x^{(0)},y^{(0)}\in\oh_F$ such that
		\[\prod_{i=0}^{f-1}A^{(i)}\Delta_{\kvec^{(i-1)}}\begin{pmatrix}
			x^{(0)} \\ y^{(0)}
		\end{pmatrix}=\varphi^f\begin{pmatrix}
			x^{(0)} \\ y^{(0)}
		\end{pmatrix}=\lambda\begin{pmatrix}
			x^{(0)} \\ y^{(0)}
		\end{pmatrix}=\prod_{i=0}^{f-1}\lambda^{(i-1)}\begin{pmatrix}
			x^{(0)} \\ y^{(0)}
		\end{pmatrix}\]
		where each $\lambda^{(i)}\in \oh_F^*$ if $i\notin J$ and $\lambda^{(i)}\in p^{k_i}\oh_F^*$ otherwise. Strong divisibility in the sense of Lemma \ref{strong-div-idem-decomp-lem} then provides the existence of simultaneously non-zero $x^{(i)}, y^{(i)}\in\oh_F$ such that 
		\[A^{(i)}\Delta_{\kvec^{(i-1)}}\begin{pmatrix}
			x^{(i-1)} \\ y^{(i-1)}
		\end{pmatrix}=\lambda^{(i-1)}\begin{pmatrix}
			x^{(i)} \\ y^{(i)}
		\end{pmatrix}\]
		for all $i\in\Z/f\Z$. Hence, each $L(A)^{(i)}$ admits a $\varphi$-stable submodule $L'^{(i)}=x^{(i)}\eta_1^{(i)}+y^{(i)}\eta_2^{(i)}$ so we need only show that $L'=\prod_i L'^{(i)}\subset L(A)$ is strongly divisible using Lemma \ref{strong-div-idem-decomp-lem}.
		
		For any $i\in\Z/f\Z$, observe that since $L'^{(i)}$ is of rank 1, then $\Fil^j L'^{(i)}$ has only one filtration break from $L'^{(i)}$ to $0$. In particular, there is an integer $r \ge0$ so that 
		\[\Fil^j L'^{(i)}=\begin{dcases}
			L'^{(i)} & j\le r \\ 0 & j>r.
		\end{dcases}\]
		Hence, by the arguments above we have
		\[\varphi^{(i)}(\Fil^jL'^{(i-1)})=A^{(i)}\Delta_{\kvec^{(i-1)}}(\Fil^jL'^{(i-1)})=\begin{dcases}
			\lambda^{(i-1)}L'^{(i)} & j\le r \\ 0 & j>r.
		\end{dcases}\]
		However, since $L'^{(i)}=x^{(i)}\eta_1^{(i)}+y^{(i)}\eta_2^{(i)}\subset L(A)^{(i)}$ then strong divisibility of $L(A)^{(i)}$ forces $r=0$ whenever $i-1\notin J$ and $r=k_{i-1}$ when $i-1\in J$. We may then apply Lemma \ref{strong-div-idem-decomp-lem} to deduce strong divisibility of $L'$.
	\end{proof}
\end{lem}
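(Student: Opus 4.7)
The strategy is to build an explicit rank-one $\varphi$-stable sublattice $L' \subset L(A)$ starting from an eigenvector of $\varphi^f$ in $L(A)^{(0)}$, and then verify via Lemma \ref{strong-div-idem-decomp-lem} that $L'$ inherits the strongly divisible structure. The construction will produce a factorization $\lambda = \prod_i \lambda^{(i)}$ compatible with the hypothesis on $\nu_p(\lambda)$, and the essential content of the proof is checking that the filtration breaks on the rank-one pieces match the $p$-adic valuations of the corresponding partial scaling factors.

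The plan is as follows. First, take an eigenvector $v^{(0)} = x^{(0)} \eta_1^{(0)} + y^{(0)} \eta_2^{(0)} \in L(A)^{(0)}$ for $\lambda$, scaled so that $(x^{(0)}, y^{(0)}) \in \oh_F^2$ is nonzero, and write $\lambda = \prod_{i=0}^{f-1} \lambda^{(i)}$ with $\nu_p(\lambda^{(i)}) = k_i$ if $i \in J$ and $\nu_p(\lambda^{(i)}) = 0$ if $i \notin J$, which is possible by the hypothesis $\nu_p(\lambda) = \sum_{i \in J} k_i$. Define iteratively $(x^{(i+1)}, y^{(i+1)})^T = (\lambda^{(i)})^{-1} A^{(i+1)} \Delta_{\kvec^{(i)}} (x^{(i)}, y^{(i)})^T$, so that the eigenvector relation closes the cycle and gives a consistent datum over $F$. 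Integrality of each $(x^{(i)}, y^{(i)})$ must then be checked: for $i \notin J$ this is automatic since $\lambda^{(i)} \in \oh_F^*$, and for $i \in J$ it requires that the lifted vector lies in $\Fil^{k_i} L(A)^{(i)}$, at which point strong divisibility of $L(A)$ (Lemma \ref{strong-div-idem-decomp-lem}) ensures that $\varphi^{(i+1)}$ moves it into $p^{k_i} L(A)^{(i+1)}$, so the division is allowed.

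Next, set $L'^{(i)} = \oh_F \cdot (x^{(i)} \eta_1^{(i)} + y^{(i)} \eta_2^{(i)})$ and $L' = \prod_i L'^{(i)}$. By construction $L'$ is $\varphi$-stable, with $\varphi^{(i+1)}$ acting on $L'^{(i)}$ as multiplication by $\lambda^{(i)}$. Because each $L'^{(i)}$ has rank one, its induced filtration has a single break at some $r^{(i)} \ge 0$, and a direct computation of $L'^{(i)} \cap F \eta_1^{(i)}$ shows $r^{(i)} \in \{0, k_i\}$, with $r^{(i)} = k_i$ precisely when $y^{(i)} = 0$.

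Finally, Lemma \ref{strong-div-idem-decomp-lem} reduces strong divisibility of $L'$ to the two conditions $\varphi^{(i+1)}(\Fil^j L'^{(i)}) \subseteq p^j L'^{(i+1)}$ and $\sum_{j \in \Z} p^{-j} \varphi^{(i+1)}(\Fil^j L'^{(i)}) = L'^{(i+1)}$ for each $i$. The first is automatic from the inclusion $L' \subset L(A)$ and strong divisibility of $L(A)$. On the rank-one piece the second collapses to $p^{-r^{(i)}} \lambda^{(i)} L'^{(i+1)} = L'^{(i+1)}$, i.e. $\nu_p(\lambda^{(i)}) = r^{(i)}$. The principal obstacle, and where the hypothesis on $\lambda$ does essential work, is showing that the propagated vectors $(x^{(i)}, y^{(i)})$ satisfy $y^{(i)} = 0$ exactly when $i \in J$: this matches the dichotomy $r^{(i)} \in \{0, k_i\}$ with the valuation of $\lambda^{(i)}$ and so produces the desired strongly divisible sublattice, proving that $L(A)$ is reducible in $\SD_{\oh_F \otimes_{\Zp} \oh_K}^{\varphi}$.
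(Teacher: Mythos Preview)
Your approach mirrors the paper's proof closely: both construct a rank-one sublattice $L'$ from an eigenvector of $\varphi^f$, propagate it through the partial Frobenii, and verify strong divisibility via Lemma~\ref{strong-div-idem-decomp-lem}. The decisive step in each argument is matching the filtration break $r^{(i)}$ of $L'^{(i)}$ with the valuation $\nu_p(\lambda^{(i)})$ of the partial scaling factor.

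There is, however, a circularity in your integrality check that you should repair. You fix the factorization $\lambda = \prod_i \lambda^{(i)}$ with $\nu_p(\lambda^{(i)}) = k_i$ for $i \in J$ \emph{before} propagating, and then justify the division by $p^{k_i}$ at stage $i \in J$ by saying the vector lies in $\Fil^{k_i} L(A)^{(i)}$, i.e.\ that $y^{(i)} = 0$. But you later flag the claim ``$y^{(i)} = 0$ exactly when $i \in J$'' as the principal obstacle still to be established; so the integrality argument assumes precisely what you say remains to be proved. Note also that the subset $J$ in the hypothesis is merely \emph{some} subset with $\sum_{i \in J} k_i = \nu_p(\lambda)$; there is no a priori reason it should coincide with $\{i : y^{(i)} = 0\}$, which is determined intrinsically by the eigenvector and the matrices $A^{(i)}$. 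The paper's proof avoids this by propagating first (using only that the images stay in $\oh_F^2$, which follows from $A^{(i)}\Delta_{\kvec^{(i-1)}}$ having integral entries) and then invoking strong divisibility of the ambient lattice $L(A)$ to force the filtration break $r$ to take the required value. You should reorganize in the same direction: read off the $r^{(i)}$ from the propagated vectors, and only then argue via strong divisibility of $L(A)$ together with the constraint $\nu_p(\lambda) = \sum_{i \in J} k_i$ that the second condition of Lemma~\ref{strong-div-idem-decomp-lem} holds.
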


\begin{proof}[Proof of Proposition \ref{red-fil-phi-mod-prop}]
	Let $L(A)=\prod_i L(A)^{(i)}$ be the strongly divisible lattice inside $D(A)=\prod_i D(A)^{(i)}$ with a basis adapted to the filtration $\{\eta_1^{(i)},\eta_2^{(i)}\}$ so that the $i$-th partial Frobenius is given by $\varphi^{(i)}=A^{(i)}\Delta_{\kvec^{(i-1)}}$. Since reducibility is preserved by scalar extension, then we need only display that we can apply Lemma \ref{eigan-red-lem} to each case.
	
	Consider the product $\varphi^f=\prod_{i=0}^{f-1}A^{(i)}\Delta_{\kvec^{(i-1)}}$ and observe that since the left column of each $A^{(i)}\Delta_{\kvec^{(i-1)}}$ is divisible by $p^{k_{i-1}}$ then the lower right entry of $\varphi^f$ must contain the product $\prod_{i,j}a_2^{(i)}\alpha^{(j)}$ for $i\in\Ss$ and $j\notin\Ss$ and any other term on the diagonal is necessarily divisible by $p^{w'}$ where $w'=\sum_{i\in J'} k_i$ for some subset $J'\subseteq \Z/f\Z$. Since the trace of $\varphi^f$ give the sum of the eigenvalues, then one of the eigenvalues takes the form $v=\prod_{i,j}a_2^{(i)}\alpha^{(j)}+p^{w'}\oh_F$. It then becomes clear that we can apply Lemma \ref{eigan-red-lem} to both cases since $\alpha^{(i)}\in \oh_F^*$.
\end{proof}

\noindent We are now ready to give the first of our main results. Recall that $V(A)\coloneqq V^*_\cris(D(A))\in\Rep_{\cris/F}^{\kvec}(G_K)$ is the image of $D(A)$ under the quasi-inverse functor $V^*_\cris$.

\begin{thm}\label{model-irred-cris-reps-thm}
	Let $V\in\Rep_{\cris/F}^{\kvec}(G_K)$ be an irreducible, two dimensional crystalline representation of $G_K$ with labeled Hodge-Tate weights $\{k_i,0\}_{i\in\Z/f\Z}$ where $k_i>0$. Then $V\cong V(A)$ where $D(A)\coloneqq\D^*_\cris(V(A))$ with each $A^{(i)}$ being Type $\I$ or $\II_1$. In particular, $D(A)=\prod_iD(A)^{(i)}$ has a basis $\{\eta_1^{(i)},\eta_2^{(i)}\}$ such that
	\begin{align*}
		[\varphi^{(i)}]_\eta &=\begin{dcases}
			\begin{pmatrix}
				0 & a_1^{(i)} \\ p^{k_{i-1}} & a_2^{(i)}
			\end{pmatrix} & \text{\normalfont Type $\I$} \\
			\begin{pmatrix}
				a_1^{(i)}p^{k_{i-1}} & 0 \\ a_2^{(i)}p^{k_{i-1}} & 1
			\end{pmatrix} & \text{\normalfont Type $\II$}
		\end{dcases} & \Fil^j D(A)^{(i)}&=\begin{dcases}
			D(A)^{(i)} & j\le 0 \\
			F(\eta_1^{(i)}) &  0<j\le k_i \\
			0 & k_i<j.
		\end{dcases}
	\end{align*}
	\begin{proof}
		Let $D=\D_\cris^*(V)$ and observe that Proposition \ref{weak-admis-equiv-SD-prop} and Theorem \ref{para-equiv-class-thm} come together to imply the existence of a $f$-tuple $(B)=(B^{(i)})\in\GL_2(\oh_F)^f$ such that $L(B)$ is a strongly divisible lattice inside $D$ so that $D=D(B)$. By Proposition \ref{class-para-equiv-classes-prop} there is a another $f$-tuple $(A)=(A^{(i)})\in\GL_2(\oh_F)^f$, with each $A^{(i)}$ being Type $\I$ or $\II_{\alpha^{(i)}}$, such that $(B)\sim_{\kvec}(A)$ so that $L(A)\cong L(B)$ by Theorem \ref{para-equiv-class-thm} once more. Hence $D(B)\cong D(A)$ so that $V\cong V(A)$. Since $V$ is irreducible then $D(A)$ is irreducible so that Lemma \ref{red-fil-phi-mod-prop} gives each $A^{(i)}$ the desired Type by Proposition \ref{class-para-equiv-classes-prop} again.
	\end{proof}
\end{thm}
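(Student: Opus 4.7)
The plan is to extract a strongly divisible lattice from $V$ and then translate the structural statement into a classification of parabolic equivalence classes. First, apply the functor $\D_\cris^*$ of Theorem \ref{CF-equiv-w-coeff-thm} to obtain a weakly admissible filtered $\varphi$-module $D \coloneqq \D_\cris^*(V)$ with labeled Hodge-Tate weights $\{0,k_i\}$. Because $\D_\cris^*$ is an anti-equivalence of categories, irreducibility of $V$ in $\Rep_{\cris/F}^{\kvec}(G_K)$ is equivalent to irreducibility of $D$ in $\MF_{F\otimes_\Qp K}^{\varphi, w.a.}$; this is the crucial hypothesis that will be leveraged at the end.

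Next, Proposition \ref{weak-admis-equiv-SD-prop} guarantees the existence of a strongly divisible lattice $L \subseteq D$ over $\oh_F \otimes_{\Zp} \oh_K$, and Proposition \ref{exist-adapt-basis} lets us choose a basis of each factor $L^{(i)}$ adapted to the induced filtration. With such a basis in hand, the $i$-th partial Frobenius has the shape $A^{(i)} \Delta_{\kvec^{(i-1)}}$ for some $A^{(i)} \in \GL_2(\oh_F)$, where $\Delta_{\kvec^{(i-1)}} = \Diag(p^{k_{i-1}}, 1)$ after the twist of Remark \ref{weight-twist-rem}. The bijection $\Theta$ of Theorem \ref{para-equiv-class-thm} then identifies $L \cong L(B)$ for some $f$-tuple $(B) \in \GL_2(\oh_F)^f$, and consequently $D \cong D(B)$, so $V \cong V(B)$.

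Now invoke Proposition \ref{class-para-equiv-classes-prop}: within the parabolic equivalence class $[(B)]$ there is a representative $(A)$ that either (i) has every $A^{(i)}$ of Type $\I$ or Type $\II_1$, or (ii) has every $A^{(i)}$ of Type $\II_{\alpha^{(i)}}$ for some $\alpha^{(i)} \in \oh_F^*$. Because $(A) \sim_{\kvec} (B)$, Theorem \ref{para-equiv-class-thm} gives $L(A) \cong L(B)$, and hence $D(A) \cong D$ and $V(A) \cong V$. It remains to rule out case (ii), and this is where irreducibility enters decisively: Proposition \ref{red-fil-phi-mod-prop}(a) asserts that if every partial Frobenius is Type $\II$ then $D(A)$ is reducible. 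Since $D$ (and therefore $D(A)$) is irreducible, case (ii) is impossible, so we must lie in case (i).

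The explicit description of $[\varphi^{(i)}]_\eta$ and $\Fil^j D(A)^{(i)}$ in the statement is then just a matter of reading off the construction. The adapted basis $\{\eta_1^{(i)}, \eta_2^{(i)}\}$ places $F(\eta_1^{(i)})$ in filtration degrees $0 < j \le k_i$ (and $D(A)^{(i)}$ in non-positive degrees, $0$ beyond $k_i$), while multiplying the Type $\I$ or Type $\II_1$ matrix $A^{(i)}$ on the right by $\Delta_{\kvec^{(i-1)}} = \Diag(p^{k_{i-1}}, 1)$ produces exactly the two matrix forms displayed in the theorem. I do not anticipate a major obstacle here: the proof is essentially assembly of earlier machinery, with the only substantive input being the exclusion of the all-Type-$\II$ case, which is already packaged into Proposition \ref{red-fil-phi-mod-prop}(a) via the eigenvalue argument of Lemma \ref{eigan-red-lem}.
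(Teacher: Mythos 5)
Your proposal is correct and follows essentially the same route as the paper's proof: extract a strongly divisible lattice via Proposition \ref{weak-admis-equiv-SD-prop} and Theorem \ref{para-equiv-class-thm} to write $D=D(B)$, pass to the simplified representative $(A)$ of Proposition \ref{class-para-equiv-classes-prop}, and use irreducibility together with Proposition \ref{red-fil-phi-mod-prop}(a) to exclude the all-Type-$\II_{\alpha}$ case, leaving each $A^{(i)}$ of Type $\I$ or $\II_1$. The only difference is that you spell out the intermediate steps (adapted bases, transfer of irreducibility through the anti-equivalence) slightly more explicitly than the paper does.
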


As a result, we will commonly say that the \textit{Type} of $V(A)$ is the Type of the partial Frobenius matrices of $\D^*_\cris(V(A))$. It follows that the $f$-tuple of matrices $(A)$ completely determines the isomorphism class of representations which $V(A)$ is apart of. Hence, to compute reductions, we need only consider those representations $V(A)$ of one of these $2f-1$ Type combinations. We will now write $\II=\II_1$ due to the exclusion of $\II_\alpha$ in the irreducible case for $\alpha\neq 1$.

\begin{rem}\label{recover-f=1-rem}
	It is not difficult to see that our description may be used to recover the $f=1$ classification of Breuil, see \ref{Qp-class-equ}, as we may use irreducibility to exclude the single Type $\II$ case and twist by a suitable unramified character. 
\end{rem}

\begin{rem}\label{type-intersect-rem}
	It should be pointed out that our notion of Type is only a property of strongly divisible lattices. In particular, a weakly admissible filtered $\varphi$-module may admit distinct strongly divisible lattices of different Types. This most obvious reason for this is due to the fact that parabolic equivalence over $F$ need not preserve invertibility. Indeed, Lemma \ref{para-equiv-inv-lem} does not hold over $F$.
\end{rem}

\section{Determining Explicit Kisin Modules}\label{Explicit-Kisin-mod-chap}

\subsection{Initial Frobenius Matrices}\label{inital-frob-sec}

The results of Subsection \ref{BK-chap} gives us a way to explicitly generate a Kisin module over $S_F$ from the data of a weakly admissible filtered $\varphi$-module. In Section \ref{models-irred-reps-chap}, we provided models for irreducible, two-dimensional crystalline representations of $G_K$ in terms of the Type of Frobenius matrix on strongly divisible lattices inside the associated rank-two weakly admissible filtered $\varphi$-modules. We may now use the algorithm described in Remark \ref{kis-alg-rem} to explicitly describe the Frobenius matrix on a Kisin module $\Ms$ over $S_F$ from the strongly divisible lattices described in Section \ref{models-irred-reps-chap}. We briefly recall the algorithm adjusted to our situation below.

Let $(A)=(A^{(i)})\in\GL_2(\oh_F)^f$. For a rank two, weakly admissible filtered $\varphi$-module $D(A)\in\MF_{F\otimes_{\Qp}K}^{\varphi,w.a.,\kvec}$ with each $\kvec^{(i)}=(k_i,0)$ by Remark \ref{weight-twist-rem}, let $\Ds(A)\coloneqq\D_\Bre(D(A))$ denote the corresponding Breuil module over $S_F$. Also let $\Ms(A)\coloneqq\M_\Kis(D(A))\otimes_{\oh_F}S_F$ denote the corresponding finite height Kisin module over $S_F$. By the results of Proposition \ref{BK-iso-prop} and the compatibility of the decomposition $D(A)=\prod_iD(A)^{(i)}$ with respect to the Breuil and Kisin equivalence functors, we have that $\D_\BK(\Ms(A))\cong\Ds(A)$ and we may describe the Frobenius matrix on $\Ms(A)$ in terms of the idempotent decomposition by the following steps:
\begin{enumerate}[label=\arabic*)]
	\item Choose $S_F$-bases $\{e_1^{(i)}, e_2^{(i)}\}$ of $\Ds(A)^{(i)}$ and $\{\alpha_1^{(i)},\alpha_2^{(i)}\}$ of $\Fil^{k_i}\Ds(A)^{(i)}$;
	\item Write $\varphi_{\Ds(A)}^{(i)}(e_1^{(i-1)},e_2^{(i-1)})=(e_1^{(i)},e_2^{(i)})X^{(i)}$ and $(\alpha_1^{(i)},\alpha_2^{(i)})=(e_1^{(i)},e_2^{(i)})B^{(i)}$ for matrices $X^{(i)},B^{(i)}\in\Mat_2(S_F)$;
	\item Then the Kisin module $\Ms(A)$ admits an $S_F$-basis $\{\eta_1^{(i)},\eta_2^{(i)}\}$ with $i$-th partial Frobenius given by $\varphi_{\Ms(A)}^{(i)}(\eta_1^{(i-1)},\eta_2^{(i-1)})=(\eta_1^{(i)},\eta_2^{(i)})\As^{(i)}$ where
	\[\As^{(i)}=E^{k_i}(B^{(i)})^{-1}X^{(i)}\varphi(B^{(i-1)})\varphi(E)^{-k_{i-1}}.\]
\end{enumerate}

\begin{rem}
	To be completely accurate, recall that our Kisin modules are defined over rings previously defined in Section \ref{notation-sec} by  $\Sig_F=(\oh_F\otimes_{\Zp}\oh_K)\pu$, $\Lambda_F\subset(F\otimes_{\Qp}K)\pu$ and $S_F=\Sig[\![\frac{E^p}{p}]\!]\otimes_{\Z_p} F$. When passing to the embedding decomposition, we may use the ring isomorphisms of Lemmas \ref{tensor-decomp-lem} and \ref{int-tensor-decomp-lem} to get Kisin modules which decompose into pieces with coefficients in $\oh_F\pu$ and inside of $F\pu$. We will abuse notation and continue to write $\Sig_F$ and $\Lambda_F$ with the understanding that we are no longer defined over the aforementioned tensor when working with decomposed pieces. We also make the identification $S_F=\Sig_F[\![\frac{E^p}{p}]\!]$ at this level as well. We hope this causes no confusion.
\end{rem}

\begin{prop}\label{construct-kis-mod-prop}
	The finite height Kisin module $\Ms(A)$ admits an $S_F$-basis $\{\eta_1^{(i)},\eta_2^{(i)}\}$ such that $\varphi_{\Ms(A)}^{(i)}(\eta_1^{(i-1)},\eta_2^{(i-1)})=(\eta_1^{(i)},\eta_2^{(i)})\As^{(i)}$ where
	\[\As^{(i)}=\begin{pmatrix}E^{k_i} & 0 \\ 0 & 1\end{pmatrix}A^{(i)}\Delta_{\kvec^{(i-1)}}\begin{pmatrix}\varphi(E)^{-k_{i-1}} & 0 \\ 0 & 1\end{pmatrix}.\]
	\begin{proof}
		To begin, we recall that by definition, the weakly admissible filtered $\varphi$-module $D(A)$ contains a strongly divisible lattice $L(A)$ with a basis adapted to the induced filtration, see Proposition\ref{weak-admis-equiv-SD-prop}. In particular, $D(A)$ admits an $F$-basis $\{e_1^{(i)},e_2^{(i)}\}$ such that the $i$-th partial Frobenius is $\varphi_{D(A)}^{(i)}(e_1^{(i-1)},e_2^{(i-1)})=(e_1^{(i)},e_2^{(i)})A^{(i)}\Delta_{\kvec^{(i-1)}}$ with $\Delta_{\kvec^{(i)}}=\Diag(p^{k_i},1)$.
		
		For each $e_j^{(i)}\in D(A)$, set $\hat{e}_j^{(i)}\coloneq1_S\otimes e_j^{(i)}\in\Ds(A)$. Then $\Ds(A)$ is a free $S_F$-module with basis $\{\hat{e}_1^{(i)},\hat{e}_2^{(i)}\}$ such that
		\[\varphi_{\Ds(A)}^{(i)}(\hat{e}_1^{(i-1)},\hat{e}_2^{(i-1)})=(\hat{e}_1^{(i)},\hat{e}_2^{(i)})A^{(i)}\Delta_{\kvec^{(i-1)}}.\]
		Since $D(A)$ has trivial monodromy then it is easy to compute $\Fil^{k_i}(\Ds(A)^{(i)})=S_F\hat{e}_1^{(i)}\oplus S_FE^{k_i}\hat{e}_2^{(i)}$. Hence, we may choose the basis $\{\hat{e}_1^{(i)},E^{k_i}\hat{e}_2^{(i)}\}$ of $\Fil^{k_i}(\Ds(A)^{(i)})$ so that the matrix $B$ of step $(2)$ above is given by $(B)=(B^{(i)})$ where $B^{(i)}=\Diag(1,E^{k_i})$. A brief matrix computation then gives us that the matrix $(\As)=(\As^{(i)})$ of step $(3)$ is as described. 
	\end{proof}
\end{prop}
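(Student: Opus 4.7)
The plan is to apply the algorithm recalled in Remark \ref{kis-alg-rem} (and spelled out in the three-step recipe immediately preceding the proposition) to the specific Breuil module $\Ds(A) = \D_\Bre(D(A))$. First, I would transport the basis $\{e_1^{(i)}, e_2^{(i)}\}$ of $D(A)^{(i)}$ adapted to the filtration (guaranteed by Proposition \ref{weak-admis-equiv-SD-prop} via Theorem \ref{para-equiv-class-thm}) to a natural $S_F$-basis $\{\hat{e}_1^{(i)}, \hat{e}_2^{(i)}\}$ of $\Ds(A)^{(i)}$ by setting $\hat{e}_j^{(i)} \coloneqq 1 \otimes e_j^{(i)}$. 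By construction of $\D_\Bre$, the partial Frobenius $\varphi_{\Ds(A)}^{(i)}$ acts in this basis by the same matrix $X^{(i)} = A^{(i)} \Delta_{\kvec^{(i-1)}}$ that governs $\varphi_{D(A)}^{(i)}$, so step (2) of the algorithm is immediate.

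The technical heart of the argument is step (2)'s identification of a basis of $\Fil^{k_i} \Ds(A)^{(i)}$. Using the formula $\Fil^j \Ds = \{x \in \Ds : N_\Ds(x) \in \Fil^{j-1}\Ds,\ (\eval_\pi \otimes 1)(x) \in \Fil^j D\}$ that defines the filtration on a Breuil module, I would exploit the fact that $D(A)$ is crystalline so that $N_{D(A)} = 0$, whence $N_\Ds = N \otimes 1$ is just the derivation $N = -u\frac{d}{du}$ extended $F$-linearly on the coefficients. The element $\hat{e}_1^{(i)}$ clearly satisfies both defining conditions (its evaluation at $\pi$ is $e_1^{(i)} \in \Fil^{k_i} D(A)^{(i)}$ and $N_\Ds(\hat{e}_1^{(i)}) = 0$), while $E^{k_i}\hat{e}_2^{(i)}$ evaluates to $E(\pi)^{k_i} e_2^{(i)} = 0$ at $\pi = -p$ and has monodromy $N(E^{k_i})\hat{e}_2^{(i)} = -u k_i E^{k_i-1} \hat{e}_2^{(i)}$, lying in $\Fil^{k_i-1}\Ds(A)^{(i)}$ by induction on $k_i$. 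Conversely, any element of $\Fil^{k_i}\Ds(A)^{(i)}$ decomposed along $\{\hat{e}_1^{(i)}, \hat{e}_2^{(i)}\}$ must have a coefficient on $\hat{e}_2^{(i)}$ whose evaluation at $\pi$ vanishes, hence is divisible by $E$; an induction using $N_\Ds$ and the filtration condition on $\Fil^{k_i-1}\Ds(A)^{(i)}$ would then sharpen this divisibility to $E^{k_i}$, giving the equality $\Fil^{k_i}\Ds(A)^{(i)} = S_F \hat{e}_1^{(i)} \oplus S_F E^{k_i} \hat{e}_2^{(i)}$. Thus the base change matrix $B^{(i)}$ of step (2) is $\Diag(1, E^{k_i})$.

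The final step is then a short matrix computation. Substituting $B^{(i)} = \Diag(1, E^{k_i})$ and $X^{(i)} = A^{(i)}\Delta_{\kvec^{(i-1)}}$ into the formula
\[\As^{(i)} = E^{k_i}\left(B^{(i)}\right)^{-1} X^{(i)} \varphi\bigl(B^{(i-1)}\bigr)\varphi(E)^{-k_{i-1}}\]
from step (3) and simplifying the outer diagonal factors as $E^{k_i}(B^{(i)})^{-1} = \Diag(E^{k_i}, 1)$ and $\varphi(B^{(i-1)})\varphi(E)^{-k_{i-1}} = \Diag(\varphi(E)^{-k_{i-1}}, 1)$ yields the claimed formula.

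I expect the main obstacle to be the inductive verification that $\{\hat{e}_1^{(i)}, E^{k_i}\hat{e}_2^{(i)}\}$ generates the full $\Fil^{k_i}\Ds(A)^{(i)}$ rather than merely a submodule. The conditions defining $\Fil^j \Ds$ are recursive in $j$ (through the monodromy condition), so one must build the filtration up carefully from $\Fil^0$, balancing the divisibility forced by the evaluation condition against the monodromy bookkeeping. Once this structural fact is in hand, the remainder of the proof is a bookkeeping exercise already encapsulated in the algorithm of Remark \ref{kis-alg-rem}.
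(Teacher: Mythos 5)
Your proposal is correct and follows essentially the same route as the paper: transport the adapted basis of $D(A)$ to $\Ds(A)$ via $\hat{e}_j^{(i)}=1\otimes e_j^{(i)}$, identify $\Fil^{k_i}\Ds(A)^{(i)}=S_F\hat{e}_1^{(i)}\oplus S_FE^{k_i}\hat{e}_2^{(i)}$ using vanishing of monodromy, and substitute $B^{(i)}=\Diag(1,E^{k_i})$ into the formula of Remark \ref{kis-alg-rem}. The only difference is that you spell out the verification of the filtration computation (both containments, via the evaluation-at-$\pi$ and monodromy conditions), which the paper simply asserts is ``easy to compute.''
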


We can now explicitly describe the Frobenius matrices $\As^{(i)}$ on a Kisin module $\Ms(A)=\prod_i \Ms(\As)^{(i)}$ in terms of the matrix Types of $(A^{(i)})$. Recall $\gamma=\varphi(E)/p\in S_F^*$ so that $\varphi(E)=p\gamma$.
\begin{itemize}
	\item Suppose $A^{(i)}$ is of Type $\I$. We may then write $A^{(i)}=\begin{psmallmatrix}0 & a_1^{(i)} \\ 1 & a_2^{(i)}\end{psmallmatrix}$ so that 
	\[\As^{(i)}=\begin{pmatrix}0 & E^{k_i}a_1^{(i)} \\ \gamma^{-k_{i-1}} & a_2^{(i)}\end{pmatrix};\]
	\item Suppose $A^{(i)}$ is of Type $\II$. We may then write $A^{(i)}=\begin{psmallmatrix}a_1^{(i)} & 0 \\ a_2^{(i)} & 1\end{psmallmatrix}$ so that 
	\[\As^{(i)}=\begin{pmatrix}\gamma^{-k_{i-1}}E^{k_i}a_1^{(i)} & 0 \\ \gamma^{-k_{i-1}}a_2^{(i)} & 1\end{pmatrix}.\]
\end{itemize}

\noindent Since the partial Frobenius matrices $(\As^{(i)})$ on $\Ms(A)$ are completely determined by the $f$-tuple $(A^{(i)})$, we will commonly write $\Ms(A)=\Ms(\As)$ with the understanding that $(\As)$ comes from $(A)$ which in turn comes from some strongly divisible lattice $\Theta(A)$. We will call each $k_i$ a \textit{labeled height} corresponding to $\Ms(\As)^{(i)}$.

\subsection{Change of Basis}\label{change-of-basis-sec}
From the previous section, we explicitly defined finite height, rank-two Kisin modules over $S_F$ arising from irreducible, weakly admissible filtered $\varphi$-modules. The purpose of this section is to change base so that the determinants of partial Frobenius are in a particular form. This special basis will be a critical piece of the puzzle when we later compute reductions. 

For a base change $f$-tuple $(X^{(i)})$, we will write $(X^{(i)})*_\varphi(\As^{(i)})=(\Bs^{(i)})$ to denote
\[\Bs^{(i)}=X^{(i)}\cdot\As^{(i)}\cdot\varphi(X^{(i-1)})^{-1}\]
for all $i\in\Z/f\Z$. As we are now dealing with $\varphi$-twisted base changes, we would like to first introduce some simplifying notation. For $f(\varphi)=\sum c_j\varphi^j\in\Z[\varphi]$ we will write $g^{f(\varphi)}=\prod_j \varphi^j(g)^{c_i}$. Recall $\gamma=\varphi(E)/p\in S_F^*$.

\begin{lem}\label{det-base-change-system-lem}
	For $f(\varphi)\in\Z[\varphi]$, consider the equation $x^{\varphi^b-1}=\gamma^{-f(\varphi)}$ over $S_F$ for some  integer $b>0$. Then there exists a solution $x=\lambda_b^{f(\varphi)}$ where
	\[\lambda_b=\prod_{n\ge0}\varphi^{bn}(\gamma)\in S_F^*.\]
	\begin{proof}
		It suffices to show that $\varphi^b(\lambda_b^{f(\varphi)})=\gamma^{-f(\varphi)}\lambda_b^{f(\varphi)}$. The right hand side of the equation gives
		\[\gamma^{-f(\varphi)}\lambda_b^{f(\varphi)}=\Bigg(\frac{p^{f(\varphi)}}{\varphi(E)^{f(\varphi)}}\Bigg)\Bigg(\prod_{n\ge0}\varphi^{bn}\left(\frac{\varphi(E)^{f(\varphi)}}{p^{f(\varphi)}}\right)\Bigg)=\prod_{n\ge1}\varphi^{bn}\left(\frac{\varphi(E)^{f(\varphi)}}{p^{f(\varphi)}}\right)=\varphi^b(\lambda_b^{f(\varphi)})\]
		The fact that $\lambda_b\in S_F^*$ follows form the fact that $\gamma\in S_F^*$.
	\end{proof}
\end{lem}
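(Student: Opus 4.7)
The plan is to unpack the notation $x^{\varphi^b - 1} = \varphi^b(x) \cdot x^{-1}$, so that the equation reads $\varphi^b(x) = x \cdot \gamma^{-f(\varphi)}$. My first step will be to reduce to the base case $f(\varphi) = 1$. Suppose an element $\lambda_b \in S_F^*$ can be found satisfying $\varphi^b(\lambda_b) = \lambda_b \cdot \gamma^{-1}$. Then for any $f(\varphi) = \sum_j c_j \varphi^j \in \Z[\varphi]$, the element $\lambda_b^{f(\varphi)} = \prod_j \varphi^j(\lambda_b)^{c_j}$ will satisfy the general relation, using that $\varphi^b$ commutes with each $\varphi^j$: applying $\varphi^b$ inside the product shifts each inner Frobenius argument, producing exactly one extra factor of $\varphi^j(\gamma)^{-c_j}$ per slot, so $\varphi^b(\lambda_b^{f(\varphi)}) = \lambda_b^{f(\varphi)} \gamma^{-f(\varphi)}$.

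For the base case, the natural ansatz is the telescoping product $\lambda_b = \prod_{n \ge 0} \varphi^{bn}(\gamma)$. Applying $\varphi^b$ shifts the index by one, giving
\[ \varphi^b(\lambda_b) = \prod_{n \ge 1} \varphi^{bn}(\gamma) = \lambda_b \cdot \gamma^{-1}, \]
which is precisely what is required.

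The one nontrivial point, and where I expect the bulk of the work, is to verify that the infinite product converges in $S_F$ and yields a unit. Note $\gamma = \varphi(E)/p = 1 + u^p/p$, and the identity $E = u + p$ lets us express $u^p/p$ as an $\oh_F$-linear combination of $E^p/p$ and an integral polynomial in $u$, so $u^p/p$ lies in the maximal ideal of $S_F = \Sig_F[\![E^p/p]\!]$; this already yields $\gamma \in S_F^*$. Each iterate $\varphi^{bn}(\gamma) = 1 + u^{p^{bn+1}}/p$ then lies in $1 + \m_{S_F}$, and the sequence tends to $1$ in the natural topology, forcing the product to converge to an element of $S_F^*$.

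The delicate part is controlling that $u^{p^{bn+1}}/p$ really does stay in $S_F$ as $n$ grows — the ring blends high powers of $u$ with divisions by $p$ in a nontrivial way — but once this is established, the reduction to $f(\varphi) = 1$ and the telescoping verification are entirely formal manipulations, so the whole argument reduces to a careful convergence check for one distinguished element.
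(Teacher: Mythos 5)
Your proposal is correct and follows essentially the same route as the paper: the paper verifies $\varphi^b(\lambda_b^{f(\varphi)})=\gamma^{-f(\varphi)}\lambda_b^{f(\varphi)}$ directly by the same telescoping of the product $\prod_{n\ge0}\varphi^{bn}(\gamma)$, just carrying the exponent $f(\varphi)$ throughout rather than reducing to the case $f(\varphi)=1$ first. Your extra care with convergence and the unit property (via $\gamma=1+u^p/p$ and $u^p/p\in S_F$) only fills in a point the paper dispatches with the remark that $\gamma\in S_F^*$.
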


\noindent Recall that we have decided to write $(xy)^{(i)}=x^{(i)}y^{(i)}$.

\begin{prop}\label{det-base-change-existance-prop}
	There exists an $f$-tuple $(X^{(i)})\in\GL_2(S_F)^f$ such that $(X^{(i)})*_\varphi(\As^{(i)})=(\Bs^{(i)})$ with $\det(\Bs^{(i)})\in E^{k_i}\oh_F^*$ for all $i\in\Z/f\Z$. 
	\begin{proof}
		Consider the matrices $X^{(i)}=\Diag(x_1^{(i)},x_2^{(i)})$ for all $i\in\Z/f\Z$. Computing $(X^{(i)})*_\varphi(\As^{(i)})$ in each embedding gives two cases depending on the Type of $\As^{(i)}$:
		\begin{center}
			Type $\I$ : $\begin{pmatrix}
				0 & \frac{E^{k_i}(a_1x_1)^{(i)}}{\varphi(x_2^{(i-1)})} \\ \frac{x_2^{(i)}}{\gamma^{k_{i-1}}\varphi(x_1^{(i-1)})} & \frac{(a_2x_2)^{(i)}}{\varphi(x_2^{(i-1)})}
			\end{pmatrix}$ \hspace{1cm} Type $\II$ : $\begin{pmatrix}
				\frac{E^{k_i}(a_1x_1)^{(i)}}{\gamma^{k_{i-1}}\varphi(x_1^{(i-1)})} & 0 \\ \frac{(a_2x_2)^{(i)}}{\gamma^{k_{i-1}}\varphi(x_1^{(i-1)})} & \frac{x_2^{(i)}}{\varphi(x_2^{(i-1)})}
			\end{pmatrix}$.
		\end{center}
		In the case of Type $\I$, we set
		\begin{align*}
			x_1^{(i)}&=\varphi(x_2^{(i-1)})&x_2^{(i)}&=\gamma^{k_{i-1}}\varphi(x_1^{(i-1)}),
		\end{align*}
		and in the case of Type $\II$, we set
		\begin{align*}
			x_1^{(i)}&=\gamma^{k_{i-1}}\varphi(x_1^{(i-1)})&x_2^{(i)}&=\varphi(x_2^{(i-1)}).
		\end{align*}
		Hence, depending on the Type of each $\As^{(i)}$, we get a recursive system of $2f$ equations. When there is an even number of Type $\I$ matrices, the system admits two disjoint $f$-cycles of which $x_1^{(i)}$ is apart of one and $x_2^{(i)}$ is apart of the other. On the other hand, when the number of Type $\I$ matrices is odd, then the system consists of a single $2f$-cycle.\footnote{This is the first sign that there is a dichotomy of behavior depending on the even or oddness of the number of Type $\I$ matrices in the Type combination. Indeed, we will see that the shape of the reductions depend on this as well.}
		
		Solving for $x_1^{(i)}$ and $x_2^{(i)}$ will give 
		\begin{align*}
			(x_1^{(i)})^{\varphi^b-1}&=\gamma^{-g^{(i)}(\varphi)}&(x_2^{(i)})^{\varphi^b-1}&=\gamma^{-h^{(i)}(\varphi)}
		\end{align*}
		for some $g^{(i)}(\varphi),h^{(i)}(\varphi)\in\Z[\varphi]$. When there is an even number of Type $\I$ matrices, the two disjoint $f$-cycles forces $b=f$  and when there is an odd number of Type $\I$, the single $2f$-cycle means $b=2f$. 
		
		By Lemma \ref{det-base-change-system-lem}, there exists solutions $x_1^{(i)}=\lambda_b^{g^{(i)}(\varphi)}$ and $x_2^{(i)}=\lambda_b^{h^{(i)}(\varphi)}$ in $S_F^*$. The result is a matrix $\Bs^{(i)}$ given by
		\begin{itemize}
			\item Type $\I$: \[\Bs^{(i)}=\begin{pmatrix}
				0 & E^{k_i}a_1^{(i)} \\ 1 & a_2^{(i)}\lambda_b^{h^{(i)}(\varphi)-g^{(i)}(\varphi)}
			\end{pmatrix};\]
			\item Type $\II$: \[\Bs^{(i)}=\begin{pmatrix}
				E^{k_i}a_1^{(i)} & 0 \\ a_2^{(i)}\lambda_b^{h^{(i)}(\varphi)-g^{(i)}(\varphi)} & 1
			\end{pmatrix}.\]
		\end{itemize}
		Hence, we see $\det(\Bs^{(i)})=E^{k_i}a_1^{(i)}\in E^{k_i}\oh_F^*$ for each $i\in\Z/f\Z$ as desired.
	\end{proof}
\end{prop}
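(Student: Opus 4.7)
My plan is to look for base change matrices in the simplest possible form, namely diagonal $X^{(i)} = \Diag(x_1^{(i)}, x_2^{(i)})$ with $x_j^{(i)} \in S_F^*$ to be determined. This is motivated by the shape of $\As^{(i)}$ from Proposition \ref{construct-kis-mod-prop}: the obstruction to having $\det(\Bs^{(i)}) \in E^{k_i}\oh_F^*$ is precisely the factor $\gamma^{-k_{i-1}}$ coming from $\Delta_{\kvec^{(i-1)}}$, and a diagonal base change gives just enough freedom to absorb this factor while normalizing the ``non $E^{k_i}$'' pivot entry to $1$.

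The first computation I would carry out is writing $X^{(i)}*_\varphi \As^{(i)}$ entry by entry in each of the two Types. In Type $\I$ the matrix $\As^{(i)}$ is off-antidiagonal, so the defining conditions (namely: the nontrivial off-diagonal entry equals $1$, the $(1,2)$ entry keeps its $E^{k_i} a_1^{(i)}$ factor) mix the two variables, forcing $x_1^{(i)} = \varphi(x_2^{(i-1)})$ and $x_2^{(i)} = \gamma^{k_{i-1}}\varphi(x_1^{(i-1)})$. In Type $\II$ the matrix is lower triangular, so the analogous conditions separate the variables, giving $x_1^{(i)} = \gamma^{k_{i-1}}\varphi(x_1^{(i-1)})$ and $x_2^{(i)} = \varphi(x_2^{(i-1)})$. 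In both cases only one of the two relations per embedding carries a $\gamma$-factor, and that factor appears exactly when the $\gamma^{-k_{i-1}}$ needs to be cancelled.

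The key combinatorial step is then to unwind this system of $2f$ recursions around $\Z/f\Z$. Each Type $\I$ embedding permutes the indices $\{1,2\}$ while each Type $\II$ embedding preserves them, so after traversing all $f$ embeddings once, the labels $\{1,2\}$ are swapped iff $|\Ss|$ is odd (where $\Ss$ indexes the Type $\I$ positions). Consequently, if $|\Ss|$ is even the system decouples into two disjoint $f$-cycles and we take $b = f$; if $|\Ss|$ is odd the two cycles glue into a single $2f$-cycle and we take $b = 2f$. In either case each $x_j^{(i)}$ satisfies an equation of the form $x^{\varphi^b-1} = \gamma^{-f(\varphi)}$ for an explicit polynomial $f(\varphi) \in \Z[\varphi]$ recording the accumulated $\gamma^{k_{i-1}}$ factors around the cycle, and Lemma \ref{det-base-change-system-lem} then produces a solution $x_j^{(i)} = \lambda_b^{f(\varphi)} \in S_F^*$.

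Substituting these solutions back into the transformed matrices shows that $\Bs^{(i)}$ has $a_1^{(i)}E^{k_i}$ in one diagonal (or antidiagonal) slot and $1$ in the complementary slot, with the remaining entry being some harmless $a_2^{(i)}\lambda_b^{(\cdots)}$, hence $\det(\Bs^{(i)}) = E^{k_i} a_1^{(i)} \in E^{k_i}\oh_F^*$ as required. The main obstacle is bookkeeping: getting the cycle analysis right so that the accumulated $\gamma$-exponents fit the hypothesis of Lemma \ref{det-base-change-system-lem} with the correct $b$. Once this parity dichotomy is identified, everything else is a direct verification.
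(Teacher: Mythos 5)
Your proposal is correct and follows essentially the same route as the paper: a diagonal ansatz $X^{(i)}=\Diag(x_1^{(i)},x_2^{(i)})$, the identical recursions (with the $\gamma^{k_{i-1}}$-factor placed exactly where the paper places it in each Type), the same parity analysis of $|\Ss|$ yielding $b=f$ or $b=2f$, and the same invocation of Lemma \ref{det-base-change-system-lem} to solve $x^{\varphi^b-1}=\gamma^{-f(\varphi)}$ in $S_F^*$. No gaps; the determinant computation $\det(\Bs^{(i)})=E^{k_i}a_1^{(i)}$ closes the argument just as in the paper.
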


\section{A Descent Algorithm for Kisin Modules}\label{red-alg-chap}

\subsection{Overview of our Methods}\label{methods-sec}
Section \ref{Explicit-Kisin-mod-chap} has given us an explicit description of finite height, rank-two Kisin modules over $S_F$ arising from irreducible, weakly admissible filtered $\varphi$-modules. In order to  use these Kisin modules to compute reductions of the associated crystalline representations, we will need to \textit{descend}, in the sense of Definition \ref{descent-def}, the coefficients of our Kisin modules from $S_F$ to $\Sig_F$ where reduction modulo $\varpi$ is possible as detailed in Subsection \ref{BK-chap}. The methods displayed in this article to do so are heavily inspired by Liu's reduction algorithm in \cite{Liu21} which are in turn, inspired by the algorithms of Bergdall and Levin in \cite{BL20} and Bergdall, Levin, and Liu in \cite{BLL22}. The common trope between all of these is a two step process that begins with `preparation' computations followed by a theoretic `descent' stage. In simple terms, one is typically forced to begin the process with a Kisin module over enlarged coefficients that are not able to be directly reduced, such as $S_F=\Sig_F[\![\frac{E^p}{p}]\!]$. A preparation stage is used to find a basis where we can split our matrix into an integral $\Sig_F$ piece and a more complicated piece that is sufficiently small in some adic sense inside $S_F$. The descent stage then justifies why we can descend our coefficients so that the more complicated piece is disregarded and we are left only with the integral piece. Our methods follow this recipe closely.

Recall that by Section \ref{change-of-basis-sec}, we have achieved an explicit description of Kisin modules $\Ms(\As)$ of finite height over $S_F$ by describing their $i$'th partial Frobenius actions by the matrix $\As^{(i)}$ in terms of Types given by:
\begin{itemize}
	\item Type $\I$: \[\As^{(i)}=\begin{pmatrix}
		0 & E^{k_i}a_1^{(i)} \\ 1 & a_2^{(i)}\lambda_b^{h^{(i)}(\varphi)-g^{(i)}(\varphi)}
	\end{pmatrix};\]
	\item Type $\II$: \[\As^{(i)}=\begin{pmatrix}
		E^{k_i}a_1^{(i)} & 0 \\ a_2^{(i)}\lambda_b^{h^{(i)}(\varphi)-g^{(i)}(\varphi)} & 1
	\end{pmatrix}.\]
\end{itemize}

A quick glance at our coefficients make it obvious that reduction modulo $\varpi$ is not possible for such Kisin modules. In Section \ref{prep-comp-chap}, we will (under some restrictions) explicitly construct a change of basis $X$ over $\GL_2(S_F[1/p])$ so that $X*_\varphi\As=\Af_0+C$ with $\Af_0\in\GL_2(\Sig_F)$ and $C\in\Mat_2(I)$ where $I\subset S_F$ is a particularly chosen ideal which is sufficiently large in both a $\varpi$ and $E$-adic sense. This  will constitute our `preparation' stage and involves fairly delicate computations to ensure that elements are either integral or are adically small enough to lie in $I$.

In the following section, we will prove some results making up the `descent' stage of our algorithm. The idea will be that if we assume that the basis achieved from the preparation stage above exists, then we can prove the existence of another change of basis $Y$ over $R[1/p]=\Sig_F[\![\varpi E/p]\!][1/p]$ so that $Y*_\varphi\As=\Af$ where $\Af\in\GL_2(\Sig_F)$ and moreover, we will have $\Af\equiv \Af_0\pmod\varpi$. Hence, we will have descended the coefficients of $\As$ to $\Sig_F$ and have a complete understanding of the reduction modulo $\varpi$. We note that this stage of the algorithm is not explicit and indeed, we have little knowledge of what $\Af$ actually looks like; however, we know exactly what its reduction of Frobenius is and this is enough for our purposes.

\subsection{The Descent Algorithm}\label{descent-alg-sec}
As told in Section \ref{methods-sec}, the descent stage of our reduction algorithm will depend on the existence of a certain base change which allows us to split each partial Frobenius $\As^{(i)}$ into a reducible part and a sufficiently adically small part. Hence, before stating the descent algorithm, we will need to establish a set of assumptions we will call the \textit{descent assumptions} under which descent may be carried out.

Let us begin with two very important sets that will give us our notion of elements that are `sufficiently adically small'. Recall that $\Fil^j S_F=E^j S_F$ and let $\ident_d$ denote the usual $d\times d$ identity matrix.

\begin{defin}\label{I-J-def}
	For an integer $c>1$, define the sets $I_c\coloneqq \varpi p^{c-1} S_F+\varpi\Fil^{cp}S_F+E\Fil^{cp}S_F\subset S_F$ and $J_c=\varpi\Fil^{cp}S_F+E\Fil^{cp}S_F\subset S_F$.
\end{defin}

\begin{rem}\label{EFil-rem}
	The presence of $E\Fil^{cp}S_F$ in both $I_c$ and $J_c$ is not required for the purposes of this article. However, they are critical to \cite{Guz2} which utilizes this construction so we include it here for completeness.
\end{rem}

\noindent  Hence, $I_c=\varpi p^{c-1}S_F+J_c$ and it is not difficult to see that both $I_c,J_c\subset S_F$ are ideals. We are now ready to state the descent assumptions that will be verified to hold under certain restrictions in Section \ref{prep-comp-chap}.

\begin{assump}[Descent Assumptions]\label{descent-assump}
	Let $\Ms(\As)=\prod_i\Ms(\As)^{(i)}$ be a rank-two Kisin module over $S_F$ with labeled heights $k_i$. Suppose there exists an integer $c>1$ and a sequence of base changes $(X_n)$ so that by setting $(X_n^{(i)})*_\varphi(\As^{(i)})=(\As_n^{(i)})$, the following holds:
	\begin{enumerate}
		\item We have $c(p-2)\ge k_i$ for all $i\in\Z/f\Z$;
		\item Either $X_n^{(i)}\in\GL_2(S_F[1/p])$ with $\det(X_n^{(i)})=1$ or $X_n^{(i)}=\Diag(x,y)\in\GL_2(F)$;
		\item There exists a finite $m>0$ such that 
		\[\As_m^{(i)}=\Af_0^{(i)}+C^{(i)}\]
		where $\Af_0^{(i)}\in\Mat_2(\Sig_F)$ and $C^{(i)}\in\Mat_2(I_c)$.
	\end{enumerate}
\end{assump}

Let us state the descent theorem and temporarily forgo the proof until we have built up the needed results. Recall that $S_F=\Sig_F[\![E^p/p]\!]$ and define an auxiliary ring $R=\Sig_F[\![\varpi E/p]\!]$. For any ring $\Ss$ such that $\Sig_F\subset \Ss\subset F[\![u]\!]$ and $h\ge0$, define the subset of \textit{height-$h$} matrices:
\[\Mat_d^{h}(\Ss)\coloneqq\big\{A\in\Mat_d(\Ss):\text{\normalfont there exists}\hspace{.2cm} B\in\Mat_d(\Ss) \hspace{.2cm}\text{\normalfont with}\hspace{.2cm} AB=BA=E^h\ident_d\big\}.\]

We are now able state our main result of this section, the theoretical foundation which permits descent. The following theorem may be viewed as a version of \cite[Prop. 3.3]{Liu21} in the dimension two and $f>1$ case.

\begin{thm}\label{descent-thm}
	Suppose the Descent Assumptions \ref{descent-assump}. Then there exists a base change $(Y^{(i)})\in\GL_2(R[1/p])^f$ such that $Y^{(i)}*_\varphi\As^{(i)}=\Af^{(i)}\in\Mat_2^{k_i}(\Sig_F)$ with $\Af^{(i)}\equiv\Af_0^{(i)}\pmod\varpi$.
\end{thm}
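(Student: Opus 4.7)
The strategy is to construct $Y^{(i)}$ as a composition $Y^{(i)} = Z^{(i)} \cdot X_m^{(i)} \cdots X_1^{(i)}$, where the prefix comes from the descent assumptions and $Z = (Z^{(i)}) \in \GL_2(R[1/p])^f$ is a new base change transforming $\As_m^{(i)} = \Af_0^{(i)} + C^{(i)}$ into a matrix $\Af^{(i)} \in \Mat_2^{k_i}(\Sig_F)$ with $\Af^{(i)} \equiv \Af_0^{(i)} \pmod \varpi$. I will build $Z$ as a convergent infinite product $Z^{(i)} = \cdots Z_2^{(i)} Z_1^{(i)}$, where each $Z_n = \ident + P_n$ is a small perturbation of the identity with $P_n$ living in a successively deeper matrix ideal of $R[1/p]$. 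This mirrors the strategy in \cite{Liu21,BLL22}, but must be adapted to the cyclic $f$-tuple setting and to the ring $R[1/p] = \Sig_F[\![\varpi E/p]\!][1/p]$.

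\textbf{The iterative step.} Starting from $\As_{m+n}^{(i)} = \Af_n^{(i)} + C_n^{(i)}$ with $\Af_n^{(i)} \in \Mat_2^{k_i}(\Sig_F)$ and $C_n^{(i)}$ in a matrix-ideal $\Mat_2(I_{c,n})$ of $S_F$ that shrinks with $n$, I look for $P_n^{(i)}$ satisfying the twisted Sylvester-type system
\[
P_n^{(i)} \Af_n^{(i)} \;-\; \Af_n^{(i)} \varphi\bigl(P_n^{(i-1)}\bigr) \;\equiv\; C_n^{(i)} \pmod{\text{deeper ideal}}, \qquad i \in \Z/f\Z.
\]
Because each $\Af_n^{(i)}$ inherits the finite $E$-height $k_i$ from $\As^{(i)}$ (preserved by all determinant-preserving base changes in the descent assumptions), its adjugate over $\Sig_F$ satisfies $\Af_n^{(i)} \cdot \text{adj} = E^{k_i} \ident$. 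Thus the system is solvable with $P_n^{(i)}$ acquiring at most a factor $E^{-k_i}$ in its denominator. Setting $Z_n^{(i)} = \ident + P_n^{(i)}$ and $\As_{m+n+1}^{(i)} = Z_n^{(i)} *_\varphi \As_{m+n}^{(i)}$, the first-order cancellation leaves $C_{n+1}^{(i)}$ quadratic in the original perturbation, hence strictly deeper than $C_n^{(i)}$. The updated integral part $\Af_{n+1}^{(i)}$ remains in $\Mat_2^{k_i}(\Sig_F)$ and congruent to $\Af_n^{(i)}$ modulo an increasing power of $\varpi$.

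\textbf{Bookkeeping via $c(p-2) \geq k_i$ and convergence.} The ideal $I_c = \varpi p^{c-1} S_F + \varpi \Fil^{cp} S_F + E \Fil^{cp} S_F$ is engineered so that dividing entries of $C_n^{(i)}$ by $E^{k_i}$ still leaves them in a controlled ideal: the $E$-depth drops from $cp$ to $cp - k_i \geq cp - c(p-2) = 2c > 0$, while the $\varpi p^{c-1}$ contribution, after division by $E^{k_i}$ and multiplication by suitable $(\varpi E / p)$-factors, lands inside $R[1/p]$ rather than only in $S_F[1/p]$. This is precisely why $R$ is defined using the generator $\varpi E / p$: it absorbs the $p^{-1}$ that enters when solving for $P_n$. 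Iterating, $P_n$ lies in matrix-ideals of $R[1/p]$ whose depth tends to infinity in the natural $(\varpi, E)$-adic topology of $R[1/p]$, so the infinite product $Z^{(i)} = \cdots Z_2^{(i)} Z_1^{(i)}$ converges in $\GL_2(R[1/p])$ and the sequence $\Af_n^{(i)}$ converges $\varpi$-adically to some $\Af^{(i)} \in \Mat_2(\Sig_F)$ with $\Af^{(i)} \equiv \Af_0^{(i)} \pmod \varpi$. The finite height $\Af^{(i)} \in \Mat_2^{k_i}(\Sig_F)$ is preserved through every stage.

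\textbf{Main obstacle.} The chief difficulty is the bookkeeping in the final paragraph: the perturbations $P_n$ must simultaneously (i) solve a Sylvester equation cyclically linking all $f$ embeddings, (ii) carry at most tolerable $E^{-k_i}$ and $p^{-1}$ denominators, and (iii) lie in $R[1/p]$ rather than merely $S_F[1/p]$ so that the limiting base change respects the required ring. The hypothesis $c(p-2) \geq k_i$ is the numerical threshold making all three constraints compatible, and verifying this compatibility --- together with showing that the resulting iteration genuinely contracts the error in the correct topology --- is where the bulk of the technical work lies. Once this is in place, integrality of $\Af$ modulo $\varpi$ and preservation of finite height follow formally from the construction.
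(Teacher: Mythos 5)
Your overall architecture (compose the assumed base changes $X_m\cdots X_1$ with a new correction built as a convergent infinite product of perturbations of the identity, converging in $\GL_2(R[1/p])$, with $c(p-2)\ge k_i$ as the numerical threshold) matches the paper's strategy, but the engine of your iteration has a genuine gap. You propose to cancel the error $C_n^{(i)}$ to first order by solving the cyclic $\varphi$-twisted Sylvester system $P_n^{(i)}\Af_n^{(i)}-\Af_n^{(i)}\varphi(P_n^{(i-1)})\equiv C_n^{(i)}$, and you justify solvability solely by the adjugate identity $\Af_n^{(i)}B_n^{(i)}=E^{k_i}\ident_2$. That identity only controls the $E^{-k_i}$ denominators coming from inverting $\Af_n^{(i)}$; it says nothing about the $\varphi$-twisted coupling around the cycle $i\in\Z/f\Z$. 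Unwinding the cycle produces an equation expressing $P_n^{(0)}$ in terms of $\varphi^f(P_n^{(0)})$, and solving that requires exactly the kind of convergence argument (Frobenius making terms deeper) that the theorem is about — so as written the solvability claim is circular or at best unproven. The paper avoids this entirely: it never solves a Sylvester equation, but only the one-sided equation, writing $C_n^{(i)}=C_n'^{(i)}B_n^{(i)}A_n^{(i)}$ so that $A_n^{(i)}+C_n^{(i)}=(\ident_2+C_n'^{(i)}B_n^{(i)})A_n^{(i)}$ exactly, and takes the base change $(\ident_2+C_n'^{(i)}B_n^{(i)})^{-1}$; the error is then reintroduced only on the Frobenius side as $A_n^{(i)}(\ident_2+\varphi(C_n'^{(i-1)}B_n^{(i-1)}))$, and the whole point is that $\varphi(E)=p\gamma$ converts $E$-depth into $p$-divisibility, making that term strictly deeper.

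Relatedly, the quantitative heart of the argument is exactly what you defer. Your claim that the leftover error is ``quadratic in the perturbation, hence strictly deeper'' is the wrong mechanism: the decay here is not Newton-quadratic, it is linear-per-step and driven by applying $\varphi$, and with the $E^{-k_i}$ denominators present one must verify the precise bookkeeping — in the paper, an entry of $C_n'^{(i)}$ is $\sum_{j\ge h_n}a_jE^{j-k_i}p^{-\lfloor j/p\rfloor}$, so $\varphi$ of it lies in $\varpi p^{\ell_n}S_F$ with $\ell_n=h_n-k_i-\lfloor h_n/p\rfloor$, and the new depth $h_{n+1}=p(\ell_n+1)$ exceeds $h_n$ precisely because $p>2$ and $h_n(1-2p^{-1})\ge\max_i k_i$ (with $h_1=cp$ this is the hypothesis $c(p-2)\ge k_i$); the same computation shows the perturbations lie in $R$ (since $j-k_i\ge\lfloor j/p\rfloor$), which is what puts the limit base change in $\GL_2(R[1/p])$. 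You acknowledge this is ``where the bulk of the technical work lies'' but do not carry it out. Finally, two preparatory points are also asserted rather than proved: the Descent Assumptions only give $\Af_0^{(i)}\in\Mat_2(\Sig_F)$, and showing the integral part can be regrouped to lie in $\Mat_2^{k_i}(\Sig_F)$ requires the decomposition $I_c\subset\varpi\Sig_F+J_c$ and the observation $\det\equiv E^{k_i}a_1^{(i)}\pmod{\Fil^{cp}S_F}$ together with $\Fil^{cp}S_F\cap\Sig_F=E^{cp}\Sig_F$ and $k_i<cp$ (the paper's Lemma \ref{I-decomp-lem}); and the entries of $J_c$ involving $E\Fil^{cp}S_F$ are not in $\varpi\Fil^{cp}S_F$, so one extra explicit absorption step is needed before the iteration can start. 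None of these are fatal to your plan, but as it stands the proposal replaces the theorem's main analytic content with assertions.
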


Of critical importance to our reduction process will be the following algorithm. We note that it is stated in far more generality then what is permitted by the Descent Assumptions \ref{descent-assump}. Indeed, we make no restriction on dimension or height. This may be viewed as a generalization of \cite[Prop 3.2]{Liu21} to the $f>1$ case. Note that it is at this point where we begin to exclude $p=2$ from the proceedings.

\begin{prop}\label{red-alg-prop}
	Let $p>2$ and consider an $f$-tuple of matrices $(A^{(i)})$ with each $A^{(i)}\in\Mat_d^{h^{(i)}}(S_F)$. Suppose we may write $A^{(i)}=A_1^{(i)}+C_1^{(i)}$ with $A_1^{(i)}\in\Mat_d^{h^{(i)}}(\Sig_F)$ and $C_1^{(i)}\in\Mat_d(\varpi\Fil^{h_1^{(i)}}S_F)$ where $h_1^{(i)}\ge \frac{\max_i(h^{(i)})}{1-2p^{-1}}$ for all $i\in\Z/f\Z$. Then there exists a sequence of base change matrices $(Y_n^{(i)})\in\Mat_d(R)$ such that:
	\begin{itemize}
		\item As $n\rightarrow\infty$, $Y_n^{(i)}$ converges to $Y^{(i)}$ in $\GL_d(R)$;
		\item $Y^{(i)}*_\varphi A^{(i)}=\Af^{(i)}\in \Mat_d^{h^{(i)}}(\Sig_F)$;
		\item $\Af^{(i)}\equiv A_1^{(i)}\pmod\varpi$.
	\end{itemize}
	\begin{proof}
		Without loss of generality, let us fix an $i\in\Z/f\Z$ and construct the sequence of matrices $Y_n^{(i)}$ by successive approximation. By assumption we know that $C_1^{(i)}\in\Mat_d(\varpi\Fil^{h_1^{(i)}}S_F)$ so we may write $C_1^{(i)}=C_1'^{(i)}E^{h^{(i)}}$ since $h_1^{(i)}>h^{(i)}$. As $A_1^{(i)}\in\Mat_d^{h^{(i)}}(\Sig_F)$ then there exists $B_1^{(i)}\in\Mat_d^{h^{(i)}}(\Sig_F)$ such that $C_1^{(i)}=C_1'^{(i)}E^{h^{(i)}}=C_1'^{(i)}B_1^{(i)}A_1^{(i)}$. Setting $Y_0^{(i)}=\ident_d$, we may write
		\[(Y_{0}^{(i)})*_\varphi(A^{(i)})=A_1^{(i)}+C_1^{(i)}=A_1^{(i)}+C_1'^{(i)}B_1^{(i)}A_1^{(i)}=\left(\ident_d+C_1'^{(i)}B_1^{(i)}\right)A_1^{(i)}\]
		We will set $Y_1^{(i)}\coloneqq (\ident_d+C_1'^{(i)}B_1^{(i)})^{-1}Y_{0}^{(i)}$ and observe that
		\begin{align*}
			(Y_1^{(i)})*_\varphi(A^{(i)}) &= (\ident_d+C_1'^{(i)}B_1^{(i)})^{-1}Y_{0}^{(i)}\cdot A^{(i)}\cdot\varphi(Y_{0}^{(i-1)})^{-1}(\ident_d+\varphi(C_1'^{(i-1)}B_1^{(i-1)})) \\
			&= (\ident_d+C_1'^{(i)}B_1^{(i)})^{-1}\left(\ident_d+C_1'^{(i)}B_1^{(i)}\right)A_1^{(i)}(\ident_d+\varphi(C_1'^{(i-1)}B_1^{(i-1)})) \\
			&=A_1^{(i)}(I+\varphi(C_1'^{(i-1)}B_1^{(i-1)})).
		\end{align*}
		
		\noindent We now embark on the task of writing $A_1^{(i)}(I+\varphi(C_1'^{(i-1)}B_1^{(i-1)}))=A_2^{(i)}+C_2^{(i)}$ so we can repeat the above process.
		
		Observe that any element of $S_F$ may be written as $\sum_{j\ge0}a_j\frac{E^j}{p^{\lfloor j/p\rfloor}}$ where $a_j\in\oh_F[u]$. Since $C_1^{(i)}=C_1'^{(i)}E^{h^{(i)}}$ then any element of $C_1'^{(i)}$ will take the form $x=\sum_{j\ge h_1^{(i)}}a_j\frac{E^{j-h^{(i)}}}{p^{\lfloor j/p \rfloor}}$ where each $a_j\in \varpi\Sig_F$. We have $\varphi(E)=p\gamma$ with $\gamma\in S_F^*$ so that $\varphi(x)\in\varpi p^{\ell_1^{(i)}}S_F$ where $\ell_1^{(i)}=h_1^{(i)}-h^{(i)}-\lfloor\frac{h_1^{(i)}}{p}\rfloor$. Hence, we can write $\varphi(x)=\sum_{j\ge0} b_j p^{\ell_1^{(i)}}\frac{E^j}{p^{\lfloor j/p\rfloor}}=y+z$ where $y\in\varpi\Sig_F\cap p^{\ell_1^{(i)}}S_F$ and $z\in\varpi\Fil^{p(\ell_1^{(i)}+1)}S_F$. We set $h_{2}^{(i)}=p(\ell_1^{(i)}+1)$. In fact, we have $h_{2}^{(i)}>h_1^{(i)}$ which can be seen by noticing that proving the previous inequality is equivalent to proving the following inequality after expanding $\ell_1^{(i)}$,
		\[h_1^{(i)}(1-p^{-1})-\Big\lfloor\frac{h_1^{(i)}}{p}\Big\rfloor+1>h^{(i)}.\]
		Hence, we may write $h_1^{(i)}(1-p^{-1})-\lfloor\frac{h_1^{(i)}}{p}\rfloor+1>h_1^{(i)}(1-2p^{-1})\ge\max_i(h^{(i)})\ge h^{(i)}$ which gives the desired conclusion since $p>2$.
		
		Since $B_1^{(i)}\in\Mat_d^{h^{(i)}}(\Sig_F)$ then the above discussion implies that $\varphi(C_1'^{(i)}B_1^{(i)})=D_{2}^{(i)}+D_{2}'^{(i)}$ with $D_{2}^{(i)}\in\Mat_d(\varpi\Sig_F\cap p^{\ell_1^{(i)}}S_F)$ and $D_{2}'^{(i)}\in\Mat_d(\varpi\Fil^{h_{2}^{(i)}}S_F)$. Let us set
		\begin{align*}
			A_{2}^{(i)}&=A_1^{(i)}(\ident_d+D_{2}^{(i-1)}) & C_{2}^{(i)}=A_1^{(i)}D_{2}'^{(i-1)}.
		\end{align*}
		
		\noindent Since $D_{2}^{(i)}\in\Mat_d(\varpi\Sig_F\cap p^{\ell_1^{(i)}}S_F)$ the geometric series $\sum_{j\ge0}(-D_{2}^{(i)})^j=(\ident_d+D_{2}^{(i)})^{-1}$ converges in $\Mat_d(\Sig_F)$. Hence, if $B_1^{(i)}\in\Mat_d^{h^{(i)}}(\Sig_F)$ is the matrix such that $A_1^{(i)}B_1^{(i)}=E^{h^{(i)}}\ident_d$ then by setting $B_{2}^{(i)}=(I+D_{2}^{(i-1)})^{-1}B_1^{(i)}$ we get that $A_{2}^{(i)}\in\Mat_d^{h^{(i)}}(\Sig_F)$. We also notice that by construction it is clear that $C_{2}^{(i)}\in\Mat_d(\varpi\Fil^{h_{2}^{(i-1)}}S_F)$. Since $h_{2}^{(i-1)}>h_1^{(i-1)}>\max_i(h^{(i)})\ge h^{(i)}$ then we can repeat this process $(f-1)$-times to get a base change matrix $Y_{f}^{(i)}=\prod_{j=0}^{f}(\ident_d+C_j'^{(i)}B_j^{(i)})^{-1}$ so that 
		\[Y_{f}^{(i)}*_\varphi A^{(i)}=A_{f+1}^{(i)}+C_{f+1}^{(i)}\] 
		where $A_{f+1}^{(i)}\in\Mat_d^{h^{(i)}}(\Sig_F)$ and $C_{f+1}^{(i)}\in\Mat_d(\varpi\Fil^{h_{f+1}^{(i)}}S_F)$ where $h_{f+1}^{(i)}>h_{1}^{(i)}$. Hence, we have returned to the original assumptions with the only difference being our $C_{f+1}^{(i)}$ matrix is $E$-adically smaller than $C_1^{(i)}$. 
		
		Let us set $Y^{(i)}=\lim\limits_{n\rightarrow\infty}Y_n^{(i)}=\prod_{j=0}^{\infty}(\ident_d+C_j'^{(i)}B_j^{(i)})^{-1}$. Recall that we have written $Y_n^{(i)}=(\ident_d+C_n'^{(i)}B_n^{(i)})^{-1}Y_{n-1}^{(i)}$ and that we may denote any element of the matrix $C_n'^{(i)}B_n^{(i)}$ by $x=\sum_{j\ge h_n^{(i)}}a_j\frac{E^{j-h^{(i)}}}{p^{\lfloor j/p \rfloor}}$ with $a_j\in\varpi\Sig_F$. Since $h_n^{(i)}>\max{(h^{(i)})}\ge h^{(i)}$ then $j-h^{(i)}\ge\lfloor j/p \rfloor$ for any $j\ge h_n^{(i)}$. Hence $x\in R$ and $(\ident_d+C_n'^{(i)}B_n^{(i)})^{-1}\in \GL_d(R)$ so as $n\rightarrow\infty$, $Y^{(i)}=\prod_{j=0}^{\infty}(\ident_d+C_j'^{(i)}B_j^{(i)})^{-1}\in\GL_d(R)$ converges as desired proving (a) since $C_j'^{(i)}B_j^{(i)}\rightarrow 0$ in the $E$-adically closed ring $R$.
		
		Finally, observe that by construction we have that $A_{n+1}^{(i)}\equiv A_n^{(i)}\pmod\varpi$ and $A_{n+1}^{(i)}-A_n^{(i)}\in \Mat_d(p^{\ell_n^{(i+1)}}S_F)$ so as $n\rightarrow\infty$, we have that $A_n^{(i)}$ converges to some $\mathfrak{A}^{(i)}\in\Mat_d^{h^{(i)}}(\Sig_F)$ with $\mathfrak{A}^{(i)}\equiv A_1^{(i)}\pmod\varpi$. Moreover, as $n\rightarrow\infty$, the above analysis shows that $h_n^{(i)}\rightarrow\infty$ so that $C_n^{(i)}\rightarrow 0$ in the $E$-adically closed ring $S_F$. This proves both $(b)$ and $(c)$.
	\end{proof}
\end{prop}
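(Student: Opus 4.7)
The plan is a successive approximation in which left-multiplication by a carefully chosen sequence of base changes cancels the error term step-by-step, at the cost of introducing a new, $E$-adically smaller error coming from the Frobenius-twisted conjugation. I will iterate until the error vanishes in the limit, producing a well-defined element of $\GL_d(R)$.

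First, I factor the initial error using the height of $A_1^{(i)}$. Since $A_1^{(i)} \in \Mat_d^{h^{(i)}}(\Sig_F)$, there exists $B_1^{(i)} \in \Mat_d^{h^{(i)}}(\Sig_F)$ with $A_1^{(i)} B_1^{(i)} = E^{h^{(i)}} \ident_d$, and the assumption $h_1^{(i)} > h^{(i)}$ lets me write $C_1^{(i)} = C_1'^{(i)} E^{h^{(i)}} = C_1'^{(i)} B_1^{(i)} A_1^{(i)}$. Thus $A^{(i)} = (\ident_d + C_1'^{(i)} B_1^{(i)}) A_1^{(i)}$, and taking $Y_1^{(i)} = (\ident_d + C_1'^{(i)} B_1^{(i)})^{-1}$ gives
\[Y_1^{(i)} *_\varphi A^{(i)} = A_1^{(i)}\bigl(\ident_d + \varphi(C_1'^{(i-1)} B_1^{(i-1)})\bigr),\]
so the left-error is cleared and a new right-error appears, pulled back by $\varphi$ from the index $i-1$.

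The heart of the argument is the analysis of this new error: $\varphi$ trades $E$-adic smallness for $p$-adic smallness, since $\varphi(E) = p\gamma$ with $\gamma \in S_F^\times$. Writing elements of $S_F$ in normal form $\sum_j a_j E^j/p^{\lfloor j/p\rfloor}$ with $a_j \in \oh_F[u]$, one computes that $\varphi(C_1'^{(i)} B_1^{(i)}) \in \Mat_d(\varpi p^{\ell_1^{(i)}} S_F)$ with $\ell_1^{(i)} = h_1^{(i)} - h^{(i)} - \lfloor h_1^{(i)}/p\rfloor$. I then split at the cutoff $j = p(\ell_1^{(i)}+1)$: low-order terms land in $\Mat_d(\varpi \Sig_F)$ (the factor $p^{\ell_1^{(i)}}$ absorbs the denominator), while high-order terms lie in $\Mat_d(\varpi \Fil^{h_2^{(i)}} S_F)$ where $h_2^{(i)} = p(\ell_1^{(i)}+1)$. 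Absorbing the $\Sig_F$-part into the integral summand (invertibility of $\ident_d + D_2^{(i-1)}$ in $\GL_d(\Sig_F)$ follows from the $\varpi$-adic geometric series) yields a new decomposition $Y_1^{(i)}*_\varphi A^{(i)} = A_2^{(i)} + C_2^{(i)}$ of the same shape as the hypotheses, with $A_2^{(i)} \equiv A_1^{(i)} \pmod\varpi$ and $C_2^{(i)}$ of improved $E$-adic depth $h_2^{(i-1)}$. The main obstacle, and precisely the reason $p = 2$ must be excluded, is the strict inequality $h_2^{(i)} > h_1^{(i)}$ that makes the iteration genuinely improve things; after expansion this becomes $h_1^{(i)}(1-p^{-1}) - \lfloor h_1^{(i)}/p\rfloor + 1 > h^{(i)}$, which reduces via $\lfloor h_1^{(i)}/p\rfloor \leq h_1^{(i)}/p$ to $h_1^{(i)}(1-2p^{-1}) \geq h^{(i)}$, exactly the standing hypothesis.

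Iterating this step produces sequences $A_n^{(i)} \in \Mat_d^{h^{(i)}}(\Sig_F)$, $C_n^{(i)} \in \Mat_d(\varpi \Fil^{h_n^{(i-1)}} S_F)$, and $Y_n^{(i)} = \prod_{k=1}^n (\ident_d + C_k'^{(i)} B_k^{(i)})^{-1}$. To pass to the limit, each correction $C_k'^{(i)} B_k^{(i)}$ is a matrix whose entries expand as $\sum_{j \geq h_k^{(i)}} a_j E^{j-h^{(i)}}/p^{\lfloor j/p\rfloor}$ with $a_j \in \varpi \Sig_F$; the bound $j - h^{(i)} \geq \lfloor j/p\rfloor$ (valid since $h_k^{(i)} > h^{(i)}$) ensures these entries lie in $R = \Sig_F[\![\varpi E/p]\!]$, and their $E$-adic depth $h_k^{(i)} \to \infty$ gives convergence of $Y_n^{(i)}$ to $Y^{(i)} \in \GL_d(R)$. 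Simultaneously, $A_n^{(i)}$ is $\varpi$-adically constant modulo $\varpi$, so it converges in $\Mat_d^{h^{(i)}}(\Sig_F)$ to some $\Af^{(i)}$ with $\Af^{(i)} \equiv A_1^{(i)} \pmod\varpi$, while $C_n^{(i)} \to 0$ in the $E$-adically complete $S_F$, yielding the desired identity $Y^{(i)} *_\varphi A^{(i)} = \Af^{(i)}$.
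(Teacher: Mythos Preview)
Your proposal is correct and follows essentially the same approach as the paper's proof: the same factorization $C_1^{(i)} = C_1'^{(i)}B_1^{(i)}A_1^{(i)}$, the same key exponent $\ell_1^{(i)} = h_1^{(i)} - h^{(i)} - \lfloor h_1^{(i)}/p\rfloor$ with cutoff $h_2^{(i)} = p(\ell_1^{(i)}+1)$, the same inequality $h_1^{(i)}(1-2p^{-1}) \ge h^{(i)}$ (which is where $p>2$ enters), and the same convergence argument in $R$ via $j - h^{(i)} \ge \lfloor j/p\rfloor$. The only differences are cosmetic bookkeeping (the paper sets $Y_0^{(i)}=\ident_d$ and tracks the $f$-cycle more explicitly), not substantive.
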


To prove Theorem \ref{descent-thm}, we will need to justify the use of Proposition \ref{red-alg-prop} on $\As_m^{(i)}=\Af_0^{(i)}+C^{(i)}$. Such a justification requires the following two lemmas.

\begin{lem}\label{I-decomp-lem}
	For any element $x\in I_c$, we may write $x\in\varpi\Sig_F+J_c$. Moreover, by assuming the Descent Assumptions \ref{descent-assump} and decomposing $C^{(i)}\in\Mat_2(I_c)$ in this way, then $\As_m^{(i)}=\Af'^{(i)}+C'^{(i)}$ where $\Af'^{(i)}\in\Mat_2(\Sig_F)$ and $C'^{(i)}\in\Mat_2(J_c)$ with $\Af'^{(i)}\equiv\Af_0\pmod\varpi$ and  $\det(\Af'^{(i)})\in E^{k_i}\Sig_F^*$.
	\begin{proof}
		Since both $\varpi\Fil^{cp}S_F, E\Fil^{cp}S_F\in J$ then we need only show $\varpi p^{c-1} S_F\in \varpi\Sig_F+J_c$. This is easily seen once we notice that any element of $\varpi p^{c-1}S_F$ takes the form
		\[\varpi p^{c-1}\sum_{j=0}^{\infty}\alpha_j\frac{E^{jp}}{p^j}=\varpi \left(\sum_{j=0}^{c-1}p^{c-1-j}\alpha_j E^{jp}\right)+\varpi\Fil^{cp}S_F \in \varpi\Sig_F+\varpi\Fil^{cp}S_F\subset \varpi\Sig_F+J_c.\]
		
		By the first part of this lemma, we need only show that $\det(\Af'^{(i)})\in E^{k_i}\Sig_F^*$. By the definitions of $(X_n^{(i)})$, we know that $\det(\As_m^{(i)})=\det(\As^{(i)})= E^{k_i}a_1^{(i)}$. It is easy to see that $\det(\Af'^{(i)})= E^{k_i}a_1^{(i)}\pmod{\Fil^{cp}S_F}$ and hence, $\det(\Af'^{(i)})-E^{k_i}a_1^{(i)}\in \Fil^{cp}S_F\cap \Sig_F=E^{cp}\Sig_F$. Since each $k_i<cp$ then $\det(\Af'^{(i)})\in E^{k_i}a_1^{(i)}+E^{cp}\Sig_F\in E^{k_i}\Sig_F^*$.
	\end{proof}
\end{lem}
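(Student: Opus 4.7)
The plan is to address the three parts of the lemma in sequence, relying on the explicit presentation $S_F=\Sig_F[\![E^p/p]\!]$.

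For the containment $I_c\subseteq\varpi\Sig_F+J_c$, the summands $\varpi\Fil^{cp}S_F$ and $E\Fil^{cp}S_F$ already lie in $J_c$ by definition, so only $\varpi p^{c-1}S_F$ requires work. An arbitrary element expands as $\varpi p^{c-1}\sum_{j\ge 0}\alpha_j(E^p/p)^j$ with $\alpha_j\in\Sig_F$, and splitting the sum at $j=c$ places the head $\varpi\sum_{j=0}^{c-1}p^{c-1-j}\alpha_jE^{jp}$ in $\varpi\Sig_F$ (nonnegative powers of $p$) and the tail in $\varpi\Fil^{cp}S_F\subseteq J_c$. Applying this containment entry-wise to $C^{(i)}\in\Mat_2(I_c)$ yields a decomposition $C^{(i)}=C_\Sig^{(i)}+C'^{(i)}$ with $C_\Sig^{(i)}\in\Mat_2(\varpi\Sig_F)$ and $C'^{(i)}\in\Mat_2(J_c)$; setting $\Af'^{(i)}\coloneqq\Af_0^{(i)}+C_\Sig^{(i)}\in\Mat_2(\Sig_F)$ makes both $\As_m^{(i)}=\Af'^{(i)}+C'^{(i)}$ and $\Af'^{(i)}\equiv\Af_0^{(i)}\pmod\varpi$ immediate.

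The determinant identity is the crux. The first step is to invoke Descent Assumption (b) to conclude that neither form of base change alters $\det(\As^{(i)})$: the $\SL_2(S_F[1/p])$-type contributions are automatic, and the diagonal-over-$F$ matrices collapse because $\varphi$ acts trivially on $F$, forcing the ratio $\det(X_n^{(i)})/\det(\varphi(X_n^{(i-1)}))$ to telescope away. Combined with Proposition \ref{det-base-change-existance-prop}, this yields $\det(\As_m^{(i)})=\det(\As^{(i)})=E^{k_i}a_1^{(i)}$ with $a_1^{(i)}\in\oh_F^*$. Expanding the $2\times 2$ determinant of $\Af'^{(i)}+C'^{(i)}$ places every cross-term in the ideal $J_c\subseteq\Fil^{cp}S_F$, whence $\det(\Af'^{(i)})\equiv E^{k_i}a_1^{(i)}\pmod{\Fil^{cp}S_F}$. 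Since $\det(\Af'^{(i)})\in\Sig_F$, the difference sits in $\Sig_F\cap\Fil^{cp}S_F=E^{cp}\Sig_F$, so $\det(\Af'^{(i)})=E^{k_i}(a_1^{(i)}+E^{cp-k_i}\beta)$ for some $\beta\in\Sig_F$. The Descent Assumption $c(p-2)\ge k_i$ forces $cp-k_i\ge 1$, making $a_1^{(i)}+E^{cp-k_i}\beta$ congruent to the unit $a_1^{(i)}$ modulo the maximal ideal of the local ring $\Sig_F$, and hence itself a unit.

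I expect the main obstacle to be the identity $\Sig_F\cap\Fil^{cp}S_F=E^{cp}\Sig_F$ used in the last step to upgrade a congruence modulo $\Fil^{cp}S_F$ into an honest factorization inside $\Sig_F$. I plan to verify it from the presentation $S_F=\Sig_F[\![E^p/p]\!]$ by observing that $(E^p/p)^n=E^{cp}\cdot p^{-n}\cdot(E^p/p)^{n-c}$ for $n\ge c$ already carries the factor $E^{cp}$, so these powers vanish in $S_F/E^{cp}S_F$; consequently $S_F/E^{cp}S_F$ is generated over $\Sig_F/E^{cp}\Sig_F$ by $\{(E^p/p)^n\}_{0\le n<c}$, with the degree-zero piece providing an injection $\Sig_F/E^{cp}\Sig_F\hookrightarrow S_F/E^{cp}S_F$.
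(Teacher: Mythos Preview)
Your proposal is correct and follows essentially the same route as the paper: split the $\varpi p^{c-1}S_F$ summand at $j=c$ using the presentation $S_F=\Sig_F[\![E^p/p]\!]$, then compare $\det(\Af'^{(i)})$ to $\det(\As_m^{(i)})=E^{k_i}a_1^{(i)}$ modulo $\Fil^{cp}S_F$ and intersect with $\Sig_F$. You supply more detail than the paper at two points the paper leaves implicit --- the justification that the base changes in Descent Assumption~(b) preserve the determinant, and the identity $\Sig_F\cap\Fil^{cp}S_F=E^{cp}\Sig_F$ --- but the underlying argument is identical.
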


\begin{lem}\label{frob-h-mat-lem}
	We have $\As^{(i)}\in\Mat_2^{k_i}(S_F)$ and $\As_m^{(i)}\in\Mat_2^{k_i}(S_F[1/p])$.
	\begin{proof}
		The fact that $\As^{(i)}\in\Mat_2^{k_i}(S_F)$ is clear. For the second part of the lemma, observe that \[\As_m^{(i)}=X_m^{(i)}X_{m-1}^{(i)}\cdots X_1^{(i)}\As\varphi(X_1^{(i)}\cdots X_{m-1}^{(i)}X_m^{(i)})^{-1}.\]
		Hence, by setting \[B_m^{(i)}=\varphi(X_m^{(i)}\cdots X_1^{(i)})B^{(i)}(X_1^{(i)}\cdots X_{m-1}^{(i)}X_m^{(i)})^{-1}\] then the fact that $\As_m^{(i)}\in\Mat_2^{k_i}(S_F[1/p])$ follows from the first part of the lemma and the fact that $\As^{(i)}B^{(i)}=B^{(i)}\As^{(i)}=E^{k_i}\ident_2$ in $\Mat_2(S_F)$.
	\end{proof}
\end{lem}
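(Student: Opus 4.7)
The plan is to prove both claims by exhibiting an explicit companion matrix. For the first claim, the companion of $\As^{(i)}$ is essentially its adjugate rescaled by $E^{k_i}/\det(\As^{(i)})$, and the key observation is that the denominators arising in that rescaling are units. For the second claim, one simply transports that companion through the base changes $X_1^{(i)},\dots,X_m^{(i)}$.

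For the first claim, recall from Section \ref{change-of-basis-sec} that $\As^{(i)}$ is either Type $\I$ or Type $\II$ and that in either case $\det(\As^{(i)}) = \pm E^{k_i}a_1^{(i)}$ with $a_1^{(i)}\in\oh_F^*$ and $\lambda_b\in S_F^*$ by Lemma \ref{det-base-change-system-lem}. In the Type $\I$ case, a direct calculation shows that
\[
B^{(i)} \coloneqq \begin{pmatrix} -a_2^{(i)}\lambda_b^{h^{(i)}(\varphi)-g^{(i)}(\varphi)}/a_1^{(i)} & E^{k_i} \\ 1/a_1^{(i)} & 0 \end{pmatrix}
\]
lies in $\Mat_2(S_F)$ and satisfies $\As^{(i)}B^{(i)} = B^{(i)}\As^{(i)} = E^{k_i}\ident_2$; the Type $\II$ case is entirely analogous, with the rows and columns swapped. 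This witnesses $\As^{(i)}\in\Mat_2^{k_i}(S_F)$.

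For the second claim, set $P_m^{(i)} \coloneqq X_m^{(i)}X_{m-1}^{(i)}\cdots X_1^{(i)}$, so that by construction
\[
\As_m^{(i)} = P_m^{(i)}\,\As^{(i)}\,\varphi(P_m^{(i)})^{-1}.
\]
Assumption \ref{descent-assump}(b) guarantees that each $X_n^{(i)}$ is invertible in $\GL_2(S_F[1/p])$: either it already lies there with unit determinant, or it is diagonal with scalar entries in $F\subset S_F[1/p]$. Hence $P_m^{(i)}$ and $\varphi(P_m^{(i)})$ both lie in $\GL_2(S_F[1/p])$, and we may define
\[
B_m^{(i)} \coloneqq \varphi(P_m^{(i)})\, B^{(i)}\, (P_m^{(i)})^{-1} \in \Mat_2(S_F[1/p]).
\]
Combining the formulas gives $\As_m^{(i)} B_m^{(i)} = P_m^{(i)}(\As^{(i)}B^{(i)})(P_m^{(i)})^{-1} = E^{k_i}\ident_2$, and symmetrically on the other side, so $\As_m^{(i)}\in\Mat_2^{k_i}(S_F[1/p])$.

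There is no substantive obstacle: the computation is a matter of bookkeeping the $\varphi$-twist in the base change action and observing that inverting the $X_n^{(i)}$'s only costs denominators in $p$. The main care needed is placing $\varphi(P_m^{(i)})$ on the correct side of $B^{(i)}$ so that the cancellations with $\varphi(P_m^{(i)})^{-1}$ on the right of $\As_m^{(i)}$ go through.
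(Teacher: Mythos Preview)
Your proof is correct and follows essentially the same route as the paper's: explicitly exhibit $B^{(i)}$ witnessing $\As^{(i)}\in\Mat_2^{k_i}(S_F)$ (you are more explicit here than the paper, which simply declares it ``clear''), then transport $B^{(i)}$ through the composite base change to obtain $B_m^{(i)}$. One minor bookkeeping slip worth noting---shared by the paper's own write-up---is that since $*_\varphi$ conjugates by $\varphi(X^{(i-1)})$ rather than $\varphi(X^{(i)})$, the right-hand factor in $\As_m^{(i)}$ should be $\varphi(P_m^{(i-1)})^{-1}$, and correspondingly $B_m^{(i)}=\varphi(P_m^{(i-1)})B^{(i)}(P_m^{(i)})^{-1}$; the cancellation then proceeds exactly as you describe.
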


\begin{proof}[Proof of Theorem \ref{descent-thm}]
	Our plan is to justify the use of Proposition \ref{red-alg-prop} on $\As_m^{(i)}$ with $h_1^{(i)}=cp$. At which point, the existence of $(Y^{(i)})$ with the desired properties will be implied.
	
	We begin by invoking Lemma \ref{I-decomp-lem} in order to write $\As_m^{(i)}=\Af'^{(i)}+C'^{(i)}$ where $\Af'^{(i)}\in\Mat_2(\Sig_F)$ and $C'^{(i)}\in\Mat_2(J_c)$ with $\Af'^{(i)}\equiv\Af_0\pmod\varpi$ and  $\det(\Af'^{(i)})= E^{k_i}\alpha$ for some $\alpha\in\Sig_F^*$. To see that $\Af'^{(i)}\in\Mat_2^{k_i}(\Sig_F)$, we need only define $B'^{(i)}=\alpha^{-1}\det(\Af'^{(i)})(\Af'^{(i)})^{-1}\in\Mat_2(\Sig_F)$. A brief matrix computation will show that $\Af'^{(i)}B'^{(i)}=B'^{(i)}\Af'^{(i)}=E^{k_i}\ident_2$ so that $\Af'^{(i)}\in\Mat_2^{k_i}(\Sig_F)$.
	
	The only hold up to the application of Proposition \ref{red-alg-prop} lies in the fact that $C'^{(i)}\in\Mat_2(J_c)$ and $J_c$ is not in $\varpi\Fil^{cp}S_F$. To do this, we follow similar ideas to the proof of the aforementioned proposition. We may decompose $C'^{(i)}=\tilde{C}^{(i)}E^{k_i}\ident_2$ as $k_i<cp$. Hence by the above analysis, we may write $C'^{(i)}=\tilde{C}^{(i)}E^{k_i}\ident_2=\tilde{C}^{(i)}B'^{(i)}\Af'^{(i)}$. Let $Z^{(i)}=(\ident_2+\tilde{C}^{(i)}B'^{(i)})^{-1}\in\GL_2(S_F)$ and observe that
	\begin{align*}
		(Z^{(i)})*_\varphi(A_m^{(i)}) &= (\ident_2+\tilde{C}^{(i)}B'^{(i)})^{-1}\cdot A_m^{(i)}\cdot(\ident_2+\varphi(\tilde{C}^{(i-1)}B'^{(i-1)})) \\
		&= (\ident_2+\tilde{C}^{(i)}B'^{(i)})^{-1}(\ident_2+\tilde{C}^{(i)}B'^{(i)})\Af'^{(i)}(\ident_2+\varphi(\tilde{C}^{(i-1)}B'^{(i-1)})) \\
		&=\Af'^{(i)}(\ident_2+\varphi(\tilde{C}^{(i-1)}B'^{(i-1)})).
	\end{align*}
	
	Since $C'^{(i)}\in\Mat_2(J_c)$ then any element of $\tilde{C}^{(i)}$ will take the form $x=\sum_{j\ge cp}a_j\frac{E^{j-k_i}}{p^{\lfloor j/p\rfloor}}$ with $a_j\in (\varpi,E)\Sig_F$. Since $\varphi(E)=p\gamma\in p S_F$ and $k_i\le c(p-2)$ then $j-k_i\ge 2c$ so that $\varphi(E)^{j-k_i}\in p^{2c}S_F$. Hence, $\varphi(\tilde{C}^{(i)})\in\Mat_2(\varpi p^{c-1} S_F)$ so we decompose $\varphi(\tilde{C}^{(i)})=D_1^{(i)}+D_2^{(i)}$ where $D_1^{(i)}\in\Mat_2(\varpi\Sig_F)$ and $D_2^{(i)}\in\Mat_2(\varpi\Fil^{cp}S_F)$. By setting
	\begin{align*}
		A_1^{(i)}&=\Af'^{(i)}(\ident_2+D_1^{(i-1)}\varphi(B'^{(i-1)}))&C_1^{(i)}&=\Af'^{(i)}D_2^{(i-1)}\varphi(B'^{(i-1)}),
	\end{align*}
	
	\noindent it is easy to see that $A_1^{(i)}\in\Mat_2^{k_i}(\Sig_F)$ with $A_1^{(i)}\equiv\Af'^{(i)}\pmod\varpi$ and $C_1^{(i)}\in\Mat_2(\varpi\Fil^{cp}S_F)$. Since $k_i\le c(p-2)$ then setting $h_1^{(i)}=cp$ for all $i\in\Z/f\Z$ allows us to use Proposition \ref{red-alg-prop} on $\As_m^{(i)}=A_1^{(i)}+C_1^{(i)}$ with $\As_m^{(i)}\in\Mat_2^{k_i}(S_F[1/p])$ by Lemma \ref{frob-h-mat-lem}. We conclude the existence of a matrix $Y\in\GL_2(R[1/p])$ such that $Y^{(i)}*_\varphi \As_m^{(i)}=\Af^{(i)}\in\Mat_2^{k_i}(\Sig_F)$ with $\Af^{(i)}\equiv A_1^{(i)}\equiv\Af'^{(i)}\equiv\Af_0^{(i)}\pmod\varpi$.
\end{proof}

\subsection{Preparatory Computations for Descent}\label{prep-comp-chap}

Recall that in Section \ref{Explicit-Kisin-mod-chap}, we were able to achieve an explicit description of Kisin modules $\Ms(\As)$ of finite height over $S_F=\Sig_F[\![E^p/p]\!]$ by describing their $i$-th partial Frobenius actions by the matrix $\As^{(i)}$ in terms of Types given by:
\begin{itemize}
	\item Type $\I$: \[\As^{(i)}=\begin{pmatrix}
		0 & E^{k_i}a_1^{(i)} \\ 1 & a_2^{(i)}\lambda_b^{g^{(i)}(\varphi)}
	\end{pmatrix};\]
	\item Type $\II$: \[\As^{(i)}=\begin{pmatrix}
		E^{k_i}a_1^{(i)} & 0 \\ a_2^{(i)}\lambda_b^{g^{(i)}(\varphi)} & 1
	\end{pmatrix}.\]
\end{itemize}

\noindent for some polynomials $g^{(i)}(\varphi)\in\Z[\varphi]$. Let us identify two disjoint subsets
\begin{align*}
	\Ss&\coloneqq\{i\in\Z/f\Z: \As^{(i)} \hspace{.1cm}\text{\normalfont is Type I}\hspace{.1cm}\}&\Ts&\coloneqq\{i\in\Z/f\Z: \As^{(i)} \hspace{.1cm}\text{\normalfont is Type II}\}.
\end{align*}

\noindent Hence, we have decomposed the set $\Z/f\Z=\Ss\sqcup\Ts$. We then have that $b=f$ if and only if the order $|\Ss|$ is even and $b=2f$ otherwise by Proposition \ref{det-base-change-existance-prop}. Additionally by Proposition \ref{red-fil-phi-mod-prop}, we see that $a_2^{(i)}\in\varpi\oh_F$ for all $i\in\Ts$ and $\prod_{i\in\Ss}a_2^{(i)}\in\varpi\oh_F$. We will continue to simplify notation by defining $(xy)^{(i)}=x^{(i)}y^{(i)}$.

\begin{defin}\label{height-c-def}
	For each $i\in\Z/f\Z$, let $c^{(i)}\ge1$ be the smallest integer such that $k_i\le c^{(i)}(p-2)$. We define $c_{max}=\max_i\{c^{(i)}\}$.
\end{defin}

As an implementation of the descent algorithm found in Section \ref{descent-alg-sec}, we would like to display the computations necessary to show that the Descent Assumptions \ref{descent-assump} hold for $\Ms(\As)$ when $c=c_{max}$ so long as $\nu_p(a_2^{(i)})$ is sufficiently large.

Looking at $\As^{(i)}$ regardless of Type, it is apparent that our only obstacle is the term $\lambda_b^{g^{(i)}(\varphi)}\in S_F^*$. Hence, our first move will be a row operation over $S_F[1/p]$ designed to strip off as many terms from each $\lambda_b^{g^{(i)}(\varphi)}$ as we can. Since $\lambda_b\in S_F^*$, let us write $\lambda_b^{g^{(i)}(\varphi)}=\sum_{j=0}^{\infty}\alpha^{(i)}_j (E^p/p)^j$ where each $\alpha^{(i)}_j\in\oh_F[u]$ of maximal degree $p-1$. For each $i\in\Z/f\Z$, define elements in $S_F[1/p]$ given by
\[x^{(i)}=-\left(\frac{a_2}{a_1}\right)^{(i)}\sum_{j=c^{(i)}}^{\infty}\alpha^{(i)}_j \frac{E^{jp-k_i}}{p^j}.\]

Set $X_1^{(i)}=\begin{psmallmatrix}
	1 & 0 \\ x^{(i)} & 1
\end{psmallmatrix}\in\GL_2(S_F[1/p])$ for all $i\in\Z/f\Z$ and observe that by setting $(X_1)*_\varphi(\As)=(\As_1)$ we will obtain
\[\As_1^{(i)}=\begin{dcases}
		\begin{pmatrix}
			E^{k_i}a_1^{(i)}\varphi(x^{(i-1)}) & E^{k_i}a_1^{(i)} \\ 1+\left(\sum_{j=0}^{c^{(i)}-1}a_2^{(i)}\alpha_j\frac{E^{jp}}{p^j}\right)\varphi(x^{(i-1)}) & \left(\sum_{j=0}^{c^{(i)}-1}a_2^{(i)}\alpha_j\frac{E^{jp}}{p^j}\right)
		\end{pmatrix} & \text{if	} i \in\Ss\\
		\begin{pmatrix}
			E^{k_i}a_1^{(i)} & 0 \\ \left(\sum_{j=0}^{c^{(i)}-1}a_2^{(i)}\alpha_j\frac{E^{jp}}{p^j}\right)+\varphi(x^{(i-1)}) & 1
		\end{pmatrix}& \text{if	} i \in\Ts. \\
	\end{dcases}\] 

\noindent We now require a lemma that makes sure  none of the $\varphi(x^{(i)})$-terms introduced as a result of $\varphi$-conjugation are non-integral or non-$I_{c_{max}}$ factors.

\begin{lem}\label{phi-admis-lem}
	Using the notations from above, suppose $\nu_p(a_2^{(i)})> c_{max}-c^{(i)}-1$. Then $\varphi(x^{(i)})\in \varpi p^{c_{max}-1}S_F\subset I_{c_{max}}$. 
	\begin{proof}
		Since each $\alpha_j\in \oh_F[u]$, then we need only prove that $a_2^{(i)}\varphi(E^{pj-k_i}/p^j)\in \varpi p^{c_{max}-1} S_F$ for each $j\ge c^{(i)}$. Let us write
		\[a_2^{(i)}\varphi\left(\frac{E^{pj-k_i}}{p^j}\right)=a_2^{(i)}\frac{\varphi(E)^{pj-k_i}}{p^j}=a_2^{(i)}p^{pj-k_i-j}\gamma^{pj-k_i}.\]
		Since $k_i\le c^{(i)}(p-2)$, then $pj-k_i-j\ge c^{(i)}$ for all $j\ge c^{(i)}$. Hence, so long as $\nu_p(a_2^{(i)})> c_{max}-c^{(i)}-1$ then we may conclude the desired result.
	\end{proof}
\end{lem}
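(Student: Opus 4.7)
The plan is to reduce the claim to a termwise analysis of the series defining $x^{(i)}$ and then carefully bound powers of $p$ and $\varpi$. The key point is that only the factor $\varphi(E^{pj-k_i}/p^j)$ and the prefactor $a_2^{(i)}$ carry $p$-adic content; everything else is already integral.

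First, I would note that the map $\varphi$ sends $u \mapsto u^p$ and acts as $\sigma_K$ on coefficients, so it preserves $\oh_F[u]$ termwise. Since each $\alpha_j^{(i)} \in \oh_F[u]$ and $a_1^{(i)} \in \oh_F^*$ (as $\As^{(i)}$ is Type $\I$ or $\II$, both of which have invertible $a_1^{(i)}$), the factor $\varphi(a_2^{(i)}/a_1^{(i)}) \cdot \varphi(\alpha_j^{(i)})$ lies in $a_2^{(i)} \oh_F\pu$. It therefore suffices to check that
\[
a_2^{(i)}\,\varphi\!\left(\frac{E^{pj-k_i}}{p^j}\right) \in \varpi p^{c_{max}-1} S_F \qquad \text{for every } j \ge c^{(i)},
\]
because $\varpi p^{c_{max}-1} S_F \subseteq I_{c_{max}}$ by definition of $I_{c_{max}}$.

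Next I would exploit the identity $\varphi(E) = p\gamma$ with $\gamma \in S_F^*$ to rewrite
\[
a_2^{(i)}\,\varphi\!\left(\frac{E^{pj-k_i}}{p^j}\right) = a_2^{(i)}\, p^{j(p-1)-k_i}\, \gamma^{pj-k_i}.
\]
The exponent on $p$ is $j(p-1)-k_i$, and since $c^{(i)}$ is by definition the smallest integer with $k_i \le c^{(i)}(p-2)$, for $j \ge c^{(i)}$ I obtain
\[
j(p-1)-k_i \;\ge\; c^{(i)}(p-1) - c^{(i)}(p-2) \;=\; c^{(i)}.
\]
Hence each such term lies in $a_2^{(i)} p^{c^{(i)}} S_F$.

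Finally, I would compare $a_2^{(i)} p^{c^{(i)}} S_F$ with the target ideal $\varpi p^{c_{max}-1} S_F$. The containment is equivalent to $a_2^{(i)} p^{c^{(i)}-c_{max}+1}/\varpi \in \oh_F$, which amounts to the valuation condition $\nu_p(a_2^{(i)}) \ge c_{max}-c^{(i)}-1 + \nu_p(\varpi)$; since $\nu_p(\varpi) > 0$, this is guaranteed by the strict hypothesis $\nu_p(a_2^{(i)}) > c_{max}-c^{(i)}-1$. I do not expect any real obstacle here: the argument is bookkeeping, with the only subtlety being to remember that $\varpi$ is a uniformizer of the (possibly ramified) coefficient field $F$, so the required inequality on $\nu_p(a_2^{(i)})$ must be strict rather than weak in order to absorb the factor $\nu_p(\varpi)$.
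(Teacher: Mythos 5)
Your proof is correct and follows essentially the same route as the paper's: reduce to the single term $a_2^{(i)}\varphi(E^{pj-k_i}/p^j)$, use $\varphi(E)=p\gamma$ to extract the exponent $j(p-1)-k_i\ge c^{(i)}$, and absorb the remaining deficit into $\nu_p(a_2^{(i)})$. Your extra remarks on $\varphi$ preserving $\oh_F[u]$ and on strictness of the inequality absorbing $\nu_p(\varpi)$ are accurate details the paper leaves implicit.
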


With this, we observe  that the only non-integral and non-$I_{c_{max}}$ term is \[\sum_{j=0}^{c^{(i)}-1}a_2^{(i)}\alpha_j\frac{E^{jp}}{p^j}.\] If we assume that $\nu_p(a_2^{(0)})> c^{(i)}-1$ then this term becomes an element of $\varpi\Sig_F$ and we achieve satisfied descent.

\begin{prop}\label{dec-assump-large-val-prop}
	Suppose $\Ms(\As)$ is a rank-two Kisin module over $S_F$ with labeled heights $k_i\le c^{(i)}(p-2)$ with $c^{(i)}\ge1$ taken minimally and set $c_{max}=\max_i\{c^{(i)}\}$. If \[\nu_p(a_2^{(i)})> \max\{c_{max}-c^{(i)}-1,c^{(i)}-1\},\] then the Frobenius matrix $(\As)$ satisfies the Descent Assumptions \ref{descent-assump} with respect to $c=c_{max}$ and
	\begin{align*}
		\Af_0^{(i)}&=\begin{dcases}
			\begin{pmatrix}
				0 & E^{k_{i}}a_1^{(i)} \\ 1 & \left(\sum_{j=0}^{c^{(i)}-1}a_2^{(i)}\alpha_j\frac{E^{jp}}{p^j}\right)
			\end{pmatrix} & \text{if	} i\in\Ss\\
			\begin{pmatrix}
				E^{k_{i}}a_1^{(i)} & 0 \\ \left(\sum_{j=0}^{c^{(i)}-1}a_2^{(i)}\alpha_j\frac{E^{jp}}{p^j}\right) & 1
			\end{pmatrix}& \text{if	} i\in\Ts
		\end{dcases}
	\end{align*}
\end{prop}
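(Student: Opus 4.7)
The plan is to take $m = 1$ and to use the base change $(X_1^{(i)})$ already constructed in the discussion preceding the proposition. I would then verify the three conditions of Assumptions \ref{descent-assump} in turn, each of which becomes essentially a bookkeeping step once the explicit formula for $\As_1^{(i)} = (X_1^{(i)}) *_\varphi (\As^{(i)})$ is in hand.

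Conditions (a) and (b) are immediate. For (a), the minimality of $c^{(i)}$ gives $c^{(i)}(p-2) \ge k_i$, and $c_{max} \ge c^{(i)}$ then yields $c_{max}(p-2) \ge k_i$ for every $i$. For (b), each $X_1^{(i)}$ is a unipotent lower-triangular matrix over $S_F[1/p]$, so it lies in $\GL_2(S_F[1/p])$ with determinant $1$.

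The substance is condition (c). I would simply read off, from the explicit form of $\As_1^{(i)}$ computed above the proposition, the decomposition $\As_1^{(i)} = \Af_0^{(i)} + C^{(i)}$ with $\Af_0^{(i)}$ as in the statement and with $C^{(i)}$ collecting precisely those entries involving the factor $\varphi(x^{(i-1)})$. Two integrality checks then remain. First, to show $\Af_0^{(i)} \in \Mat_2(\Sig_F)$, the only entry requiring attention is the sum $S^{(i)} := \sum_{j=0}^{c^{(i)}-1} a_2^{(i)} \alpha_j E^{jp}/p^j$; since $\alpha_j \in \oh_F[u] \subset \Sig_F$ and the hypothesis $\nu_p(a_2^{(i)}) > c^{(i)} - 1 \ge j$ forces $a_2^{(i)}/p^j \in \varpi\oh_F$ for each $j \le c^{(i)} - 1$, every term is in $\varpi\Sig_F$ and hence so is $S^{(i)}$. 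Second, to show $C^{(i)} \in \Mat_2(I_{c_{max}})$, I would apply Lemma \ref{phi-admis-lem} at index $i-1$ (using the hypothesis $\nu_p(a_2^{(i-1)}) > c_{max} - c^{(i-1)} - 1$) to obtain $\varphi(x^{(i-1)}) \in \varpi p^{c_{max}-1}S_F \subset I_{c_{max}}$. Because $I_{c_{max}}$ is an ideal of $S_F$ and each entry of $C^{(i)}$ is a product of $\varphi(x^{(i-1)})$ with elements of $S_F$ (namely $E^{k_i}a_1^{(i)}$ or $S^{(i)}$ in Type $\I$, and $1$ in Type $\II$), every such entry lies in $I_{c_{max}}$.

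The main obstacle, such as it is, is simply tracking why the hypothesis needs to be phrased as a maximum: the two bounds $\nu_p(a_2^{(i)}) > c_{max} - c^{(i)} - 1$ and $\nu_p(a_2^{(i)}) > c^{(i)} - 1$ respond to two genuinely independent requirements—the former to push the residual factor $\varphi(x^{(i-1)})$ into $I_{c_{max}}$ via Lemma \ref{phi-admis-lem}, the latter to force $S^{(i)}$ into $\Sig_F$—and both must hold simultaneously for every $i \in \Z/f\Z$. Once this is kept straight, there is no further subtlety and the two cases $i \in \Ss$ and $i \in \Ts$ are handled uniformly by the same $m = 1$ base change.
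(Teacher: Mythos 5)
Your proposal is correct and follows essentially the same route as the paper: the paper's own justification is precisely the preceding computation of $(X_1^{(i)})*_\varphi(\As^{(i)})$, Lemma \ref{phi-admis-lem} handling the $\varphi(x^{(i-1)})$-terms via the bound $\nu_p(a_2^{(i)})>c_{max}-c^{(i)}-1$, and the observation that $\nu_p(a_2^{(i)})>c^{(i)}-1$ places $\sum_{j=0}^{c^{(i)}-1}a_2^{(i)}\alpha_j E^{jp}/p^j$ in $\varpi\Sig_F$. Your organization of this as a verification of Assumptions \ref{descent-assump}(a)--(c) with $m=1$, using that $I_{c_{max}}$ is an ideal of $S_F$, matches the paper's argument.
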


\begin{rem}
	If $\max\{c_{max}-c^{(i)}-1,c^{(i)}-1\}=c^{(i)}-1$, then we may take $\nu_p(a_2^{(i)})=c^{(i)}-1$ to get the same $\Af_0^{(i)}$ as Proposition \ref{dec-assump-large-val-prop}, however the sum $\sum_{j=0}^{c^{(i)}-1}a_2^{(i)}\alpha_j\frac{E^{jp}}{p^j}$ will now only be an element of $\Sig_F$. Calculating reductions in these cases are much more complicated and must be done by hand so we exclude them here. See \cite{Guz2} for a treatment of this case when $c_{max}=2$.
\end{rem}

It this point, it is convenient to restate the above bounds involving $c^{(i)}$ to be in terms of $k_i$. Define $k_{max}=\max_i\{k_i\}$ and observe that
\[c^{(i)}-1=\Bigg\lfloor\frac{k_i-1}{p-2}\Bigg\rfloor\]
since $k_i-1<c^{(i)}(p-2)$. Next, we have that 
\[c_{max}-1-c^{(i)}=(c_{max}-1)-(c^{(i)}-1)-1=\Bigg\lfloor\frac{k_{max}-1}{p-2}\Bigg\rfloor-\Bigg\lfloor\frac{k_i-1}{p-2}\Bigg\rfloor-1.\]
Hence, Proposition \ref{dec-assump-large-val-prop} holds on the bounds 
\[\nu_p(a_2^{(i)})>\max\left\{\Bigg\lfloor\frac{k_i-1}{p-2}\Bigg\rfloor,\Bigg\lfloor\frac{k_{max}-1}{p-2}\Bigg\rfloor-\Bigg\lfloor\frac{k_i-1}{p-2}\Bigg\rfloor-1\right\}.\]

\section{Computing Explicit Reductions}\label{comp-red-chap}

\subsection{From Descent to Reductions}\label{mod-rep-sec}

By Proposition \ref{construct-kis-mod-prop}, we have constructed a finite height Kisin module $\Ms(\As)$ over $S_F=\Sig_F[\![E^p/p]\!]$ from $D(A)$ with $i$-th partial Frobenius $\As^{(i)}$ given in terms of one of two Types originating from Proposition \ref{det-base-change-existance-prop}: 
\begin{itemize}
	\item Type $\I$: \[\As^{(i)}=\begin{pmatrix}
		0 & E^{k_i}a_1^{(i)} \\ 1 & a_2^{(i)}\lambda_b^{g^{(i)}(\varphi)}
	\end{pmatrix};\]
	\item Type $\II$: \[\As^{(i)}=\begin{pmatrix}
		E^{k_i}a_1^{(i)} & 0 \\ a_2^{(i)}\lambda_b^{g^{(i)}(\varphi)} & 1
	\end{pmatrix}.\]
\end{itemize}

Supposing that we are able to show that the associated Frobenius matrix $(\As^{(i)})$ satisfies the Descent Assumptions \ref{descent-assump}, then the results of Theorem \ref{descent-thm} implies the existence of a base change $(Y^{(i)})$ over $R=\Sig_F[\![\varpi E/p]\!]$ such that for all $i\in\Z/f\Z$, we have
\[Y^{(i)}*_\varphi\As^{(i)}=\Af^{(i)}\in\Mat_2^{k_i}(\Sig_F).\]
In particular, we will have found a basis $\{\eta_1,\eta_2\}$ of $\Ms\otimes_{S_F[1/p]}R[1/p]$ such that the $\Sig_F$-submodule generated by $\{\eta_1,\eta_2\}$ has Frobenius action given by $(\Af^{(i)})$. We denote this integral submodule by $\Mf(\Af)$ and observe that it is a rank-two Kisin module over $\Sig_F$. In other words, $\Mf(\Af)$ is a descent of $\Ms(\As)$ to $\Sig_F$ in the sense of Definition \ref{descent-def}.

In Subsection \ref{BK-chap} we displayed how one may use a Kisin module such as $\Mf(\Af)$ to compute reductions of $V(A)$ \textit{if} we can canonically associated it to some $G_\infty$-stable $\oh_F$-lattice $T\subset V(A)$. The following proposition shows that this is indeed the case.

\begin{prop}\label{canon-assoc-prop}
	Using the notations from above, there exists a $G_\infty$-stable $\oh_F$ lattice $T\subset V(A)$ which is canonically associated to $\Mf(\Af)$.
	\begin{proof}
		Let us set $\Ms_R=\Mf(\Af)\otimes_{\Sig_F}R[1/p]$. To display the proposition, we need only justify why we may apply Proposition \ref{kis-lattice-ident-prop} to $\Ms_R$. By Theorem \ref{descent-thm} we have that 
		\[\Ms_R\cong \M_\Kis(\D_\cris^*(V(A)))\otimes_{\Lambda_F}R[1/p]\]
		so we must justify why we may fit $R[1/p]=\Sig_F[\![u,\frac{\varpi E}{p}]\!][1/p]$ inside of $\Lambda_{F,s}$ for some $1/p<s<1$. However, this is clear once we notice that $E/p\in\Lambda_{F,1}$ so that $\varpi E/p\in\Lambda_{F,s}$ for some $s<1$.
		
		To be completely accurate, we have not shown that $1/p<s$ and indeed the inequality may not be true. To recover the situation, notice that if $s<1/p$ then there exists an $1/p<s'$ such that $\Lambda_{F,s}\subset\Lambda_{F,s'}$ since $p^{-s'}<p^{-s}$. Hence, if $s<1/p$, we may extend scalars to $\Lambda_{F,s'}$ to apply Proposition \ref{kis-lattice-ident-prop}.
	\end{proof}
\end{prop}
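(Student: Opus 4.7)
The plan is to reduce everything to a single invocation of Proposition \ref{kis-lattice-ident-prop}. That proposition requires us to produce, for some $1/p < s < 1$, an isomorphism
\[\Mf(\Af)\otimes_{\Sig_F}\Lambda_{F,s}\;\cong\;\M_\Kis(\D_\cris^*(V(A)))\otimes_{\Lambda_F}\Lambda_{F,s}\]
of finite height Kisin modules over $\Lambda_{F,s}$, so the entire proof reduces to checking this hypothesis.

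First I would set $R=\Sig_F[\![\varpi E/p]\!]$ and $\Ms_R=\Mf(\Af)\otimes_{\Sig_F}R[1/p]$. By construction of $\Mf(\Af)$, the descent base change $(Y^{(i)})\in\GL_2(R[1/p])^f$ produced by Theorem \ref{descent-thm} gives a $\varphi$-equivariant isomorphism between $\Ms_R$ and $\Ms(\As)\otimes_{S_F}R[1/p]$. Chasing through the explicit construction of $\Ms(\As)$ from Section \ref{Explicit-Kisin-mod-chap} together with the compatibility in Proposition \ref{BK-iso-prop}, the latter module is naturally identified with $\M_\Kis(\D_\cris^*(V(A)))\otimes_{\Lambda_F}R[1/p]$. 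So the isomorphism we need already exists at the level of $R[1/p]$, and the only remaining point is to compare $R[1/p]$ with some $\Lambda_{F,s}$.

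The second step is therefore a radius-of-convergence computation: since $E/p = 1 + u/p$ is bounded by $1$ on the disc $|u|<p^{-1}$, multiplication by $\varpi$ renders $\varpi E/p$ topologically nilpotent in $\Lambda_{F,s}$ for some positive $s<1$, yielding a continuous inclusion $R[1/p]\hookrightarrow \Lambda_{F,s}$. Base changing the above isomorphism from $R[1/p]$ to $\Lambda_{F,s}$ then furnishes the required identification.

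The only genuine obstacle is that the $s$ produced by the convergence estimate need not lie in the interval $(1/p,1)$ demanded by Proposition \ref{kis-lattice-ident-prop}. This is easily repaired: if the $s$ we obtain satisfies $s\le 1/p$, then for any $s'$ with $s<s'<1$ we have $\Lambda_{F,s}\subset \Lambda_{F,s'}$ (smaller disc, more convergent functions), so we may freely replace $s$ by such an $s'\in(1/p,1)$ and extend scalars along the inclusion. Applying Proposition \ref{kis-lattice-ident-prop} to the resulting isomorphism then produces a $G_\infty$-stable $\oh_F$-lattice $T\subset V(A)$ canonically associated to $\Mf(\Af)$, completing the proof.
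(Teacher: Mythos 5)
Your proposal is correct and follows essentially the same route as the paper: identify $\Mf(\Af)\otimes_{\Sig_F}R[1/p]$ with $\M_\Kis(\D_\cris^*(V(A)))\otimes_{\Lambda_F}R[1/p]$ via Theorem \ref{descent-thm}, embed $R[1/p]$ into some $\Lambda_{F,s}$ by the convergence observation on $\varpi E/p$, enlarge $s$ into $(1/p,1)$ if necessary using $\Lambda_{F,s}\subset\Lambda_{F,s'}$ for $s<s'$, and then invoke Proposition \ref{kis-lattice-ident-prop}. Your added care in tracing the identification of $\Ms(\As)\otimes_{S_F}R[1/p]$ with $\M_\Kis(\D_\cris^*(V(A)))\otimes_{\Lambda_F}R[1/p]$ through Propositions \ref{construct-kis-mod-prop} and \ref{BK-iso-prop} only makes explicit what the paper cites implicitly.
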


By setting $\overline{\Mf(\Af)}\coloneqq\Mf(\Af)/\varpi\Mf(\Af)$, Proposition \ref{det-red-prop} says that the reduction $\overline{V(A)}\vert_{G_\infty}=(T/\varpi T)^\semi\vert_{G_\infty}$ is completely determined by the $k_F$-representation of $G_\infty$ given by
\[\overline{V(A)}|_{G_\infty}\cong \V_{k_F}^*\left(\overline{\Mf(\Af)}\left[\frac{1}{u}\right]\right).\]
The object $\overline{\Mf(\Af)}[\frac{1}{u}]$ is an \'{e}tale $\varphi$-module over $k_F(\!(u)\!)$ as described in Definition \ref{et-phi-mod-def}. In order to compute $\V_{k_F}^*(\overline{\Mf(\Af)}[\frac{1}{u}])$, recall that we have fixed a $p$-power compatible sequence $\pi=(\pi_0,\pi_1,\pi_2,\dots)\in\oh_{\C_p}^\flat$. We may embed
\[k_F[\![u]\!]\hookrightarrow \oh_{\C_p}^\flat\hspace{1cm}\text{\normalfont via}\hspace{1cm} u\mapsto \pi\]
with this embedding being naturally compatible with respect to both $\varphi$ and $G_\infty$. This embedding lifts to $k_F(\!(u)\!)\hookrightarrow \C_p^\flat$ so that we may write
\[\V_{k_F}^*\left(\overline{\Mf(\Af)}\left[\frac{1}{u}\right]\right)\coloneqq\Hom_{k_F(\!(u)\!),\varphi}\left(\overline{\Mf(\Af)}\left[\frac{1}{u}\right],\C_p^\flat\right).\]
Since we have computed $\overline{\Mf(\Af)}$ to be defined over $k_F[\![u]\!]$, then $\V_{k_F}^*(\overline{\Mf(\Af)}[\frac{1}{u}])$ is completely determined by
\[\V_{k_F}^*\left(\overline{\Mf(\Af)}\right)=\Hom_{k_F[\![u]\!],\varphi}\left(\overline{\Mf(\Af)},\oh_{\C_p}^\flat\right).\]
We summarize the above discussion in the following diagram:
\begin{figure}[H]
	\centering
\begin{tikzcd}
	V(A) \arrow[rr, "\D_\cris^*"] \arrow[d] &  & D(A) \arrow[rr, "\text{Prop \ref{construct-kis-mod-prop}}"] &  & \Ms(\As) \arrow[d, "\text{Thm \ref{descent-thm}}"]                               \\
	T|_{G_\infty} \arrow[d]              &  &                                     &  & \Mf(\Af) \arrow[d, "\mod\varpi"] \arrow[llll, leftrightarrow, "\text{Prop \ref{canon-assoc-prop}}"'] \\
	\overline{V(A)}|_{G_\infty}          &  &                                     &  & \overline{\Mf(\Af)} \arrow[llll, "\V_{k_F}^*"']                   
\end{tikzcd}
\end{figure}

When $K=\Qp$, one normally computes the Hom space of $\V_{k_F}^*$ by considering the action of the semi-simple Frobenius matrix on the basis elements. However, in our case of $K=\Qpf$, we instead deal with $f$-many partial Frobenius matrices, one in each idempotent piece. It is therefore convenient to reduce to looking at a single Frobenius matrix in the following construction from \cite[\S 2]{CDM14}.

As an \'{e}tale $\varphi$-module, we may perform the usual idempotent decomposition via the equivalence functor of Theorem \ref{equiv-et-phi-thm}. Hence, $\overline{\Mf(\Af)}=\prod_{i}\overline{\Mf(\Af)}^{(i)}$ so that each $\overline{\Mf(\Af)}^{(i)}$ is a $\varphi$-module with Frobenius action given by $\overline{\Af}^{(i)}\coloneqq \Af^{(i)}\pmod\varpi$. We may endow the $k_F[\![u]\!]$-module $\overline{\Mf(\Af)}^{(0)}$ with the structure of an \'{e}tale $\varphi^f$-module by defining its Frobenius action to be given by the product
\[\prod_{i\in\Z/f\Z}\Af^{(i)}=\overline{\Af}^{(0)}\varphi\left(\overline{\Af}^{(1)}\right)\varphi^2\left(\overline{\Af}^{(2)}\right)\cdots\varphi^{f-1}\left(\overline{\Af}^{(f-1)}\right).\] 
Such objects are defined just as in Definition \ref{et-phi-mod-def} after replacing $\varphi$-linearity with $\varphi^f$-linearity. The associated category would then be denoted by $\Mod_{k_F(\!(u)\!)}^{\varphi^f,\et}$ and there is an evident functor 
\begin{align*}
	\varepsilon_0&:\Mod_{k_F(\!(u)\!)}^{\varphi,\et}\rightarrow \Mod_{k_F(\!(u)\!)}^{\varphi^f,\et}&(M,\varphi)&\mapsto (M^{(0)}, \varphi^f).
\end{align*}

\noindent The following lemma shows that is suffices to compute the functor $\V_{k_F}^*$ by only looking at this associated \'{e}tale $\varphi^f$-module via $\varepsilon_0$.

\begin{lem}\label{red-in-first-piece-lem}
	Using the notation from above, let $(M^{(0)},\varphi^f)$ be the \'{e}tale $\varphi^f$-module over $k_F(\!(u)\!)$ associated to an \'{e}tale $\varphi$-module $(M,\varphi)$ over $k_F(\!(u)\!)$ via $\varepsilon_0$. 
	Then
	\[\V_{k_F}^*\left(M\right)\cong\Hom_{k_F(\!(u)\!),\varphi^f}\left(M^{(0)},\oh_{\C_p}^\flat\right).\]
	\begin{proof}
		As detailed in \cite[Thm 2.1.6]{CDM14} there is an equivalence of categories 
		\[\M_{k_F}^{(0)}:\Rep_{\cris/k_F}(G_\infty)\rightarrow \Mod_{k_F(\!(u)\!)}^{\varphi^f,\et}.\]
		with quasi-inverse functor $\V_{k_F}^{(0)}$. However, \cite[Prop 2.1.7]{CDM14} implies that for any $V\in\Rep_{\cris/k_F}(G_\infty)$, there is a natural isomorphism $\M_{k_F}^{{(0)}*}(V)\cong\M_{k_F}^{*}(V)^{(0)}$in $\Mod_{k_F(\!(u)\!)}^{\varphi^f,\et}$ where $\M_{k_F}^{*}(V)^{(0)}$ is the image of $\M_{k_F}^{*}(V)$ under the functor $\varepsilon_0$. It follows that $\V_{k_F}^*(M)\cong \V_{k_F}^{{(0)}*}(M^{(0)})$ where $\V_{k_F}^{{(0)}*}$ is defined as in \cite{CDM14} and the above discussion by $\V_{k_F}^{{(0)}*}(M^{(0)})= \Hom_{k_F(\!(u)\!),\varphi^f}(M^{(0)},\oh_{\C_p}^\flat)$.
	\end{proof}
\end{lem}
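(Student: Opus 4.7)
The plan is to construct mutually inverse maps directly by exploiting the étale hypothesis, bypassing the categorical equivalences of \cite{CDM14}. Recalling that $\V_{k_F}^*(M)=\Hom_{k_F(\!(u)\!),\varphi}(M,\oh_{\C_p}^\flat)$ carries its $G_\infty$-action via post-composition on the target $\oh_{\C_p}^\flat$, and that the right-hand side of the lemma carries its action in the same way, it suffices to exhibit a natural bijection of Hom-sets which is obviously $G_\infty$-equivariant.

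In the forward direction I would define the restriction map $\rho:h\mapsto h|_{M^{(0)}}$. Because $\varphi$ cycles the pieces as $\varphi^{(i)}:M^{(i-1)}\to M^{(i)}$, the $f$-fold iterate $\varphi^f$ stabilizes $M^{(0)}$, and any $\varphi$-equivariant $h$ automatically commutes with $\varphi^f$ upon restriction. This yields a well-defined $k_F(\!(u)\!)$-linear, $\varphi^f$-equivariant map $M^{(0)}\to\oh_{\C_p}^\flat$.

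For the inverse, given a $\varphi^f$-equivariant $g:M^{(0)}\to\oh_{\C_p}^\flat$, the idea is to extend cyclically. Set $h^{(0)}=g$ and, for $i=1,\ldots,f-1$, use that the étale property gives an isomorphism $1\otimes\varphi^{(i)}:k_F(\!(u)\!)\otimes_{\varphi}M^{(i-1)}\xrightarrow{\sim} M^{(i)}$ to write every $m\in M^{(i)}$ uniquely as $\sum c_j\varphi^{(i)}(n_j)$ with $c_j\in k_F(\!(u)\!)$ and $n_j\in M^{(i-1)}$; then define
\[ h^{(i)}\Big(\sum c_j\varphi^{(i)}(n_j)\Big)\coloneqq \sum c_j\,\varphi\big(h^{(i-1)}(n_j)\big), \]
where the outer $\varphi$ denotes the absolute Frobenius on $\oh_{\C_p}^\flat$. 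Uniqueness of the representation forces $h^{(i)}$ to be well-defined and $k_F(\!(u)\!)$-linear, and the defining relation is precisely $\varphi$-equivariance across consecutive pieces.

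The last check—and the main technical obstacle—is consistency at the wrap-around $i=f$: the map $h^{(0)}$ one recovers after a full cycle of the above recipe must agree with the original $g$. This reduces to the identity $g(\varphi^f(n))=\varphi^f(g(n))$ for $n\in M^{(0)}$, which is exactly the $\varphi^f$-equivariance hypothesis on $g$. Hence $h=(h^{(i)})$ assembles into an element of $\Hom_{k_F(\!(u)\!),\varphi}(M,\oh_{\C_p}^\flat)$ with $\rho(h)=g$, and since any $\varphi$-equivariant $h$ necessarily satisfies $h^{(i)}\circ\varphi^{(i)}=\varphi\circ h^{(i-1)}$, the two constructions are manifestly mutually inverse and natural in $M$, completing the proof.
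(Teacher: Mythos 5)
Your argument is correct, but it takes a genuinely different route from the paper. The paper simply quotes the machinery of \cite{CDM14}: the equivalence $\M_{k_F}^{(0)}:\Rep_{\cris/k_F}(G_\infty)\rightarrow\Mod_{k_F(\!(u)\!)}^{\varphi^f,\et}$ together with the compatibility $\M_{k_F}^{(0)*}(V)\cong\M_{k_F}^*(V)^{(0)}$, and deduces the identification of Hom-spaces formally. You instead build the bijection by hand: restriction $h\mapsto h|_{M^{(0)}}$ in one direction, and in the other direction cyclic propagation of a $\varphi^f$-equivariant $g$ through the chain $M^{(0)}\rightarrow M^{(1)}\rightarrow\cdots\rightarrow M^{(f-1)}$ using that each partial linearization $1\otimes\varphi^{(i)}:k_F(\!(u)\!)\otimes_{\varphi}M^{(i-1)}\rightarrow M^{(i)}$ is an isomorphism, with the wrap-around consistency reducing exactly to $\varphi^f$-equivariance of $g$ — this is in effect an unwinding of the proof of \cite[Prop 2.1.7]{CDM14}, so it makes the mechanism transparent and keeps the lemma self-contained, whereas the paper's citation is shorter and inherits naturality and the $G_\infty$-equivariance package for free. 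Two small points you should tighten if you keep the direct proof: the expression $m=\sum c_j\varphi^{(i)}(n_j)$ is not unique as a sum, only as an element of $k_F(\!(u)\!)\otimes_{\varphi}M^{(i-1)}$, so define $h^{(i)}$ on that tensor product (where bilinearity gives well-definedness, using $k_F(\!(u)\!)$-linearity of $h^{(i-1)}$ and compatibility of $u\mapsto\pi$ with the Frobenii) and transport it along $(1\otimes\varphi^{(i)})^{-1}$; and note that ``$k_F(\!(u)\!)$-linear into $\oh_{\C_p}^\flat$'' requires the usual convention (target $\oh_{\C_p}^\flat\otimes_{\F_p}k_F$, or $k_F$-semilinearity conventions), since the $p$-power Frobenius on $\oh_{\C_p}^\flat$ is not $k_F$-linear — this imprecision is already present in the surrounding text, but your construction invokes it explicitly, so it deserves a word.
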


We are now ready to explorer the representation theory necessary to compute the reduction $\V_{k_F}^*(\overline{\Mf(\Af)})$. Let $I_K$ denote the inertial subgroup of $G_K$ and choose for any $r\ge1$ a $p^r-1$-th root of $\pi_K=-p$ denoted $\xi_r=(-p)^{1/p^r-1}\in \overline{K}$. This choice of $\xi_r$ defines a character $\omega_{\xi_r}:I_K\rightarrow \oh_K^*$. By composing with our fixed embedding $\tau_0:\oh_K\hookrightarrow\oh_F$, we give rise to a \textit{fundamental character of niveau $r$}
\[\widetilde{\omega}_{\xi_r}:I_K\rightarrow\oh_F^*.\]
The residual character of $\widetilde{\omega}_{\xi_r}$, denoted $\omega_r$, is then a \textit{fundamental character of level $r$} such that the its action on any $g\in G_\infty$ is given by $\omega_r(g)=g(\xi_r)/\xi_r$. The following proposition does the job of computing the Hom-space described in Lemma \ref{red-in-first-piece-lem}. We follow the ideas found in the proof of \cite[Prop 5.1]{Liu21} which itself is a special case of \cite[Prop 3.1.2]{LHL19}.

\begin{prop}\label{compute-Hom-prop}
	Let $\overline{\Mf}=\prod_{i}\overline{\Mf}^{(i)}$ be a finite height Kisin module over $k_F[\![u]\!]$ with basis $\{e_1^{(i)},\dots,e_d^{(i)}\}_{i\in\Z/f\Z}$. Suppose that $\varphi^f(e_j^{(0)})=u^{a_j}e_{j+1}^{(0)}$ with $e_{d+1}^{(0)}=e_1^{(0)}$, then 
	\[\V_{k_F}^*(\overline{\Mf})|_{G_\infty}=\ind_{G_{\Q_{p^{df}}}}^{G_{\Q_{p^f}}}\omega_{df}^{\sum_{j=1}^{d}p^{d-j}a_j}.\]
	\begin{proof}	
		As discussed above, we may view $\overline{\Mf}$ as an \'{e}tale $\varphi$-module over $k_F[\![u]\!]$ and by Lemma \ref{red-in-first-piece-lem} we have reduced to computing 
		\[\V_{k_F}^*\left(\overline{\Mf}\right)\cong\Hom_{k_F[\![u]\!],\varphi^f}\left(\overline{\Mf}^{(0)},\oh_{\C_p}^\flat\right).\]
		Consider a general homomorphism $X=(x_1,\dots,x_d)\in \Hom_{k_F[\![u]\!],\varphi^f}(\overline{\Mf}^{(0)},\oh_{\C_p}^\flat)$. We may see that  $\varphi^f$-compatibility necessarily implies that $\varphi^f(x_j)=\pi^{a_j}x_{j+1}$ via the embedding $k_F[\![u]\!]\hookrightarrow \oh_{\C_p}^\flat$ sending $u\mapsto \pi$ where $\pi$ is a $p$-power compatible sequence in $\oh_{\C_p}^\flat$ starting at $\pi_K=-p$. It follows that 
		\[(\varphi^f)^d (x_j)=\pi^{\sum_{\ell=1}^{d}p^{d-\ell}a_\ell}(x_j)\]
		and hence, that $x_1$ is a solution to the equation
		\[x^{p^{df}}=\pi^{\sum_{j=1}^{d}p^{d-j}a_j}(x).\]
		Recall that $\xi_r=(-p)^{1/p^r-1}$ for any $r\ge 1$. Let us pick $\chi=(\chi_j)_{j\ge0}\in\oh_{\C_p}^\flat$ such that $\chi_0=\xi_{df}$ and $\chi^{p^{df}-1}=\pi$. By setting 
		\[x_1=\chi^{\sum_{j=1}^{d}p^{d-j}a_j}\]
		then $x_1$ is a solution to the above equation and all other roots may be attained by multiplication with a $p^{df}-1$-th root of unity in $\overline{\F}_p^*$. Hence, all other $x_j$ are determined by our choice of $x_1$ so that $X$ is determined by $x_1$. As a result, for any $g\in G_\infty=\Gal(K_\infty/K)$, the action of $g(X)$ is determined by the action of $g(x_1)$ so that if we set $H$ to be the splitting field of $Y^{p^{df}-1}+p$ in $\overline{\Q}_p$, then the $G_\infty$ action on $\V_{k_F}^*(\overline{\Mf})$ factors through $\Gal(H/K)$. 
		
		Since $\Gal(H/K)$ is finite, then we have a short exact sequence
		\[0\rightarrow I_{H/K}\rightarrow \Gal(H/K)\rightarrow \Gal(k_H/\Fpf)\rightarrow 0\]
		where $I_{H/K}$ denotes the inertia subgroup of $\Gal(H/K)$. As $H$ is totally ramified over $K$, then the Schur–Zassenhaus theorem \cite[Theorem 7.41]{Rot95} implies that $\Gal(H/K)$ is the semi-direct product $\Gal(H/K)\cong\Gal(k_H/\Fpf)\rtimes I_{H/K}$. Hence, upon restricting to inertia, we see that for any $g\in I_{H/K}$ we may write
		\[g(X)=\omega_{df}^{\sum_{j=0}^{d}p^{d-j}a_j}(g)X\]
		due to the fact that $g(\chi)=\omega_{df}(g)\chi$ by the definition of $\chi$ and the fundamental character.
		
		The proposition then follows after observing the following fact from \cite[Lemma 6.1]{Dou10}. Let $W=\omega_{df}^{\sum_{j=0}^{d}p^{d-j}a_j}$ be interpreted as the restriction of $\V_{k_F}^*(\overline{\Mf})$ to $I_{H/K}$. By the definition of $H$, we may view $W$ as a representation of $\Gal(\Q_{p^\infty}/\Q_{p^{df}})$ which has finite index $d$ in $G_\infty$. Since $\V_{k_F}^*(\overline{\Mf})$ is irreducible then Frobenius reciprocity will imply an isomorphism
		\[\V_{k_F}^*(\overline{\Mf})|_{G_\infty}\cong \ind_{G_{\Q_{p^{df}}}}^{G_{\Q_{p^f}}}\omega_{df}^{\sum_{j=0}^{d}p^{d-j}a_j}.\]
	\end{proof}
\end{prop}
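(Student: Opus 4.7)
The plan is to reduce via Lemma \ref{red-in-first-piece-lem} and then explicitly solve the Frobenius-semilinear equations imposed on a homomorphism. First I would use $\varepsilon_0$ to pass from the $\varphi$-module $\overline{\Mf}$ to the associated $\varphi^f$-module $\overline{\Mf}^{(0)}$ so that
\[\V_{k_F}^*(\overline{\Mf})\cong\Hom_{k_F[\![u]\!],\varphi^f}\left(\overline{\Mf}^{(0)},\oh_{\C_p}^\flat\right).\]
Next, I would embed $k_F[\![u]\!]\hookrightarrow\oh_{\C_p}^\flat$ by $u\mapsto\pi$ (via the fixed $p$-power compatible sequence with $\pi_0=-p$), which is compatible with both $\varphi$ and the $G_\infty$-action, and write an arbitrary $\varphi^f$-equivariant homomorphism in the basis as $X=(x_1,\dots,x_d)$ with $x_j\in\oh_{\C_p}^\flat$.

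The hypothesis $\varphi^f(e_j^{(0)})=u^{a_j}e_{j+1}^{(0)}$ forces the cyclic relations $\varphi^f(x_j)=\pi^{a_j}x_{j+1}$; iterating $d$ times produces
\[x_j^{p^{df}}=\pi^{\sum_{\ell=1}^{d}p^{d-\ell}a_\ell}x_j,\]
so each $x_j$ (and in particular $x_1$) satisfies the single Artin-Schreier-type equation $X^{p^{df}-1}=\pi^{N}$ with $N\coloneqq\sum_{j=1}^{d}p^{d-j}a_j$. I would construct an explicit solution by choosing $\chi=(\chi_n)\in\oh_{\C_p}^\flat$ with $\chi_0=\xi_{df}=(-p)^{1/(p^{df}-1)}$ satisfying $\chi^{p^{df}-1}=\pi$, and setting $x_1=\chi^{N}$; then all other solutions to the single equation differ from this by $(p^{df}-1)$-th roots of unity in $\overline{\F}_p^*$, and the remaining $x_j$ are uniquely determined from $x_1$ by the cyclic relations. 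Thus $\Hom_{k_F[\![u]\!],\varphi^f}(\overline{\Mf}^{(0)},\oh_{\C_p}^\flat)$ is a $d$-dimensional $k_F$-vector space identified with the $(p^{df}-1)$-th roots of unity times $\chi^N$.

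For the Galois action, I would observe that the $G_\infty$-action on $\V_{k_F}^*(\overline{\Mf})$ factors through $\Gal(H/K)$, where $H$ is the splitting field of $Y^{p^{df}-1}+p$ over $K$. Since $H/K$ is tamely totally ramified, the Schur--Zassenhaus theorem decomposes $\Gal(H/K)\cong\Gal(k_H/k)\rtimes I_{H/K}$. On the inertia subgroup, $g(\chi)=\omega_{df}(g)\chi$ by definition of the fundamental character of niveau $df$, and therefore $g(x_1)=\omega_{df}^{N}(g)\,x_1$, so the inertial action of $W\coloneqq\V_{k_F}^*(\overline{\Mf})$ on the line $k_F\cdot x_1$ is by $\omega_{df}^{N}$.

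Finally, I would interpret $\omega_{df}^{N}$ as a character of $G_{\Q_{p^{df}}}\subset G_{\Q_{p^f}}=G_K$ of index $d$. By Frobenius reciprocity and the irreducibility of $\V_{k_F}^*(\overline{\Mf})$ (together with the standard fact, cited in the excerpt as \cite[Lemma 6.1]{Dou10}, that the collection $\{x_1,\dots,x_d\}$ forms the orbit of $x_1$ under the quotient $\Gal(H/K)/\Gal(H/\Q_{p^{df}})$), I conclude
\[\V_{k_F}^*(\overline{\Mf})|_{G_\infty}\cong\ind_{G_{\Q_{p^{df}}}}^{G_{\Q_{p^f}}}\omega_{df}^{N}.\]
The main obstacle is checking carefully that the eigenline structure under inertia really does induce to the full representation (rather than merely restrict to it); this is where the Frobenius-reciprocity/irreducibility input is essential, and where one needs the cyclic structure of the $\varphi^f$-module to ensure that the permutation of the $x_j$ by the residual Frobenius matches precisely the induction datum.
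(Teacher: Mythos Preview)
Your proposal is correct and follows essentially the same approach as the paper's proof: reduce via Lemma~\ref{red-in-first-piece-lem}, solve the cyclic system $\varphi^f(x_j)=\pi^{a_j}x_{j+1}$ explicitly using $\chi$ with $\chi^{p^{df}-1}=\pi$, identify the inertial action via the fundamental character $\omega_{df}$, and conclude by Frobenius reciprocity citing \cite[Lemma~6.1]{Dou10}. Your phrasing that inertia acts on the line $k_F\cdot x_1$ by $\omega_{df}^N$ is in fact slightly more careful than the paper's, since the other $x_j$ lie in the Frobenius-conjugate eigenspaces $\omega_{df}^{p^{jf}N}$, which is precisely what makes the induction identification work.
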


One complication may stand out to the reader at this point. That is, how are we to compute the product 
\[\prod_{i\in\Z/f\Z}\Af^{(i)}=\overline{\Af}^{(0)}\varphi\left(\overline{\Af}^{(1)}\right)\varphi^2\left(\overline{\Af}^{(2)}\right)\cdots\varphi^{f-1}\left(\overline{\Af}^{(f-1)}\right)\] 
in an explicit way so as to use Proposition \ref{compute-Hom-prop} when $f$ is allowed to be arbitrarily large? Indeed, this is impractical but let us introduce some notation to recover the situation somewhat. 

Let $I=\ident_2$ denote the identity matrix and let $S=\begin{psmallmatrix}
	0 & 1 \\ 1 & 0
\end{psmallmatrix}$. For a pair $\lambda_i=(n_i,m_i)\in\Z^2$, we will set $u^{\lambda_i}=\Diag(u^{n_i},u^{m_i})$. In the coming sections, we will describe a $k_F[\![u]\!]$-basis such that each $\overline{\Af}^{(i)}$ is either $Iu^{\lambda_i}$ or $Su^{\lambda_i}$ up to scalars in $k_F$. We will then summarize this data in the form of a map $\overline{\Mf(\Af)}\mapsto \mu=(\mu_i)_{i\in\Z/f\Z}$ where each $\mu_i=(M_i,\lambda_i)$ with $M_i\in\{I,S\}$ and $\lambda_i$ is as defined above. Let us also decompose $i\in\Z/f\Z$ into two disjoint subsets $\Z/f\Z=\Is\sqcup\Ps$ where $\Is$ denotes the set of $i$ such that $A_i=I$ and $\Ps$ denotes the set of $i$ with $A_i=S$. In practice, $A_i$ tells us if the partial Frobenius $\overline{\Af}^{(i)}$ is reducible or irreducible and $\lambda_i$ encodes its elementary divisors in $k_F[\![u]\!]$. 

\begin{defin}\label{red-data-def}
	We will call the $f$-tuple of pairs $\mu=(\mu_i)=(M_i,\lambda_i)_i$ derived from $\overline{\Mf(\Af)}$ to be its \textit{reduction data}.
\end{defin}

\noindent As the name suggests, this data suffices to compute the reduction $\overline{V(A)}|_{G_\infty}$ explicitly given specific data, up to an unramified twist in the irreducible case or restriction to inertia in the reducible case. The following Proposition lays out an algorithm to compute the reduction given such data.

\begin{prop}\label{compute-hom-with-data-prop}
	Using the notation from above, let $\overline{\Mf(\Af)}$ be a finite height, rank-two Kisin module with reduction data $\mu=(\mu_i)$ canonically associated to $T\subset V(A)$. Then there exists $v_i,w_i\in\Z$ such that one of two cases hold:
	\begin{enumerate}
		\item If the order of the set $|\Ps|$ for $\mu$ is even or zero, then we have a reducible reduction
		\[\overline{V(A)}|_{I_K}=\omega_f^{\sum_{j=0}^{f-1} p^{j}v_j}\oplus\omega_f^{\sum_{j=0}^{f-1} p^{j}w_j};\]
		\item If the order of the set $|\Ps|$ is odd, set $t=p\sum_{j=0}^{f-1} p^{j}w_j+\sum_{j=0}^{f-1} p^{j}v_j$. 
		\begin{enumerate}
			\item[(i)] When $p^f-1\nmid t$ then we have an irreducible reduction up to unramified twist
			\[\overline{V(A)}|_{G_\infty}=\ind^{G_{\Qpf}}_{G_{\Q_{p^{2f}}}}\left(\omega_{2f}^{t}\right).\]
			
			\item[(ii)] When $p^f-1\mid t$ then we have a reducible reduction up to restriction to inertia
			\[\overline{V(A)}|_{I_K}=\omega_f^{\frac{t}{p^f-1}}\oplus\omega_f^{\frac{t}{p^f-1}}.\]
		\end{enumerate}
	\end{enumerate}
	\begin{proof}
		We will construct the required $v_i$ and $w_i$ from the powers of the elementary divisors $n_i$ and $m_i$ of the reduction data $\mu$. A sum of these elements will then form the powers of $u$ in the product $\prod\overline{\Af}^{(i)}$ so that we may apply Proposition \ref{compute-Hom-prop} to get a reduction. Since $\Ps$ denotes the set of $i\in\Z/f\Z$ associated to anti-diagonal matrices, then it is easy to see that the order $|\Ps|$ being odd will result in an irreducible product and an even order will result in a reducible product.
		
		Let us first suppose that the set $\Ps=\emptyset$ so that the reduction data is given by $\mu_i=(I,\lambda_i)$ for all $i\in\Z/f\Z$. Hence, the product is easily seen to be
		\[\prod_{i\in\Z/f\Z}\overline{\Af}^{(i)}=\begin{pmatrix}
			u^{\sum_{j=0}^{f-1} p^jn_j} & 0 \\ 0 & u^{\sum_{j=0}^{f-1} p^jm_j}
		\end{pmatrix}.\]
		We may then set $v_i=n_i$ and $w_i=m_i$ for all $i\in\Z/f\Z$ and apply Proposition \ref{compute-Hom-prop} to get a reduction in the form of $(a)$.
		
		Now let us tackle the more complicated case of $\Ps\neq\emptyset$. We may enumerate the set $\Ps$ by $\Ps=\{r_0,r_1,\dots, r_d\}$ where $r_{j-1}<r_j$ and $0\le d\le f-1$. For $\ell\ge0$, let us assign:
		\begin{itemize}
			\item For $0\le i<r_0$ and $r_0\neq 0$, set $v_i=n_i$ and $w_i=m_i$;
			\item For $r_{0}\le i<r_{1}$, set $v_i=m_i$ and $w_i=n_i$;
			\item Continue this process in that for $\ell$ odd we set $v_i=n_i$ and $w_i=m_i$ for $r_{\ell}\le i<r_{\ell+1}$ and $v_i=m_i$ and $w_i=n_i$ for $r_{\ell+1}\le i<r_{\ell+2}$. 
			\item When we reach $r_d$, repeat the pattern for $r_d\le i<f-1$.
		\end{itemize}
		
		\noindent The properties of matrix multiplication between diagonal and anti-diagonal matrices means that we will have one of two cases depending on the order $|\Ps|$. Namely, if $|\Ps|$ is even then $d$ is odd and the product will be the reducible matrix
		\[\prod_{i\in\Z/f\Z}\overline{\Af}^{(i)}=\begin{pmatrix}
			u^{\sum_{j=0}^{f-1} p^jv_j} & 0 \\ 0 & u^{\sum_{j=0}^{f-1} p^jw_j}
		\end{pmatrix}.\]
		We may apply Proposition \ref{compute-Hom-prop} to get a reduction in the form of $(a)$. Finally, if $|\Ps|$ is odd then $d$ is even and the product will be an irreducible matrix in the form
		 		\[\prod_{i\in\Z/f\Z}\overline{\Af}^{(i)}=\begin{pmatrix}0 &
		 	u^{\sum_{j=0}^{f-1} p^jv_j} \\ u^{\sum_{j=0}^{f-1} p^jw_j} & 0
		 \end{pmatrix}.\]
		We may then apply Proposition \ref{compute-Hom-prop} to get a reduction in the form of $(b)$ after recalling that $\omega_{2f}^{p^f-1}=\omega_f$.
	\end{proof}
\end{prop}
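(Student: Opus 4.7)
The plan is to directly compute the product $\overline{\Af}^{(0)}\varphi(\overline{\Af}^{(1)})\cdots\varphi^{f-1}(\overline{\Af}^{(f-1)})$ governing the $\varphi^f$-action on $\overline{\Mf(\Af)}^{(0)}$, and then apply Proposition \ref{compute-Hom-prop}. Since $\varphi(u)=u^p$, applying $\varphi^i$ to $\overline{\Af}^{(i)}=M_iu^{\lambda_i}$ (suppressing the $k_F^\times$-scalars, which contribute only an unramified twist and so vanish after restriction to $I_K$ in case (a) or are absorbed into the induction in case (b)) produces $M_iu^{p^i\lambda_i}$, so the entire computation reduces to multiplying matrices of the form $Iu^\lambda$ or $Su^\lambda$.

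First I would record the four multiplication rules, which a direct check gives:
\begin{align*}
Iu^{(a,b)}\cdot Iu^{(c,d)}&=Iu^{(a+c,b+d)}, & Iu^{(a,b)}\cdot Su^{(c,d)}&=Su^{(b+c,a+d)},\\
Su^{(a,b)}\cdot Iu^{(c,d)}&=Su^{(a+c,b+d)}, & Su^{(a,b)}\cdot Su^{(c,d)}&=Iu^{(b+c,a+d)}.
\end{align*}
Two consequences are immediate: the shape ($I$ vs.\ $S$) of the running product toggles precisely when an $S$-factor is appended on the right, so the terminal product is diagonal iff $|\Ps|$ is even; and appending an $S$-factor permutes the top and bottom slots of the accumulated exponent pair, so the contribution of $p^in_i$ versus $p^im_i$ to each slot is governed by the parity of the number of $S$-factors at positions strictly greater than $i$. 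Enumerating $\Ps=\{r_0<\cdots<r_d\}$ and alternating the assignment $(v_i,w_i)\in\{(n_i,m_i),(m_i,n_i)\}$ at each threshold $r_\ell$ is exactly the bookkeeping that captures this swap pattern.

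A straightforward induction on $k$ tracking the shape and exponent pair of the partial product $P_k=\prod_{i=0}^{k}\varphi^i(\overline{\Af}^{(i)})$ then shows that the final matrix has the form $Iu^{(V,W)}$ when $|\Ps|$ is even and $Su^{(V,W)}$ when $|\Ps|$ is odd, with $V=\sum_{j=0}^{f-1}p^jv_j$ and $W=\sum_{j=0}^{f-1}p^jw_j$. The hard part here is purely notational — carefully matching the enumeration of $\Ps$ with the parity of swaps, especially when the last threshold occurs partway through the index range — but no deep argument is needed. Case (a) then follows directly from Proposition \ref{compute-Hom-prop} applied to the resulting diagonal matrix, upon restriction to $I_K$.

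For case (b), Proposition \ref{compute-Hom-prop} yields $\overline{V(A)}|_{G_\infty}\cong\ind_{G_{\Q_{p^{2f}}}}^{G_{\Qpf}}\omega_{2f}^t$ with $t=pW+V$. When $p^f-1\nmid t$, the character $\omega_{2f}^t$ does not extend from $G_{\Q_{p^{2f}}}$ to $G_{\Qpf}$, hence the induction is irreducible, giving (b)(i). When $p^f-1\mid t$, the identity $\omega_{2f}^{p^f-1}=\omega_f$ on $I_K$ gives $\omega_{2f}^t=\omega_f^{t/(p^f-1)}$ on inertia; this character already extends from $G_{\Q_{p^{2f}}}$ to $G_{\Qpf}$, so the induction decomposes as a direct sum of two extensions differing by an unramified twist, both of which restrict to $\omega_f^{t/(p^f-1)}$ on $I_K$, giving (b)(ii).
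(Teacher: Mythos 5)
Your proposal is correct and follows essentially the same route as the paper's proof: both reduce the computation to multiplying matrices of the form $Iu^{\lambda_i}$ and $Su^{\lambda_i}$, observe that the shape of the product is governed by the parity of $|\Ps|$ while the slot receiving each exponent $p^i n_i$ versus $p^i m_i$ is governed by the swaps caused by the anti-diagonal factors (exactly the alternating assignment along the enumeration of $\Ps$), and then invoke Proposition \ref{compute-Hom-prop}. Your explicit multiplication table and the extension/non-extension discussion in case (b) simply spell out steps the paper leaves implicit.
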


\subsection{Reductions for Large Valuations}\label{red-large-val-sec}

Let $\Ms(\As)$ be a Kisin module over $S_F$ as constructed in Section \ref{Explicit-Kisin-mod-chap}. The results of Proposition \ref{dec-assump-large-val-prop} combined with Theorem \ref{descent-thm} allow use to identify a descent of $\Ms(\As)$ to $\Sig_F$ denoted $\Mf(\Af)$ so long as 
\[\nu_p(a_2^{(i)})>\max\left\{\Bigg\lfloor\frac{k_i-1}{p-2}\Bigg\rfloor,\Bigg\lfloor\frac{k_{max}-1}{p-2}\Bigg\rfloor-\Bigg\lfloor\frac{k_i-1}{p-2}\Bigg\rfloor-1\right\}.\]
The reduction of which $\overline{\Mf(\Af)}$ necessarily has partial Frobenius matrices given by
\begin{align*}
	\overline{\Af}^{(i)}&=\begin{dcases}
			\begin{pmatrix}
				0 & u^{k_{i}}\overline{a}_1^{(i)} \\ 1 & 0
			\end{pmatrix} & \text{if	} i\in\Ss\\
			\begin{pmatrix}
				u^{k_{i}}\overline{a}_1^{(i)} & 0 \\ 0 & 1
			\end{pmatrix}& \text{if	} i\in\Ts 
		\end{dcases}
\end{align*}

\noindent due to the fact that $E=u+p$. Hence, the Kisin module $\overline{\Mf(\Af)}$ over $k_F\pu$ will have reduction data given by 
\begin{align*}
	\mu_i&=\begin{dcases}
	(S, (0, k_i)) & \text{if	} i\in\Ss \\
	(I, (k_i, 0)) & \text{if	} i\in\Ts.
\end{dcases}
\end{align*}
By the results of Section \ref{mod-rep-sec}, we may use Proposition \ref{compute-hom-with-data-prop}, where $v_i,w_i\in\{k_i,0\}$, to explicitly compute reductions of $V(A)$ under the aforementioned restrictions on $\nu_p(a_2^{(i)})$. With this, we arrive at our main result.

\begin{thm}\label{large-val-red-thm}
	Let $V\in\Rep_{\cris/F}^{\kvec}(G_K)$ be an irreducible, two dimensional crystalline representation of $G_K$ with labeled Hodge-Tate weights $\{k_i,0\}_{i\in\Z/f\Z}$ where $k_i>0$. Then $V\cong V(A)$ and if $(A^{(i)})_{i\in\Z/f\Z}$ is such that 
	\[\nu_p(a_2^{(i)})>\max\left\{\Bigg\lfloor\frac{k_i-1}{p-2}\Bigg\rfloor,\Bigg\lfloor\frac{k_{max}-1}{p-2}\Bigg\rfloor-\Bigg\lfloor\frac{k_i-1}{p-2}\Bigg\rfloor-1\right\},\]
	then there exists $v_i,w_i\in\{k_i,0\}$ such that:
	\begin{enumerate}
		\item If $|\Ss|$ is even, then 
		\[\overline{V}|_{I_K}=\omega_f^{\sum_{j=0}^{f-1} p^{j}v_j}\oplus\omega_f^{\sum_{j=0}^{f-1} p^{j}w_j};\]
		\item If $|\Ss|$ is odd, set $t=p\sum_{j=0}^{f-1} p^{j}w_j+\sum_{j=0}^{f-1} p^{j}v_j$, 
		\begin{enumerate}
			\item[(i)] When $p^f-1\nmid t$ then 
			\[\overline{V}|_{G_\infty}=\ind^{G_{\Qpf}}_{G_{\Q_{p^{2f}}}}\left(\omega_{2f}^{t}\right);\]
			
			\item[(ii)] When $p^f-1\mid t$ then 
			\[\overline{V}|_{I_K}=\omega_f^{\frac{t}{p^f-1}}\oplus\omega_f^{\frac{t}{p^f-1}}.\]
		\end{enumerate}
	\end{enumerate}
	\begin{proof}
		The fact that $V\cong V(A)$ for some $(A^{(i)})$ comes from Theorem \ref{model-irred-cris-reps-thm}. Proposition \ref{dec-assump-large-val-prop} provides the necessary bound on $a_2^{(i)}$ with which the descent $\Mf(\Af)$ described above exists. The theorem then follows by applying Proposition \ref{compute-hom-with-data-prop} to the descent data $\mu_i$ of $\overline{\Mf(\Af)}$ as described previously.
	\end{proof}
\end{thm}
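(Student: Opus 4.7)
The plan is to assemble the results established over the course of Sections \ref{prelim-chap}--\ref{comp-red-chap} into a single chain of identifications. First, since $V$ is an irreducible two-dimensional crystalline $F$-representation with labeled Hodge-Tate weights $\{k_i,0\}$ and each $k_i>0$, Theorem \ref{model-irred-cris-reps-thm} immediately produces an $f$-tuple $(A^{(i)}) \in \GL_2(\oh_F)^f$, with each $A^{(i)}$ of Type $\I$ or Type $\II$, such that $V \cong V(A)$. From the associated weakly admissible filtered $\varphi$-module $D(A) = \D_\cris^*(V(A))$, the explicit algorithm of Proposition \ref{construct-kis-mod-prop}, combined with the determinant-adjusting change of basis of Proposition \ref{det-base-change-existance-prop}, yields a rank-two Kisin module $\Ms(\As)$ over $S_F$ whose partial Frobenius matrices are recorded in the two Types displayed at the start of Section \ref{prep-comp-chap}.

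Next, I would verify that the hypotheses of the descent machinery apply. The given lower bound on $\nu_p(a_2^{(i)})$ is exactly the bound of Proposition \ref{dec-assump-large-val-prop}, rewritten in terms of $\lfloor(k_i-1)/(p-2)\rfloor$ and $\lfloor(k_{max}-1)/(p-2)\rfloor$. That proposition thus supplies the Descent Assumptions \ref{descent-assump} for $c = c_{max}$, with explicit integral targets $\Af_0^{(i)}$ whose only nontrivial non-constant entries are $E^{k_i}a_1^{(i)}$ and the finite $\Sig_F$-truncation $\sum_{j=0}^{c^{(i)}-1} a_2^{(i)}\alpha_j^{(i)} E^{jp}/p^j$. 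Theorem \ref{descent-thm} then delivers a descent $\Mf(\Af)$ of $\Ms(\As)$ over $\Sig_F$ satisfying $\Af^{(i)} \equiv \Af_0^{(i)} \pmod{\varpi}$, and Proposition \ref{canon-assoc-prop} ensures that $\Mf(\Af)$ is canonically associated to a $G_\infty$-stable $\oh_F$-lattice $T \subset V(A)$. Proposition \ref{det-red-prop} therefore identifies $\overline{V(A)}|_{G_\infty}$ with $\V_{k_F}^*(\overline{\Mf(\Af)}[1/u])^\semi$.

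The last step is to read off the reduction data. Reducing $\Af_0^{(i)}$ modulo $\varpi$ and using $E \equiv u \pmod{\varpi}$ annihilates the entire sub-leading tail, so in the sense of Definition \ref{red-data-def} we obtain $\mu_i = (S,(0,k_i))$ for $i \in \Ss$ and $\mu_i = (I,(k_i,0))$ for $i \in \Ts$. Since the set $\Ps$ of anti-diagonal indices of this $\mu$ coincides with $\Ss$, the parity dichotomy of Proposition \ref{compute-hom-with-data-prop} becomes the parity condition on $|\Ss|$, and the two explicit formulas stated in the theorem follow directly from parts (a) and (b) of that proposition, with $v_i,w_i \in \{k_i, 0\}$ as dictated by the alternation rule in its proof.

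The genuine content of this argument is concentrated in Proposition \ref{dec-assump-large-val-prop}, not in the assembly. The main obstacle is to arrange for the single lower-triangular change of basis with off-diagonal entry $x^{(i)} = -(a_2/a_1)^{(i)} \sum_{j \ge c^{(i)}} \alpha_j^{(i)} E^{jp-k_i}/p^j$ to push all non-integral contributions into the ideal $I_{c_{max}}$ while simultaneously forcing the retained initial truncation of $a_2^{(i)}\lambda_b^{g^{(i)}(\varphi)}$ into $\varpi\Sig_F$. Lemma \ref{phi-admis-lem} handles the $\varphi$-conjugated tail provided $\nu_p(a_2^{(i)}) > c_{max} - c^{(i)} - 1$, while the kept truncation lies in $\varpi\Sig_F$ provided $\nu_p(a_2^{(i)}) > c^{(i)} - 1$. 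The maximum of these two thresholds is exactly the hypothesis of the theorem.
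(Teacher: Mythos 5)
Your proposal is correct and follows essentially the same route as the paper: Theorem \ref{model-irred-cris-reps-thm} for $V\cong V(A)$, the explicit Kisin module of Propositions \ref{construct-kis-mod-prop} and \ref{det-base-change-existance-prop}, the bound of Proposition \ref{dec-assump-large-val-prop} feeding the descent machinery (Theorem \ref{descent-thm}, Propositions \ref{canon-assoc-prop} and \ref{det-red-prop}), and finally Proposition \ref{compute-hom-with-data-prop} applied to the reduction data $\mu_i=(S,(0,k_i))$ for $i\in\Ss$ and $(I,(k_i,0))$ for $i\in\Ts$. You correctly identify that the real content sits in Proposition \ref{dec-assump-large-val-prop} (the two valuation thresholds from Lemma \ref{phi-admis-lem} and the $\varpi\Sig_F$-membership of the retained truncation), and your assembly matches the paper's, only written out in more detail.
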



\printbibliography[title = {References}]


\end{document}